\documentclass[a4paper,11pt,leqno]{smfart}

\usepackage{amssymb,amsmath,enumerate,verbatim,mathrsfs,graphics,graphicx,mathptm,float,mathtools}
\usepackage[T1]{fontenc}
\usepackage[english, francais]{babel}
\usepackage[all]{xy}
\usepackage[pdfstartview=FitH,
            colorlinks=true,
            linkcolor=blue,
            urlcolor=blue,
            citecolor=red,
            bookmarks,
            bookmarksopen=true,
            bookmarksnumbered=true]{hyperref}

\usepackage{cleveref}
\setcounter{tocdepth}{2}

\theoremstyle{plain}

\newtheorem{theoalph}{Theorem}

\newtheorem{thmalph}[theoalph]{Theorem}

\newtheorem{coralph}[theoalph]{Corollary}

\theoremstyle{definition}

\theoremstyle{remark}

\theoremstyle{plain}
\newtheorem{thmsec}{Theorem}[section]

\newtheorem{pro}[thmsec]{Proposition}
\newtheorem{lem}[thmsec]{Lemma}
\newtheorem{cor}[thmsec]{Corollary}
\newtheorem{prob}{Problem}

\theoremstyle{definition}
\newtheorem{defin}[thmsec]{Definition}

\theoremstyle{remark}
\newtheorem{rem}[thmsec]{Remark}
\newtheorem{rems}[thmsec]{Remarks}
\newtheorem{eg}[thmsec]{Example}

\def\og{\leavevmode\raise.3ex\hbox{$\scriptscriptstyle\langle\!\langle$~}}
\def\fg{\leavevmode\raise.3ex\hbox{~$\!\scriptscriptstyle\,\rangle\!\rangle$}}

\setlength{\textwidth}{16.7cm} \setlength{\textheight}{21cm}
\setlength{\topmargin}{0cm} \setlength{\headheight}{0.59cm}
\setlength{\headsep}{1.5cm} \setlength{\oddsidemargin}{-0.4cm}
\setlength{\evensidemargin}{-0.4cm} \marginparwidth 1.9cm
\marginparsep 0.4cm \marginparpush 0.4cm \footskip 2.2cm
\setlength{\baselineskip}{0.8cm}

\addtocounter{section}{0}             
\numberwithin{equation}{section}       


\newcommand{\Z}{\mathbb{Z}}

\newcommand{\C}{\mathbb{C}}

\newcommand{\pp}{\mathbb{P}^{2}_{\mathbb{C}}}
\newcommand{\pd}{\mathbb{\check{P}}^{2}_{\mathbb{C}}}

\newcommand\X{\mathrm{X}}

\newcommand\Sing{\mathrm{Sing}}
\newcommand\Tang{\mathrm{Tang}}
\newcommand\Leg{\mathrm{Leg}}
\newcommand\IF{\mathrm{I}_{\mathcal{F}}}
\newcommand\IinvF{\mathrm{I}_{\mathcal{F}}^{\mathrm{inv}}}
\newcommand\ItrF{\mathrm{I}_{\mathcal{F}}^{\hspace{0.2mm}\mathrm{tr}}}

\newcommand\F{\mathcal{F}}
\newcommand\W{\mathcal{W}}

\newcommand\omegaoverline{{\mspace{2mu}\overline{\mspace{-1.4mu}\omega\mspace{-1.4mu}}\mspace{2mu}}}
\newcommand\Omegaoverline{{\mspace{2mu}\overline{\mspace{-1.4mu}\Omega\mspace{-1.4mu}}\mspace{2mu}}}
\newcommand\thetaoverline{{\mspace{2mu}\overline{\mspace{-1.4mu}\theta\mspace{-1.4mu}}\mspace{2mu}}}

\newcommand*{\transp}[2][-3mu]{\ensuremath{\mskip1mu\prescript{\smash{\mathrm t\mkern#1}}{}{\mathstrut#2}}}


\begin{document}
\selectlanguage{english}

\title[Classification of foliations of degree three on $\pp$ with a flat Legendre transform]{Classification of foliations of degree three on $\pp$ with a flat Legendre transform}
\date{\today}

\author{Samir \textsc{Bedrouni}}

\address{Facult\'e de Math\'ematiques, USTHB, BP $32$, El-Alia, $16111$ Bab-Ezzouar, Alger, Alg\'erie}
\email{sbedrouni@usthb.dz}

\author{David \textsc{Mar\'{\i}n}}

\thanks{D. Mar\'{\i}n acknowledges financial support from the Spanish Ministry of Economy and Competitiveness, through grant MTM2015-66165-P and the "Mar\'{\i}a de Maeztu" Programme for Units of Excellence in R\&D (MDM-2014-0445).}

\address{Departament de Matem\`{a}tiques Universitat Aut\`{o}noma de Barcelona E-08193  Bellaterra (Barcelona) Spain}

\email{davidmp@mat.uab.es}




\keywords{web, flatness, \textsc{Legendre} transformation, homogeneous foliation}

\maketitle{}

\begin{abstract}
The set $\mathbf{F}(3)$ of foliations of degree three on the complex projective plane can be identified with a Zariski's open set of a projective space of dimension $23$ on which acts $\mathrm{Aut}(\mathbb{P}^{2}_{\mathbb{C}})$. The subset $\mathbf{FP}(3)$ of $\mathbf{F}(3)$ consisting of foliations of $\mathbf{F}(3)$ with a flat Legendre transform (dual web) is a Zariski closed subset of $\mathbf{F}(3)$. We classify up to automorphism of $\mathbb{P}^{2}_{\mathbb{C}}$ the elements of $\mathbf{FP}(3)$. More precisely, we show that up to automorphism there are $16$ foliations of degree three with a flat Legendre transform. From this classification we deduce that $\mathbf{FP}(3)$ has exactly~$12$ irreducible components. We also deduce that up to automorphism there are $4$ convex foliations of degree three on~$\mathbb{P}^2.$
\noindent{\it 2010~Mathematics Subject Classification. --- 14C21, 32S65, 53A60.}
\end{abstract}

\section*{Introduction}
\bigskip

\noindent  A (regular) $d$-web on $(\mathbb{C}^2,0)$ is the data of a family $\{\F_1,\F_2,\ldots,\F_d\}$ of regular holomorphic foliations on $(\mathbb{C}^2,0)$ which are pairwise transverse at the origin. The first significant result in the study of webs was obtained by W. \textsc{Blaschke} and J. \textsc{Dubourdieu} around $1920$. 
They have introduced, for any regular $3$-web $\mathcal{W}$ on $(\mathbb{C}^2,0),$ a differential $2$-form $K(\mathcal{W})$ known as the \textsc{Blaschke} curvature of $\mathcal{W}$, whose vanishing implies (\cite{BD28}) that $\mathcal{W}$ is analytically equivalent to the trivial $3$-web defined by $\mathrm{d}x.\mathrm{d}y.\mathrm{d}(x+y)$.
The curvature of a $d$-web~$\W$ with $d>3$ is defined as the sum of the \textsc{Blaschke} curvatures of all $3$-subwebs of $\W$. A web with zero curvature is called flat. This notion is useful for the classification of maximal rank webs. Indeed, a result of N. \textsc{Mih\u{a}ileanu} shows that the flatness is a necessary condition for the maximality of the rank, \emph{see} for instance  \cite{Hen06,Rip07}.

\noindent Recently, the study of global holomorphic webs defined on complex surfaces has been updated, \emph{see} for instance \cite{CL07,PP09,MP13}. In the sequel we will focus on webs of the complex projective plane. A (global) $d$-web on $\pp$ is given in an affine chart $(x,y)$ by an implicit differential equation $F(x,y,y')=0$, where $F(x,y,p)=\sum_{i=0}^{d}a_{i}(x,y)p^{d-i}\in\mathbb{C}[x,y,p]$ is a reduced polynomial whose coefficient $a_0$ is not identically zero. In a neighborhood of a point $z_0=(x_0,y_0)$ such that  $a_{0}(x_{0},y_{0})\Delta(x_{0},y_{0})\neq 0$, being $\Delta(x,y)$ the $p$-discriminant of $F$, the integral curves of this equation define a regular $d$-web on $(\C^2,z_0)$.

\noindent The curvature of a web $\W$ on $\pp$ is a meromorphic $2$-form with poles along the discriminant $\Delta(\W)$, \emph{see} \S\ref{subsec:courbure-platitude}.

\noindent D.~\textsc{Mar\'{\i}n} and J.V. \textsc{Pereira} have shown, in \cite{MP13}, how to associate to every degree $d$ foliation $\F$ on $\pp$, a global $d$-web on the dual projective plane $\pd$, called {\sl \textsc{Legendre} transform} of $\F$ and denoted by $\Leg\F$. The leaves of $\Leg\F$ are the dual curves of the leaves of $\F$,  \emph{see}  \S\ref{subsec-tissus}.

\noindent The set  $\mathbf{F}(d)$ of degree $d$ foliation on $\pp$ can be naturally identified with a \textsc{Zariski} open subset of the projective space  $\mathbb{P}_{\C}^{(d+2)^{2}-2}$. The automorphism group of $\pp$ acts on $\mathbf{F}(d)$; the orbit of an element $\F\in\mathbf{F}(d)$ under the action of $\mathrm{Aut}(\pp)=\mathrm{PGL}_3(\mathbb{C})$ will be denoted by  $\mathcal{O}(\F)$. The subset $\mathbf{FP}(d)$ of $\mathbf{F}(d)$ consisting of $\F\in\mathbf{F}(d)$ such that $\Leg\F$ is flat is \textsc{Zariski} closed in $\mathbf{F}(d)$ and saturated by the action of $\mathrm{Aut}(\pp)$.

\noindent In \cite{MP13} the authors pose a problem concerning the geometry of webs on $\pp$ which, in the framework of the foliations on $\pp,$ consists in the description of certain irreducible components of $\mathbf{FP}(d)$. The first nontrivial case that we encounter is the one where $d=3.$ In this paper we describe the decomposition of $\mathbf{FP}(3)$ into its irreducible components. In order to do this, we begin by establishing the classification, up to isomorphism, of the foliations of $\mathbf{FP}(3).$ In a previous work \cite{BM17}, we have shown (\cite[Theorem~5.1]{BM17}) that up to isomorphism there are eleven \textsl{homogeneous} foliations ({\it i.e.} invariant by homotheties) of degree $3$, denoted $\mathcal{H}_{1},\ldots,\mathcal{H}_{11}$, with a flat \textsc{Legendre} transform. On the other hand, we have also proved (\cite[Theorem~6.1]{BM17}) that if a foliation of $\mathbf{FP}(3)$ has only non-degenerate singularities ({\it i.e.} singularities with \textsc{Milnor} number $1$), then it is linearly conjugated to the \textsc{Fermat} foliation $\F_3$ defined by the $1$-form  $(x^{3}-x)\mathrm{d}y-(y^{3}-y)\mathrm{d}x.$ In \S\ref{sec:cas-dégénéré} by studying the flatness of the dual web of a foliation $\F\in\mathbf{F}(3)$ having at least one degenerate singularity, we obtain the classification, up to automorphism of $\pp$, of the elements of $\mathbf{FP}(3).$
\begin{thmalph}\label{thmalph:classification}
{\sl Up to automorphism of $\pp$ there are sixteen foliations of degree three $\mathcal{H}_1,\ldots,\mathcal{H}_{11},\mathcal{F}_1,\ldots,\mathcal{F}_{5}$ on the complex projective plane having a flat \textsc{Legendre} transform. They are respectively described in affine chart by the following $1$-forms
\begin{itemize}
\item [\texttt{1. }]  \hspace{1mm} $\omega_1\hspace{1mm}=y^3\mathrm{d}x-x^3\mathrm{d}y$;
\item [\texttt{2. }]  \hspace{1mm} $\omega_2\hspace{1mm}=x^3\mathrm{d}x-y^3\mathrm{d}y$;
\item [\texttt{3. }]  \hspace{1mm} $\omega_3\hspace{1mm}=y^2(3x+y)\mathrm{d}x-x^2(x+3y)\mathrm{d}y$;
\item [\texttt{4. }]\hspace{1mm}   $\omega_4\hspace{1mm}=y^2(3x+y)\mathrm{d}x+x^2(x+3y)\mathrm{d}y$;
\item [\texttt{5. }]\hspace{1mm} $\omega_{5}\hspace{1mm}=2y^3\mathrm{d}x+x^2(3y-2x)\mathrm{d}y$;
\item [\texttt{6. }]\hspace{1mm} $\omega_{6}\hspace{1mm}=(4x^3-6x^2y+4y^3)\mathrm{d}x+x^2(3y-2x)\mathrm{d}y$;
\item [\texttt{7. }]\hspace{1mm} $\omega_{7}\hspace{1mm}=y^3\mathrm{d}x+x(3y^2-x^2)\mathrm{d}y$;
\item [\texttt{8. }]\hspace{1mm} $\omega_{8}\hspace{1mm}=x(x^2-3y^2)\mathrm{d}x-4y^3\mathrm{d}y$;
\item [\texttt{9. }]\hspace{1mm} $\omega_{9}\hspace{1mm}=y^{2}\left((-3+\mathrm{i}\sqrt{3})x+2y\right)\mathrm{d}x+
                                                       x^{2}\left((1+\mathrm{i}\sqrt{3})x-2\mathrm{i}\sqrt{3}y\right)\mathrm{d}y$;
\item [\texttt{10. }]\hspace{-1mm} $\omega_{10}=(3x+\sqrt{3}y)y^2\mathrm{d}x+(3y-\sqrt{3}x)x^2\mathrm{d}y$;
\item [\texttt{11. }]\hspace{-1mm} $\omega_{11}=(3x^3+3\sqrt{3}x^2y+3xy^2+\sqrt{3}y^3)\mathrm{d}x+(\sqrt{3}x^3+3x^2y+3\sqrt{3}xy^2+3y^3)\mathrm{d}y$;
\item [\texttt{12. }]\hspace{-1mm} $\omegaoverline_{1}\hspace{1mm}=y^{3}\mathrm{d}x+x^{3}(x\mathrm{d}y-y\mathrm{d}x)$;
\item [\texttt{13. }]\hspace{-1mm} $\omegaoverline_{2}\hspace{1mm}=x^{3}\mathrm{d}x+y^{3}(x\mathrm{d}y-y\mathrm{d}x)$;
\item [\texttt{14. }]\hspace{-1mm} $\omegaoverline_{3}\hspace{1mm}=(x^{3}-x)\mathrm{d}y-(y^{3}-y)\mathrm{d}x$;
\item [\texttt{15. }]\hspace{-1mm} $\omegaoverline_{4}\hspace{1mm}=(x^{3}+y^{3})\mathrm{d}x+x^{3}(x\mathrm{d}y-y\mathrm{d}x)$;
\item [\texttt{16. }]\hspace{-1mm} $\omegaoverline_{5}\hspace{1mm}=y^{2}(y\mathrm{d}x+2x\mathrm{d}y)+x^{3}(x\mathrm{d}y-y\mathrm{d}x)$.
\end{itemize}
}
\end{thmalph}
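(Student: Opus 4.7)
The plan is to combine the two partial classifications already obtained in \cite{BM17} with a direct case analysis of the remaining configurations. By \cite[Theorem~5.1]{BM17} the homogeneous elements of $\mathbf{FP}(3)$ are, up to linear conjugation, exactly $\mathcal{H}_1,\ldots,\mathcal{H}_{11}$, and by \cite[Theorem~6.1]{BM17} the only element of $\mathbf{FP}(3)$ all of whose singularities are non-degenerate is, up to projective equivalence, the \textsc{Fermat} foliation defined by $\omegaoverline_{3}$. Hence it only remains to treat a non-homogeneous foliation $\F\in\mathbf{FP}(3)$ admitting at least one degenerate singularity $m$, \emph{i.e.} with $\mu(\F,m)\geq 2$, and to show that the orbit $\mathcal{O}(\F)$ under $\mathrm{Aut}(\pp)$ contains one of the $1$-forms $\omegaoverline_{1},\omegaoverline_{2},\omegaoverline_{4},\omegaoverline_{5}$.

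First I would use the action of $\mathrm{Aut}(\pp)=\mathrm{PGL}_3(\mathbb{C})$ to place the distinguished degenerate singularity $m$ at the origin of an affine chart $(x,y)$, so that $\F$ is described by a polynomial $1$-form $\omega=A(x,y)\mathrm{d}x+B(x,y)\mathrm{d}y$ of degree at most $4$ with $A(0,0)=B(0,0)=0$. According to the algebraic multiplicity $\nu(\F,m)\in\{1,2,3\}$ and to the projective type of the first non-vanishing homogeneous jet of $\omega$ at $m$, I would subdivide the discussion into a short list of normal forms, each obtained after exhausting the isotropy subgroup of $m$ in $\mathrm{Aut}(\pp)$; in each stratum the $1$-form $\omega$ depends on only a handful of free parameters.

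For each such normal form I would write down the implicit differential equation $F(x,y,p)=0$ defining $\Leg\F$ on $\pd$, compute its $p$-discriminant $\Delta(\Leg\F)$, and then compute the curvature $K(\Leg\F)$ as a meromorphic $2$-form on $\pd$ with poles on $\Delta(\Leg\F)$. Imposing $K(\Leg\F)\equiv 0$ translates into a polynomial system in the free parameters; after solving this system and discarding those solutions for which $\F$ is in fact homogeneous (thus already accounted for by some $\mathcal{H}_{i}$) or has only non-degenerate singularities (giving back $\omegaoverline_{3}$), I expect to recover exactly the four models $\omegaoverline_{1}$, $\omegaoverline_{2}$, $\omegaoverline_{4}$, $\omegaoverline_{5}$. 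The pairwise non-conjugacy of the sixteen resulting foliations can then be certified from discrete invariants: the multi-set of pairs $(\mu(\F,s),\nu(\F,s))$ at the singular points, the existence and multiplicity of invariant lines, and the presence or absence of an invariant pencil of lines.

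The main obstacle I foresee is the sheer bulk of the curvature computation and the subsequent elimination. The curvature is a rather involved rational expression in the partial derivatives of $F(x,y,p)$, and the polynomial system arising in each stratum is large; one must keep track of the residual gauge freedom so as not to list essentially equivalent solutions separately, and one must check that the rational equation $K(\Leg\F)=0$ does not admit spurious branches hidden inside $\Delta(\Leg\F)$. Organizing the case analysis so as to reduce it to a manageable number of sub-cases, and carrying out the algebraic elimination reliably in each of them, will in my view be the most delicate technical point of the argument.
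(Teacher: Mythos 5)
Your overall skeleton coincides with the paper's: quote \cite[Theorems~5.1 and~6.1]{BM17} for the homogeneous case and the non-degenerate case, and reduce everything else to a foliation with a degenerate singularity, to be analysed through normal forms and the curvature of $\Leg\F$. However, two of your working assumptions conceal genuine difficulties that the paper resolves by specific ideas you do not supply. First, your claim that in each stratum \og the $1$-form $\omega$ depends on only a handful of free parameters\fg{} is false for a degenerate singularity $m$ with $\nu(\F,m)\in\{1,2\}$: fixing such a point and exhausting its isotropy still leaves a large number of free coefficients in a degree-three foliation, and the elimination coming from $K(\Leg\F)\equiv0$ is by far the heaviest part of the whole classification. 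The paper isolates this as Proposition~\ref{pro:cas-nilpotent-noeud-ordre-2} (no foliation of $\mathbf{FP}(3)$ has a degenerate singularity of algebraic multiplicity at most $2$), whose proof is an involved computation by contradiction occupying an appendix of \cite{Bed17}; the authors even record as an open problem the search for a non-computational argument. Your plan would have to reproduce that computation without the benefit of knowing in advance that the answer is empty.

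Second, for the remaining case $\nu(\F,m)=3$ the paper does \emph{not} proceed by brute force. Before any curvature computation it proves structural lemmas that collapse the parameter space: Lemma~\ref{lem:v=3-leg-plat} shows (via a degeneration of $\F$ onto the homogeneous foliation defined by its $3$-jet, the classification of \cite{BM17}, and the stability of \textsc{Baum--Bott} indices) that either $\F$ is homogeneous or the $3$-jet of $\omega$ at $m$ is not saturated, and Lemma~\ref{lem:v=3-mu<13-leg-plat} uses Proposition~\ref{pro:cas-nilpotent-noeud-ordre-2} together with \textsc{Camacho--Sad} indices along the invariant line $(mm')$ to force $b_1\in\{0,2\}$ in the resulting normal form. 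Only after these reductions does the curvature computation become a short list of tractable sub-cases. These intermediate arguments are the substantive mathematical content of the degenerate case; without them (or an equivalent device) the elimination you describe is not organized into \og a manageable number of sub-cases\fg{} but remains a very large polynomial system, and your proposal gives no mechanism for taming it. The final non-conjugacy check via discrete invariants is fine and matches the paper's Remark~\ref{rem:non-conjugaison-Hi-Fj}.
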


\noindent The orbits of $\F_1$ and $\F_2$ are both of dimension $6$ which is the minimal dimension possible, and this in any degree greater than or equal to $2$ (\cite[Proposition~2.3]{CDGBM10}). D.~\textsc{Cerveau}, J.~\textsc{D\'eserti}, D.~\textsc{Garba Belko} and R.~\textsc{Meziani} have shown that in degree $2$ there are exactly two orbits of dimension $6$ (\cite[Proposition 2.7]{CDGBM10}). Theorem~\ref{thmalph:classification} allows us to establish a similar result in degree $3$:
\begin{coralph}\label{coralph:dim-min}
{\sl Up to automorphism of $\pp$ the foliations $\mathcal{F}_{1}$ and $\mathcal{F}_{2}$ are the only foliations that realize the minimal dimension of orbits in degree $3.$}
\end{coralph}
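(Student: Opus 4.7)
The plan is to combine the classification of Theorem~\ref{thmalph:classification} with a direct stabilizer computation. Since $\dim\mathrm{Aut}(\pp)=8$, we have $\dim\mathcal{O}(\F)=8-\dim\mathrm{Stab}(\F)$ for every $\F\in\mathbf{F}(3)$, and \cite[Proposition~2.3]{CDGBM10} guarantees $\dim\mathcal{O}(\F)\geq 6$ in any degree $d\geq 2$. The corollary therefore reduces to determining, up to $\mathrm{PGL}_3(\C)$, the elements of $\mathbf{F}(3)$ whose stabilizer is exactly $2$-dimensional.

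I would first treat the list given by Theorem~\ref{thmalph:classification}: for each of its sixteen foliations I compute the infinitesimal stabilizer $\mathrm{stab}(\F)=\{X\in\mathfrak{sl}_3(\C):L_X\omega\wedge\omega=0\}$ by expanding a generic projective vector field in the standard $8$-dimensional basis $\{\px,\py,x\px,y\px,x\py,y\py,x(x\px+y\py),y(x\px+y\py)\}$ and solving the resulting linear system in the eight coefficients. The outcome I expect is $\dim\mathrm{stab}(\mathcal{F}_1)=\dim\mathrm{stab}(\mathcal{F}_2)=2$, while $\dim\mathrm{stab}(\F)\leq 1$ for the remaining fourteen foliations, placing the latter in orbits of dimension $\geq 7$. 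The inequivalence of $\mathcal{F}_1$ and $\mathcal{F}_2$ is then checked via a discrete projective invariant, for instance the isomorphism type of their singular configurations or the tangency data of their dual webs with a generic line.

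To conclude, I need to verify that every $\F\in\mathbf{F}(3)$ with $\dim\mathrm{Stab}(\F)=2$ lies in $\mathbf{FP}(3)$, and is therefore among the sixteen foliations above. For this I would classify the connected $2$-dimensional subgroups $G\subset\mathrm{PGL}_3(\C)$ up to conjugation---a short list parametrized by the conjugacy classes of $2$-dimensional subalgebras of $\mathfrak{sl}_3(\C)$ (essentially: the diagonal $2$-torus, the abelian unipotent families contained in the nilradical of a Borel, and the few solvable affine types)---and, for each $G$, write down the finite-dimensional linear family of degree-$3$ $G$-semi-invariant $1$-forms on $\C^2$. A direct computation of the curvature of the dual web over each of these families should show that it vanishes identically, forcing every such foliation into $\mathbf{FP}(3)$.

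The main obstacle is precisely this last verification. Although the list of subgroups $G$ is short and the families of $G$-semi-invariant forms are low-dimensional, the curvature calculations on each family are cumbersome, and the non-abelian solvable cases are likely to be the most delicate. Once this step is in place, combining it with the stabilizer computation of the previous paragraph identifies the orbit-dimension-$6$ locus in $\mathbf{F}(3)$ with $\mathcal{O}(\mathcal{F}_1)\cup\mathcal{O}(\mathcal{F}_2)$, proving the corollary.
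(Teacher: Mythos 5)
Your reduction is the right one, and your first two steps are sound: $\dim\mathcal{O}(\F)=8-\dim\mathrm{Iso}(\F)$, the lower bound $6$ from \cite[Proposition~2.3]{CDGBM10}, and the stabilizer computations for the sixteen models of Theorem~\ref{thmalph:classification} (which are exactly the content of Proposition~\ref{pro:isotropies}, and do give dimension $2$ only for $\F_1$ and $\F_2$). The genuine gap is the step you yourself flag as the main obstacle: the claim that every $\F\in\mathbf{F}(3)$ with a $2$-dimensional stabilizer lies in $\mathbf{FP}(3)$. As written, this rests on a classification of the connected $2$-dimensional subgroups of $\mathrm{PGL}_3(\C)$ up to conjugacy, a determination of the semi-invariant families of degree-three $1$-forms for each of them, and curvature computations that you have not carried out and only predict ``should'' vanish; until that is done the argument does not close. (Note also that if you did have those invariant families in hand, it would be more economical to decide directly which of their members are conjugate to $\F_1$ or $\F_2$ than to detour through flatness and Theorem~\ref{thmalph:classification}.)

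The paper closes precisely this step with a short transcendental argument that your proposal is missing. Since the Legendre transformation intertwines the actions of $\mathrm{Aut}(\pp)$ and $\mathrm{Aut}(\pd)$, one has $\mathrm{Iso}(\Leg\F)\cong\mathrm{Iso}(\F)$, hence $\dim\mathrm{Iso}(\Leg\F)=2$. Fix $m\in\pd\smallsetminus\Delta(\Leg\F)$; the germ of the $3$-web $\Leg\F$ at $m$ then admits a local symmetry group of dimension at least $2$, and by \'E.~Cartan's classification \cite{Car08} a germ of regular planar $3$-web with a symmetry group of dimension $\geq 2$ is parallelizable, hence has vanishing Blaschke curvature. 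As the curvature of $\Leg\F$ is holomorphic on $\pd\smallsetminus\Delta(\Leg\F)$, it vanishes identically, so $\F\in\mathbf{FP}(3)$ and Theorem~\ref{thmalph:classification} applies; Proposition~\ref{pro:isotropies} then leaves only $\F_1$ and $\F_2$. Replacing your case-by-case curvature computation by this dichotomy is what turns the statement into a genuine corollary of the classification rather than a new classification problem; I recommend you adopt it, or else actually carry out the subgroup-by-subgroup verification you sketch.
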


\noindent A.~\textsc{Beltr\'{a}n}, M.~\textsc{Falla~Luza} and D.~\textsc{Mar\'{\i}n} have shown in \cite{BFM13} that $\mathbf{FP}(3)$ contains the set of foliations  $\F\in\mathbf{F}(3)$ whose leaves which are not straight lines do not have inflection points. These foliations are called \textsl{convex}. From these works (\cite[Corollary~4.7]{BFM13}) and from Theorem~\ref{thmalph:classification} we deduce the classification, up to automorphism of $\pp$, of convex foliations of degree $3$ on $\pp.$

\begin{coralph}\label{coralph:class-convexe-3}
{\sl Up to automorphism of $\pp$ there are four convex foliations of degree three on the complex projective plane, namely the foliations  $\mathcal{H}_{1},\mathcal{H}_{\hspace{0.2mm}3},\F_1$ and $\F_3.$
}
\end{coralph}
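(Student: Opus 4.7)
The strategy is a direct reduction using results already in hand. By \cite[Corollary~4.7]{BFM13}, every convex foliation of $\mathbf{F}(3)$ has a flat \textsc{Legendre} transform, so the set of convex foliations of degree three is contained in $\mathbf{FP}(3)$. Combining this with Theorem~\ref{thmalph:classification}, any convex foliation is $\mathrm{Aut}(\pp)$-equivalent to one of the sixteen representatives $\mathcal{H}_{1},\ldots,\mathcal{H}_{11},\F_{1},\ldots,\F_{5}$. The problem therefore reduces to deciding, for each of these sixteen representatives, whether its non-linear leaves are inflection-free, or equivalently whether the transverse inflection divisor $\ItrF$ vanishes.

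For the eleven homogeneous representatives $\mathcal{H}_{i}$, this verification was already carried out in \cite{BM17}, and the only two for which $\ItrF=0$ are $\mathcal{H}_{1}$ and $\mathcal{H}_{\hspace{0.2mm}3}$. The remaining work is therefore the case-by-case examination of $\F_{1},\ldots,\F_{5}$. For each $\F_{i}$ I would write down the inflection polynomial directly from the affine $1$-form $\omegaoverline_{i}$ via the standard $3\times 3$ determinant formula, factor it in $\C[x,y]$, and compare its irreducible components with the list of invariant lines of $\F_{i}$. The foliation $\F_{3}$ being the classical \textsc{Fermat} foliation, its inflection divisor is well known to split completely into twelve invariant lines; a short direct computation shows that the inflection divisor of $\F_{1}$ is also supported on the invariant lines of $\F_{1}$, while for $\F_{2}$, $\F_{4}$ and $\F_{5}$ an explicit irreducible transverse component of positive degree appears.

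The main obstacle is therefore computational rather than conceptual: one must carry out the factorisation of the five degree-nine inflection polynomials and, in each case, identify which factors correspond to invariant lines of the foliation. Once this bookkeeping is complete, assembling the outcome of the homogeneous case from \cite{BM17} with the five verifications above yields exactly the four convex representatives $\mathcal{H}_{1}$, $\mathcal{H}_{\hspace{0.2mm}3}$, $\F_{1}$ and $\F_{3}$ announced in the statement, which proves the corollary.
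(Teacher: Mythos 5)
Your proposal is correct and follows essentially the same route as the paper: reduce via \cite[Corollary~4.7]{BFM13} to the sixteen representatives of Theorem~\ref{thmalph:classification}, then check convexity of each (the paper delegates the homogeneous cases to \cite{BM17} and records the outcome for $\F_1,\ldots,\F_5$ in a remark, leaving the easy inflection-divisor computations to the reader, exactly as you outline).
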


\noindent This corollary is an analog in degree $3$ of a result on the foliations of degree $2$ due to C.~\textsc{Favre} and J.~\textsc{Pereira} (\cite[Proposition~7.4]{FP15}).
\medskip

\noindent According to~\cite[Theorem~3]{MP13}, we know that the closure in $\mathbf{F}(3)$ of the orbit $\mathcal{O}(\F_3)$ of the \textsc{Fermat} foliation $\F_3$ is an irreducible component of $\mathbf{FP}(3)$. To our knowledge, at present, this is the only explicit example of an irreducible component of $\mathbf{FP}(3)$ appearing in the literature. By analyzing the incidence relations between the closures of the orbits of $\mathcal{H}_{\hspace{0.2mm}i}$ and $\F_j,$ we obtain the decomposition of  $\mathbf{FP}(3)$ into its irreducible components.

\begin{thmalph}\label{thmalph:12-Composantes irréductibles}
{\sl The closures being taken in $\mathbf{F}(3)$ we have
\begin{align*}
&
\overline{\mathcal{O}(\F_1)}=\mathcal{O}(\F_1),&&
\overline{\mathcal{O}(\F_2)}=\mathcal{O}(\F_2),\\
&
\overline{\mathcal{O}(\F_3)}=\mathcal{O}(\F_1)\cup\mathcal{O}(\mathcal{H}_{1})\cup\mathcal{O}(\mathcal{H}_{\hspace{0.2mm}3})\cup\mathcal{O}(\F_3),&&
\overline{\mathcal{O}(\F_4)}=\mathcal{O}(\F_1)\cup\mathcal{O}(\F_2)\cup\mathcal{O}(\F_4),\\
&
\overline{\mathcal{O}(\mathcal{H}_{1})}=\mathcal{O}(\F_1)\cup\mathcal{O}(\mathcal{H}_{1}),&&
\overline{\mathcal{O}(\mathcal{H}_{\hspace{0.2mm}2})}=\mathcal{O}(\F_2)\cup\mathcal{O}(\mathcal{H}_{\hspace{0.2mm}2}),\\
&
\overline{\mathcal{O}(\mathcal{H}_{\hspace{0.2mm}3})}=\mathcal{O}(\F_1)\cup\mathcal{O}(\mathcal{H}_{\hspace{0.2mm}3}),&&
\overline{\mathcal{O}(\mathcal{H}_{\hspace{0.2mm}8})}=\mathcal{O}(\F_2)\cup\mathcal{O}(\mathcal{H}_{\hspace{0.2mm}8}),\\
&
\overline{\mathcal{O}(\mathcal{H}_{\hspace{0.2mm}5})}=\mathcal{O}(\F_1)\cup\mathcal{O}(\mathcal{H}_{\hspace{0.2mm}5}),&&
\overline{\mathcal{O}(\mathcal{H}_{\hspace{0.2mm}4})}\supset\mathcal{O}(\F_2)\cup\mathcal{O}(\mathcal{H}_{\hspace{0.2mm}4}),\\
&
\overline{\mathcal{O}(\mathcal{H}_{\hspace{0.2mm}7})}\supset\mathcal{O}(\F_1)\cup\mathcal{O}(\mathcal{H}_{\hspace{0.2mm}7}),&&
\overline{\mathcal{O}(\mathcal{H}_{\hspace{0.2mm}6})}\supset\mathcal{O}(\F_2)\cup\mathcal{O}(\mathcal{H}_{\hspace{0.2mm}6}),\\
&
\overline{\mathcal{O}(\mathcal{H}_{\hspace{0.2mm}9})}\subset\mathcal{O}(\F_1)\cup\mathcal{O}(\mathcal{H}_{\hspace{0.2mm}9}),&&
\overline{\mathcal{O}(\mathcal{H}_{\hspace{0.2mm}11})}\subset\mathcal{O}(\F_2)\cup\mathcal{O}(\mathcal{H}_{11}),\\
&
\overline{\mathcal{O}(\mathcal{H}_{\hspace{0.2mm}10})}\subset\mathcal{O}(\F_1)\cup\mathcal{O}(\F_2)\cup\mathcal{O}(\mathcal{H}_{\hspace{0.2mm}10}),&&
\overline{\mathcal{O}(\F_5)}\subset\mathcal{O}(\F_2)\cup\mathcal{O}(\F_5)
\end{align*}
with
\begin{align*}
&\dim\mathcal{O}(\F_{1})=6,&&&\dim\mathcal{O}(\F_{2})=6,&&& \dim\mathcal{O}(\mathcal{H}_{\hspace{0.2mm}i})=7, i=1,\ldots,11,\\
&
\dim\mathcal{O}(\F_{4})=7,&&& \dim\mathcal{O}(\F_{5})=7,&&& \dim\mathcal{O}(\F_{3})=8.
\end{align*}
In particular,
\begin{itemize}
  \item the set $\mathbf{FP}(3)$ has exactly twelve irreducible components, namely $\overline{\mathcal{O}(\F_3)},\,$ $\overline{\mathcal{O}(\F_4)},\,$ $\overline{\mathcal{O}(\F_5)},\,$ $\overline{\mathcal{O}(\mathcal{H}_{\hspace{0.2mm}2})},\,$ $\overline{\mathcal{O}(\mathcal{H}_{\hspace{0.4mm}k})},\,k=4,5,\ldots,11$;
  \item the set of convex foliations of degree three in $\pp$ is exactly the closure $\overline{\mathcal{O}(\F_3)}$ of $\mathcal{O}(\F_3)$ (it is therefore an irreducible closed subset of~$\mathbf{F}(3)).$
\end{itemize}
}
\end{thmalph}

\subsection*{Acknowledgment} The authors would like to thank Dominique \textsc{Cerveau} and Frank \textsc{Loray} for their interest in this work.

\bigskip

\section{Preliminaries}
\bigskip

\subsection{Webs}\label{subsec-tissus}

\noindent Let $k\geq1$ be a integer. A {\sl (global) $k$-web} $\mathcal{W}$ on a complex surface $S$ is given by an open covering $(U_{i})_{i\in I}$ of $S$ and a collection of symmetric $k$-forms  $\omega_{i}\in \mathrm{Sym}^{k}\Omega^{1}_{S}(U_{i})$ with isolated zeroes satisfying:
\begin{itemize}
\item [($\mathfrak{a}$)] there exists $g_{ij}\in \mathcal{O}^{*}_{S}(U_{i}\cap U_{j})$ such that $\omega_i$ coincides with $g_{ij}\omega_j$ on $U_i\cap U_j$;
\item [($\mathfrak{b}$)] at every generic point $m$ of $U_i$, $\omega_i(m)$ factorizes as the product of $k$ pairwise non collinear $1$-forms.
\end{itemize}
The subset of points of $S$ not satisfying condition ($\mathfrak{b}$) is called the {\sl discriminant} of $\mathcal{W}$ and it is denoted by $\Delta(\mathcal{W})$. When $k=1$ this condition is always satisfied and we recover the usual definition of a holomorphic foliation on $S$. The cocycle $(g_{ij})$ defines a line bundle $N$ on $S$, which is called {\sl normal bundle} of $\W$, and the local $k$-forms $\omega_i$ patch together to form a global section $\omega\in\mathrm{H}^0(S,\mathrm{Sym}^{k}\Omega^{1}_{S}\otimes N)$.

\noindent A global $k$-web $\W$ on $S$ is said {\sl decomposable} if there are global webs $\mathcal{W}_{1},\mathcal{W}_{2}$ on $S$ with no common subweb such that $\mathcal{W}$ is the superposition of $\mathcal{W}_{1}$ and $\mathcal{W}_{2}$. In this case we will write  $\mathcal{W}=\mathcal{W}_{1}\boxtimes\mathcal{W}_{2}.$ Otherwise it is said that $\W$ is {\sl irreducible}. We will say that $\W$ is {\sl completely decomposable} if there exist global foliations $\F_1,\ldots,\F_k$ on $S$ such that $\W=\F_1\boxtimes\cdots\boxtimes\F_k$. For more details \emph{see} \cite{PP09}.
\smallskip

\noindent In this work we restrict ourselves to the case $S=\pp$. In this case, every $k$-web $\W$ on $\pp$ can be defined in a given affine chart $(x,y)$ by a polynomial $k$-symmetric form $\omega=\sum_{i+j=k}a_{ij}(x,y)\mathrm{d}x^{i}\mathrm{d}y^{j}$, with isolated zeroes and whose discriminant is not identically zero.  Thus, $\W$ is defined by a polynomial differential equation  $F(x,y,y')=0$  of degree $k$ in $y'$. A $k$-web $\W$ on $\pp$ is said of {\sl degree} $d$ if the number of points where a generic line of $\pp$ is tangent to $\W$ is equal to $d$, it is  equivalent to require that $\W$ has normal bundle $N=\mathcal{O}_{\pp}(d+2k)$.
It is well known, \emph{see} for instance  \cite[Proposition~1.4.2]{PP09}, that the webs of degree $0$ are the algebraic webs (whose leaves are the tangent lines of a given reduced algebraic curve).

\noindent The authors in  \cite{MP13} associate to every $k$-web of degree $d\ge 1$ on $\pp$, a $d$-web of degree $k$ on the dual projective plane $\pd$, called the {\sl  \textsc{Legendre} transform} of $\W$ and denoted by $\Leg\W$. 
The leaves of $\Leg\W$ are of the form   $\check{L}=\{\mathrm{T}_{\hspace{-0.4mm}m}L\hspace{1mm}\colon m\in L\}\subset\pd$ where $L\subset\mathbb{P}^2_{\mathbb C}$ is a leaf of $\W$.
More explicitly,  let $(x,y)$ be an affine chart of $\pp$ and consider the affine chart $(p,q)$ of $\pd$ associated to the line $\{y=px-q\}\subset{\mathbb{P}^{2}_{\mathbb{C}}}.$ Let $F(x,y;p)=0$, $p=\frac{\mathrm{d}y}{\mathrm{d}x}$, be an implicit differential equation defining $\W$. Then $\Leg\W$ is given in the affine chart $(p,q)$ of $\pd$ by the implicit differential equation
\[
\check{F}(p,q;x):=F(x,px-q;p)=0, \qquad \text{with} \qquad x=\frac{\mathrm{d}q}{\mathrm{d}p}.
\]
\noindent In particular, if $\F$ is a foliation of degree $d\geq 1$ on $\pp$ defined by a $1$-form  $\omega=A(x,y)\mathrm{d}x+B(x,y)\mathrm{d}y,$ where $A,B\in\mathbb{C}[x,y],$ $\mathrm{pgcd}(A,B)=1$, then $\Leg\F$
is the irreducible $d$-web of degree $1$ on  $\pd$ defined by
\[
A(x,px-q)+pB(x,px-q)=0, \qquad \text{with} \qquad x=\frac{\mathrm{d}q}{\mathrm{d}p}.
\]
\noindent Conversely, every irreducible $d$-web of degree $1$ on $\pd$ is necessarily the \textsc{Legendre} transform of a certain foliation of degree $d$ on $\pp$ (\emph{see}  \cite{MP13}).

\subsection{Curvature and flatness}\label{subsec:courbure-platitude}

We recall here the definition of the curvature of a $k$- web $\W$.
We assume first that $\W$ is a germ of completely decomposable $k$-web on $(\C^2,0)$, $\mathcal{W}=\mathcal{F}_{1}\boxtimes\cdots\boxtimes\mathcal{F}_{k}.$
For each $1\leq i\leq k,$ let $\omega_i$ be a $1$-form defining the foliation $\F_i$ with isolated singularity at $0$. After \cite{PP08},
for each triple  $(r,s,t)$ with $1\leq r<s<t\leq k,$ we define $\eta_{rst}=\eta(\mathcal{F}_{r}\boxtimes\mathcal{F}_{s}\boxtimes\mathcal{F}_{t})$
as the unique meromorphic $1$-form satisfying the following equalities:
\begin{equation*}\label{equa:eta-rst}
{\left\{\begin{array}[c]{lll}
\mathrm{d}(\delta_{st}\,\omega_{r}) &=& \eta_{rst}\wedge\delta_{st}\,\omega_{r}\\
\mathrm{d}(\delta_{tr}\,\omega_{s}) &=& \eta_{rst}\wedge\delta_{tr}\,\omega_{s}\\
\mathrm{d}(\delta_{rs}\,\omega_{t}) &=& \eta_{rst}\wedge\delta_{rs}\,\omega_{t}
\end{array}
\right.}
\end{equation*}
where  $\delta_{ij}$ denotes the function defined by  $\omega_{i}\wedge\omega_{j}=\delta_{ij}\,\mathrm{d}x\wedge\mathrm{d}y.$
Since each $1$-form $\omega_i$ is well defined up to multiplication by an invertible element of  $\mathcal{O}(\mathbb{C}^{2},0),$
it follows that each $1$-form  $\eta_{rst}$ is well defined up to addition of a closed holomorphic $1$-form. Thus, the $1$-form
\begin{equation*}\label{equa:eta}
\hspace{7mm}\eta(\mathcal{W})=\eta(\mathcal{F}_{1}\boxtimes\cdots\boxtimes\mathcal{F}_{k})=\sum_{1\le r<s<t\le k}\eta_{rst}
\end{equation*}
is well defined up to addition of a closed holomorphic $1$-form.
The {\sl curvature} of the web $\mathcal{W}=\mathcal{F}_{1}\boxtimes\cdots\boxtimes\mathcal{F}_{k}$ is by definition the $2$-form
\begin{align*}
&K(\mathcal{W})=K(\mathcal{F}_{1}\boxtimes\cdots\boxtimes\mathcal{F}_{k})=\mathrm{d}\,\eta(\mathcal{W}).
\end{align*}
\noindent It can be checked that  $K(\mathcal{W})$ is a meromorphic $2$-form with poles along the discriminant $\Delta(\mathcal{W})$ of $\mathcal{W},$ canonically associated to $\mathcal{W}$. More precisely, for every dominant holomorphic map $\varphi,$ we have $K(\varphi^{*}\mathcal{W})=\varphi^{*}K(\mathcal{W}).$
\smallskip

\noindent Now, if $\W$ is a (not necessarily completely decomposable) $k$-web on a complex surface $S$ then its pull-back by a suitable Galoisian branched covering is totally decomposable. The invariance of the curvature of this new web by the action of the \textsc{Galois} group of the covering allows to bring it down in a global meromorphic $2$-form on $S$, with poles along the discriminant of $\W$ (\emph{see} \cite{MP13}).
\smallskip

\noindent A $k$-web $\mathcal{W}$ is called {\sl flat} if its curvature  $K(\mathcal{W})$ vanishes identically.
\smallskip

\noindent We recall a formula due to A.~\textsc{Hénaut} \cite{Hen00} which gives the curvature of a planar $3$-web $\W$ given by an implicit differential equation
\begin{align*}
& F(x,y,p):=a_{0}(x,y)p^{3}+a_{1}(x,y)p^{2}+a_{2}(x,y)p+a_{3}(x,y)=0, \qquad   p=\frac{\mathrm{d}y}{\mathrm{d}x}.
\end{align*}
Putting
\begin{small}
\begin{align*}
&\hspace{3cm}
R:=\text{Result}(F,\partial_{p}(F))=
\left| \begin{array}{ccccc}
a_0  & a_1  &  a_2  & a_3  & 0 \\
0    & a_0  &  a_1  & a_2  & a_3   \\
3a_0 & 2a_1 &  a_2  & 0    & 0 \\
0    & 3a_0 & 2a_1  & a_2  & 0  \\
0    & 0    & 3a_0  & 2a_1 & a_2
\end{array} \right|\not\equiv0,
\\
\\
&
\alpha_1=
\left| \begin{array}{ccccc}
\partial_{y}(a_{0})                     & a_0 & -a_0  &  0    &  0   \\
\partial_{x}(a_{0})+\partial_{y}(a_{1}) & a_1 &  0    & -2a_0 &  0    \\
\partial_{x}(a_{1})+\partial_{y}(a_{2}) & a_2 &  a_2  & -a_1  & -3a_0  \\
\partial_{x}(a_{2})+\partial_{y}(a_{3}) & a_3 &  2a_3 &  0    & -2a_1   \\
\partial_{x}(a_{3})                     & 0   &  0    &  a_3  & -a_2
\end{array} \right|
\quad\larger{\text{and}}\quad
\alpha_2=
\left| \begin{array}{ccccc}
0   & \partial_{y}(a_{0})                     & -a_0  &  0    &  0  \\
a_0 & \partial_{x}(a_{0})+\partial_{y}(a_{1}) &  0    & -2a_0 &  0   \\
a_1 & \partial_{x}(a_{1})+\partial_{y}(a_{2}) &  a_2  & -a_1  & -3a_0  \\
a_2 & \partial_{x}(a_{2})+\partial_{y}(a_{3}) &  2a_3 &  0    & -2a_1   \\
a_3 & \partial_{x}(a_{3})                     &  0    &  a_3  & -a_2
\end{array} \right|\hspace{0.5mm},
\end{align*}
\end{small}
\hspace{-1mm}
we have that the curvature of the $3$-web  $\W$ is given by (\cite{Hen00})
\begin{equation}\label{equa:Formule-Henaut}
K(\W)=\left(\partial_{y}\left(\frac{\alpha_1}{R}\right)-\partial_{x}\left(\frac{\alpha_2}{R}\right)\right)\mathrm{d}x\wedge\mathrm{d}y.
\end{equation}

\subsection{Singularities and inflection divisor of a foliation on the projective plane}

A degree $d$ holomorphic foliation $\F$ on $\pp$ is defined in homogeneous coordinates $(x,y,z)$ by a $1$-form
$$\omega=a(x,y,z)\mathrm{d}x+b(x,y,z)\mathrm{d}y+c(x,y,z)\mathrm{d}z,$$
where $a,$ $b$ and $c$ are homogeneous polynomials of degree $d+1$
without common factor and satisfying the \textsc{Euler} condition $i_{\mathrm{R}}\omega=0$, where $\mathrm{R}=x\frac{\partial{}}{\partial{x}}+y\frac{\partial{}}{\partial{y}}+z\frac{\partial{}}{\partial{z}}$
denotes the radial vector field and  $i_{\mathrm{R}}$ is the interior product by  $\mathrm{R}$. The {\sl singular locus} $\mathrm{Sing}\mathcal{F}$ of $\mathcal{F}$ is the projectivization of the singular locus of~$\omega$ $$\mathrm{Sing}\,\omega=\{(x,y,z)\in\mathbb{C}^3\,\vert \, a(x,y,z)=b(x,y,z)=c(x,y,z)=0\}.$$

\noindent Let $\mathcal{C}\subset\pp$ be an algebraic curve with homogeneous equation $F(x,y,z)=0.$ We say that $\mathcal{C}$ is an \textsl{invariant curve} by $\F$ if $\mathcal{C}\smallsetminus\Sing\F$ is a union of (ordinary) leaves of the regular foliation $\F|_{\pp\smallsetminus\Sing\F}$. In algebraic terms, this is equivalent to require that the $2$-form  $\omega\wedge\mathrm{d}F$ is divisible by $F$, {\it i.e.} it vanishes along each irreducible component of $\mathcal{C}.$

\noindent When each irreducible component of $\mathcal{C}$ is not $\F$-invariant, for every point $p$ of  $\mathcal{C}$ we define the \textsl{tangency order} $\Tang(\F,\mathcal{C},p)$ of $\F$ with $\mathcal{C}$ at $p$ as follows. We fix a local chart $(\mathrm{u},\mathrm{v})$  such that $p=(0,0)$; let $f(\mathrm{u},\mathrm{v})=0$ be a reduced local equation of  $\mathcal{C}$ at a neighborhood of $p$ and let $\X$ be a vector field defining the germ of $\F$ at $p$. We denote by $\X(f)$ the \textsc{Lie} derivative of $f$ with respect to $\X$ and by  $\langle f,\X(f)\rangle$ the ideal of  $\C\{\mathrm{u},\mathrm{v}\}$ generated by $f$ and $\X(f)$. Then
$$\Tang(\F,\mathcal{C},p)=\dim_\mathbb{C}\frac{\C\{\mathrm{u},\mathrm{v}\}}{\langle f,\X(f)\rangle}.$$
It is easy to see that this definition is well-posed, and that
 $\Tang(\F,\mathcal{C},p)<+\infty$ because~$\mathcal{C}$ is not $\F$-invariant.
\smallskip

\noindent Let us recall some local notions attached to the pair
$(\mathcal{F},s)$, where  $s\in\Sing\mathcal{F}$. The germ of $\F$ at $s$ is defined, up to multiplication by a unity in the local ring  $\mathcal{O}_s$ at $s$, by a vector field
\begin{small}
$\mathrm{X}=A(\mathrm{u},\mathrm{v})\frac{\partial{}}{\partial{\mathrm{u}}}+B(\mathrm{u},\mathrm{v})\frac{\partial{}}{\partial{\mathrm{v}}}$.
\end{small}
\noindent The {\sl algebraic multiplicity} $\nu(\mathcal{F},s)$ of $\mathcal{F}$ at $s$ is given by  $$\nu(\mathcal{F},s)=\min\{\nu(A,s),\nu(B,s)\},$$ where $\nu(g,s)$ denotes the algebraic multiplicity of the algebraic function  $g$ at $s$. Let us denote by $\mathfrak{L}_s$ the family of straight lines through $s$ which are not invariant by $\F$. For every line $\ell_s$ of~$\mathfrak{L}_s,$ we have the inequalities $1\leq\Tang(\F,\ell_s,s)\leq d$. This allows us to associate to the pair~$(\mathcal{F},s)$ the following natural (invariant) integers
\begin{align*}
&\hspace{0.8cm}\tau(\mathcal{F},s)=\min\{\Tang(\F,\ell_s,s)\hspace{1mm}\vert\hspace{1mm}\ell_s\in\mathfrak{L}_s\},&&\hspace{0.5cm}
\kappa(\mathcal{F},s)=\max\{\Tang(\F,\ell_s,s)\hspace{1mm}\vert\hspace{1mm}\ell_s\in\mathfrak{L}_s\}.
\end{align*}
The invariant $\tau(\mathcal{F},s)$ represents the tangency order of  $\mathcal{F}$ with a generic line passing through $s$. It is easy to see that
$$\tau(\mathcal{F},s)=\min\{k\geq 1\hspace{1mm}\colon\det(J^{k}_{s}\,\mathrm{X},\mathrm{R}_{s})\neq0\}\geq\nu(\mathcal{F},s),$$
 where $J^{k}_{s}\,\mathrm{X}$  denotes the  $k$-jet of $\mathrm{X}$ at $s$ and $\mathrm{R}_{s}$ is the radial vector field centered at $s$.
\smallskip

\noindent The singularity $s$ is called  {\sl radial of order} $n-1$ if $\nu(\mathcal{F},s)=1$ and $\tau(\mathcal{F},s)=n.$
\smallskip

\noindent The  \textsl{\textsc{Milnor} number} of $\F$ at $s$ is the integer $$\mu(\mathcal{F},s)=\dim_\mathbb{C}\mathcal{O}_s/\langle A,B\rangle,$$ where $\langle A,B\rangle$ denotes the ideal of $\mathcal{O}_s$ generated by $A$ and $B$.
\smallskip

\noindent The singularity $s$ is called {\sl non-degenerate} if  $\mu(\F,s)=1$, or equivalently if the linear part  $J^{1}_{s}\mathrm{X}$ of $\mathrm{X}$ possesses two non-zero eigenvalues  $\lambda,\mu$.
In this case, the quantity  $\mathrm{BB}(\F,s)=\frac{\lambda}{\mu}+\frac{\mu}{\lambda}+2$ is called the {\sl  \textsc{Baum-Bott} invariant} of $\F$ at $s$ (\emph{see} \cite{BB72}). 
By Briot-Bouquet's Theorem (\emph{see} \cite{CS82} for a generalization to any singularity) there is at least a germ of curve
$\mathcal{C}$ at $s$ which is invariant by $\F$. Up to local diffeomorphism we can assume that $s=(0,0)$\, $\mathrm{T}_{s}\mathcal{C}=\{\mathrm{v}=\,0\,\}$ and $J^{1}_{s}\mathrm{X}=\lambda \mathrm{u}\frac{\partial}{\partial\mathrm{u}}+(\varepsilon \mathrm{u}+\mu\hspace{0.1mm}\mathrm{v})\frac{\partial}{\partial \mathrm{v}}$, where we can take $\varepsilon=0$ if $\lambda\neq\mu$. The quantity $\mathrm{CS}(\F,\mathcal{C},s)=\frac{\lambda}{\mu}$ is called the {\sl \textsc{Camacho-Sad} index} of $\F$ at $s$ along $\mathcal{C}$.
\medskip

\noindent Finally, let us recall the notion of inflection divisor of $\F$. Let $\mathrm{Z}=E\frac{\partial}{\partial x}+F\frac{\partial}{\partial y}+G\frac{\partial}{\partial z}$ be a homogeneous vector field of degree $d$ on  $\mathbb{C}^3$ non collinear to the radial vector field describing $\mathcal{F},$ {\it i.e.} such that $\omega=i_{\mathrm{R}}i_{\mathrm{Z}}\mathrm{d}x\wedge\mathrm{d}y\wedge\mathrm{d}z.$ The {\sl inflection divisor} of $\mathcal{F}$, denoted by $\IF$, is the divisor of $\pp$ defined by the homogeneous equation
\begin{equation}\label{equa:ext1}
\left| \begin{array}{ccc}
x &  E &  \mathrm{Z}(E) \\
y &  F &  \mathrm{Z}(F)  \\
z &  G &  \mathrm{Z}(G)
\end{array} \right|=0.
\end{equation}
This divisor has been studied in  \cite{Per01} in a more general context.
In particular, the following properties has been proved.
\begin{enumerate}
\item [\texttt{1.}] On $\mathbb{P}^{2}_{\mathbb{C}}\smallsetminus\mathrm{Sing}\mathcal{F},$ $\IF$ coincides with the curve described by the inflection
points of the leaves of $\mathcal{F}$;
\item [\texttt{2.}] If $\mathcal{C}$ is an irreducible algebraic curve invariant by $\mathcal{F}$ then $\mathcal{C}\subset\IF$ if and only if $\mathcal{C}$ is an invariant line;
\item [\texttt{3.}] $\IF$ can be decomposed into $\IF=\IinvF+\ItrF,$ where
the support of  $\IinvF$ consists in the set of invariant lines of $\F$ and the support of  $\ItrF$ is the closure of the inflection points along the leaves of  $\mathcal{F}$  which are not lines;
\item [\texttt{4.}] The degree of the divisor $\IF$ is $3d.$
\end{enumerate}

\noindent The foliation $\mathcal{F}$ will be called {\sl convex} if its inflection divisor  $\IF$ is totally invariant by  $\mathcal{F}$, {\it i.e.} if $\IF$
is a product of invariant lines.

\section{Foliations of $\mathbf{FP}(3)$ having at least one degenerate singularity}\label{sec:cas-dégénéré}
\medskip

\Subsection{Case of a degenerate singularity of algebraic multiplicity at most $2$}

\noindent In \cite[Appendix~A]{Bed17} the first author gives a computational proof of the following statement.

\begin{pro}\label{pro:cas-nilpotent-noeud-ordre-2}
{\sl Let $\mathcal{F}$ be a degree three foliation on $\pp$ having a degenerate singularity of algebraic multiplicity at most $2$. Then the dual $3$-web $\Leg\F$ of $\F$ is not flat.}
\end{pro}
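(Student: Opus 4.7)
The plan is to use the action of $\mathrm{Aut}(\pp)$ to place the degenerate singularity $s$ at the origin of a standard affine chart, and then to reduce the lowest-order jet of a vector field $\X$ defining $\F$ at $s$ to a short list of normal forms. Because $\F$ has degree three, a defining $1$-form $\omega = A(x,y)\,\mathrm{d}x + B(x,y)\,\mathrm{d}y$ with $A,B\in\mathbb{C}[x,y]$ of degree at most $4$ depends on finitely many coefficients, and constraining the jet of $\X$ at the origin fixes the low-degree part of $A$ and $B$ up to a residual finite-dimensional parameter space. The dual web $\Leg\F$ is then, by \S\ref{subsec-tissus}, the irreducible $3$-web on $\pd$ defined in the affine chart $(p,q)$ by the cubic implicit equation $A(x,px-q) + p\,B(x,px-q) = 0$ in the variable $x$.

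\textbf{Case analysis.} I would split the argument according to the linear algebra of the jet at $s$. If $\nu(\F,s)=1$, then $J^{1}_{s}\X\neq 0$ but $\det J^{1}_{s}\X=0$ by the degeneracy hypothesis; linear changes of coordinates in $\mathbb{C}^2$ reduce $J^{1}_{s}\X$ either to the nilpotent form $y\px$, or to the rank-one semisimple form $\lambda\hspace{0.2mm}y\py$ with $\lambda\in\mathbb{C}^*$ (the saddle-node case, which can be further normalized to $\lambda=1$). If $\nu(\F,s)=2$, the linear part of $\X$ vanishes identically, and a linear change of coordinates brings the homogeneous quadratic part of $\X$ to one of the finitely many $\mathrm{GL}_2(\mathbb{C})$-normal forms of quadratic vector fields on $\mathbb{C}^2$. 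In each of these templates the residual freedom consists of the higher-order coefficients of $\X$ together with the subgroup of $\mathrm{Aut}(\pp)$ stabilizing the chosen normal form, which further cuts down the parameter count.

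\textbf{Computation and conclusion.} For every template I would apply H\'enaut's formula (\ref{equa:Formule-Henaut}) to $\Leg\F$, obtaining the curvature $K(\Leg\F)$ as a rational $2$-form in $(p,q)$ whose numerator $N(p,q)$ is a polynomial in $(p,q)$ and in the residual parameters of $\F$. Flatness of $\Leg\F$ amounts to the identical vanishing of $N(p,q)$, i.e.\ to the simultaneous vanishing of each of its monomial coefficients viewed as polynomials in the parameter ring. The efficient way to reach a contradiction is to expand $K(\Leg\F)$ in Taylor series around a point of $\pd$ where $\Leg\F$ is smooth (for instance the image of a generic tangent line to $\F$ at the origin), extract a handful of low-order coefficients and solve the resulting polynomial system; one then checks that every solution either forces $\mathrm{pgcd}(A,B)\neq 1$ or violates the prescribed multiplicity $\nu(\F,s)\leq 2$, so that no foliation of $\mathbf{F}(3)$ with the imposed degeneracy can appear.

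\textbf{Main obstacle.} The technical heart of the argument is the verification, template by template, that no specialization of the residual parameters produces $K(\Leg\F)\equiv 0$; this is an inherently computer-algebra task because the number of parameters (up to roughly a dozen in the worst case, $\nu(\F,s)=1$ with nilpotent linear part) and the size of the Taylor expansions of $\alpha_1/R$ and $\alpha_2/R$ put the calculation out of reach of by-hand manipulation. The delicate organizational point is to exploit fully the residual $\mathrm{Aut}(\pp)$-action at each stage, so as to keep the parameter count minimal; otherwise the polynomial systems obtained from the monomial coefficients of $N(p,q)$ become unwieldy. This is precisely the bookkeeping carried out in \cite[Appendix~A]{Bed17}.
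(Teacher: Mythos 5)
Your proposal follows essentially the same route as the paper, which itself gives no self-contained argument but describes exactly this strategy --- normalize the degenerate singularity, compute $K(\Leg\F)$ via H\'enaut's formula (\ref{equa:Formule-Henaut}), and show that the vanishing of the numerator's coefficients contradicts $\deg\F=3$ --- and defers the computer-algebra verification to \cite[Appendix~A]{Bed17}. Note that the paper explicitly poses (Problem~\ref{prob:cas-nilpotent-noeud-ordre-2}) the task of finding a non-computational proof, so a genuinely different argument is not currently known.
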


\noindent In this appendix the matter is about a proof by contradiction: first, the author assumes that there is a foliation $\F$ of degree $3$ on $\pp$ such that the $3$-web $\Leg\F$ is flat and such that the singular locus $\Sing\F$ contains a point $m$ satisfying $\mu(\F,m)\geq2$\, and \,$\nu(\F,m)\leq2$; then, he explicitly calculates the curvature of $\Leg\F$ by Formula~(\ref{equa:Formule-Henaut}) and he shows that the condition $K(\Leg\F)\equiv0$ contradicts the hypothesis $\deg\F=3.$

\begin{prob}\label{prob:cas-nilpotent-noeud-ordre-2}
{\sl
Give a non-computational proof of Proposition~\ref{pro:cas-nilpotent-noeud-ordre-2}.
}
\end{prob}

\Subsection{Case of a degenerate singularity of algebraic multiplicity $3$}

\noindent In this paragraph we are interested in the foliations $\F\in\mathbf{FP}(3)$ which have a degenerate singularity $m$ of algebraic multiplicity $3.$ We distinguish two cases according to whether $\Sing\F=\{m\}$ or $\{m\}\varsubsetneq\Sing\F.$

\subsubsection{The singular locus is reduced to a point of algebraic multiplicity $3$}
We start by establishing the following statement classifying the foliations of $\mathbf{F}(3)$ whose singular locus is reduced to a point of algebraic multiplicity $3.$
\begin{pro}\label{pro:v=3-mu=13}
{\sl Let $\F$ be a foliation of degree $3$ on $\pp$ with exactly one singularity. Let $\omega$ be a $1$-form defining $\F.$ If this singularity is of algebraic multiplicity $3$, then up to isomorphism $\omega$ is of one of the following types
\smallskip
\begin{itemize}
\item[\texttt{1. }]\hspace{1mm} $x^3\mathrm{d}x+y^2(c\hspace{0.1mm}x+y)(x\mathrm{d}y-y\mathrm{d}x),\hspace{1mm}c\in\mathbb{C}$;

\item[\texttt{2. }]\hspace{1mm} $x^3\mathrm{d}x+y(x+c\hspace{0.1mm}xy+y^2)(x\mathrm{d}y-y\mathrm{d}x),\hspace{1mm}c\in\mathbb{C}$;

\item[\texttt{3. }]\hspace{1mm} $x^3\mathrm{d}x+(x^2+c\hspace{0.1mm}xy^2+y^3)(x\mathrm{d}y-y\mathrm{d}x),\hspace{1mm}c\in\mathbb{C}$;

\item[\texttt{4. }]\hspace{1mm} $x^2y\mathrm{d}x+(x^3+c\hspace{0.1mm}xy^2+y^3)(x\mathrm{d}y-y\mathrm{d}x),\hspace{1mm}c\in\mathbb{C}$;

\item[\texttt{5. }]\hspace{1mm} $x^2y\mathrm{d}x+(x^3+\delta\,xy+y^3)(x\mathrm{d}y-y\mathrm{d}x),\hspace{1mm}\delta\in\mathbb{C}^*$;

\item[\texttt{6. }]\hspace{1mm} $x^2y\mathrm{d}y+(x^3+c\hspace{0.1mm}xy^2+y^3)(x\mathrm{d}y-y\mathrm{d}x),\hspace{1mm} c\in\mathbb{C}$;

\item[\texttt{7. }]\hspace{1mm} $xy(x\mathrm{d}y-\lambda\,y\mathrm{d}x)
                                 +(x^3+y^3)(x\mathrm{d}y-y\mathrm{d}x),\hspace{1mm}\lambda\in\mathbb{C}\setminus\{0,1\}$;

\item[\texttt{8. }]\hspace{1mm} $xy(y-x)\mathrm{d}x+(c_0\,x^3+c_1x^2y+y^3)(x\mathrm{d}y-y\mathrm{d}x),\hspace{1mm} c_0(c_0+c_1+1)\neq0$.
\end{itemize}
\smallskip

\noindent These eight $1$-forms are not linearly conjugated with each other.
}
\end{pro}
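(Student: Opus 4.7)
The plan is to fix the singular point $m$ at the origin of an affine chart using $\mathrm{Aut}(\pp)$, and describe $\F$ by a polynomial $1$-form $\omega = A\,\mathrm{d}x + B\,\mathrm{d}y$. The hypothesis $\nu(\F,m) = 3$ forces $A$ and $B$ to vanish to order at least three at $(0,0)$, while $\deg\F = 3$ is equivalent to the condition $xA_4 + yB_4 \equiv 0$, which means that the degree-$4$ homogeneous part of $\omega$ must be $h\,(x\,\mathrm{d}y - y\,\mathrm{d}x)$ for some homogeneous cubic $h$. Hence
\[
\omega = \omega_3 + h\,(x\,\mathrm{d}y - y\,\mathrm{d}x),
\]
with $\omega_3 = A_3\,\mathrm{d}x + B_3\,\mathrm{d}y$ a homogeneous cubic $1$-form and $h$ a homogeneous cubic polynomial, both nonzero (otherwise $\nu(\F,m) > 3$ or the whole line at infinity would be singular). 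Since the total Milnor number of a degree-$3$ foliation on $\pp$ equals $d^2 + d + 1 = 13$, the assumption $\Sing\F = \{m\}$ is equivalent to $\mu(\F,m) = 13$ together with the absence of singularities on the line at infinity.

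Next I would classify $\omega_3$ modulo the linear subgroup $\mathrm{GL}_2(\mathbb{C})$ of the stabilizer of $m$ in $\mathrm{Aut}(\pp)$. The key invariant is the homogeneous quartic $E := xA_3 + yB_3$, whose projective zeros in $\mathbb{P}^1$ are precisely the tangent directions at $m$ of the $\F$-invariant lines through $m$: the line with direction $(a,b)$ is $\F$-invariant if and only if $aA_3(a,b)+bB_3(a,b)=0$, since the degree-$4$ contribution $aA_4(a,b)+bB_4(a,b)$ is automatically $-abh(a,b)+abh(a,b)=0$. The case $E\equiv 0$ is excluded because it forces $\omega_3\propto (x\,\mathrm{d}y-y\,\mathrm{d}x)$, so $\omega$ would define the radial foliation (of degree $0$) rather than a degree-$3$ one. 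One then stratifies according to the partition of the four roots of $E$, namely $(4)$, $(3,1)$, $(2,2)$, $(2,1,1)$, $(1,1,1,1)$, puts $E$ in canonical form using $\mathrm{GL}_2$, and normalizes $\omega_3$ within the affine fiber of $\omega_3\mapsto E$ (the kernel being the forms $g\cdot(x\,\mathrm{d}y - y\,\mathrm{d}x)$ with $g$ quadratic).

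For each normal form of $\omega_3$, let $G_0\subset \mathrm{GL}_2$ be its stabilizer; I would then enumerate the orbits of the cubic $h$ under $G_0$ and impose the two conditions encoding the uniqueness of the singularity: (i) $V(A)\cap V(B) = \{(0,0)\}$ in $\mathbb{C}^2$, and (ii) $\gcd(h,E) = 1$ as homogeneous polynomials, since a singularity of $\F$ at a point $[X:Y:0]$ of the line at infinity occurs exactly where $h$ and $E$ vanish simultaneously. Several sub-cases collapse or are ruled out by (i)--(ii)---in particular the stratum $(1,1,1,1)$ and various isolated specializations of the parameters---leaving the eight families $\omega_1,\ldots,\omega_8$ listed in the statement with their prescribed constraints. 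The main obstacle is the length and delicacy of the case analysis: the $\mathrm{GL}_2$-normalization of $\omega_3$ in each stratum, the parametrization of the $G_0$-orbits of $h$, and the verification of (i)--(ii) over the admissible parameter domain. Pairwise non-conjugacy of the eight types is finally established by comparing discrete invariants (the partition of the roots of $E$, the multiplicities of $\omega_3$ along its invariant lines, and the $G_0$-orbit of $h$).
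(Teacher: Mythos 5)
Your overall skeleton coincides with the paper's: fix $m$ at the origin, write $\omega=\omega_3+C(x,y)(x\mathrm{d}y-y\mathrm{d}x)$, stratify by the root configuration of the tangent cone $E=xA_3+yB_3$ (the paper's four-lines/three-lines/two-lines/one-line division is exactly your partition stratification, with $(1,1,1,1)$ excluded by the paper's Lemma~\ref{lem:exclure-c�ne-4-droites}, which shows that each linear factor of $E$ must divide $\gcd(A_3,B_3)$ and be coprime to $C$ --- the same combination of your conditions (i) and (ii)), and your observation that singularities at infinity are the common zeros of $C$ and $E$ is correct and is what the paper uses. The paper then normalizes $\omega_3$ via the Cerveau--Mattei logarithmic decomposition, which is one concrete implementation of your ``normalize within the fiber of $\omega_3\mapsto E$''.

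There is, however, one genuine gap: you normalize only modulo the \emph{linear} subgroup $\mathrm{GL}_2(\mathbb{C})$ of the isotropy of $m$, and modulo its subgroup $G_0$ when acting on the cubic $h$. That group is too small to reach the eight listed normal forms. The full isotropy of $m=[0:0:1]$ in $\mathrm{PGL}_3(\mathbb{C})$ also contains the two-parameter family of homographies $\varphi_{e,f}(x,y)=\bigl(\tfrac{x}{1+ex+fy},\tfrac{y}{1+ex+fy}\bigr)$; a direct computation gives $(1+ex+fy)^5\varphi_{e,f}^{\,*}\omega=\omega_3+\bigl(h+eB_3-fA_3\bigr)(x\mathrm{d}y-y\mathrm{d}x)$, so these maps fix $\omega_3$ and translate $h$ by the subspace $\langle A_3,B_3\rangle$ of cubics. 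This extra freedom is used at essentially every step of the paper's Lemmas~\ref{lem:c�ne-3-droites}--\ref{lem:c�ne-1-droite} and is indispensable: for instance, in the stratum $E=x^4$ with $\omega_3=x^3\mathrm{d}x$, the $\mathrm{GL}_2$-stabilizer $(x,y)\mapsto(\alpha x,\beta x+\gamma y)$ acts on the three roots of $h(1,s)$ by affine maps of $\mathbb{C}$, hence preserves the property that $h$ has three distinct roots, whereas the target form $y^2(cx+y)$ has a double root; only the translation $h\mapsto h-fx^3$ provided by $\varphi_{0,f}$ lets you kill the coefficient $c_0$ and reach type~\texttt{1}. As written, your procedure would therefore terminate with larger families (typically two extra parameters) that are not the $\omega_1,\dots,\omega_8$ of the statement; the fix is simply to enlarge the normalizing group to the full isotropy of $m$ and incorporate the translations by $\langle A_3,B_3\rangle$ into the reduction of $h$.
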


\noindent This proposition is an analog in degree $3$ of a result on the foliations of degree $2$ due to D.~\textsc{Cerveau}, J.~\textsc{D\'eserti}, D.~\textsc{Garba Belko} and R.~\textsc{Meziani} (\cite[Proposition 1.8]{CDGBM10}). The proof that we will give is very close to that of~\cite{CDGBM10}; it will result from Lemmas~\ref{lem:exclure-cône-4-droites},~\ref{lem:cône-3-droites},~\ref{lem:cône-2-droites} and \ref{lem:cône-1-droite} stated below. In these four lemmas $\mathcal{F}$ denotes a foliation of degree three on $\pp$ defined by a $1$-form $\omega$ and such that
\begin{itemize}
\item[\texttt{1. }] the unique singularity of $\F$ is $O=[0:0:1]$;
\item[\texttt{2. }] the jets of order $1$ and $2$ of $\omega$ at $(0,0)$ are zero, {\it i.e.} $\nu(\F,O)=3$.
\end{itemize}
In this case
$$\omega=A(x,y)\mathrm{d}x+B(x,y)\mathrm{d}y+C(x,y)(x\mathrm{d}y-y\mathrm{d}x),$$
where $A,$ $B$ and $C$ are homogeneous polynomials of degree $3.$ The foliation $\F$ being of degree three, the tangent cone $xA+yB$ of $\omega$ at $(0,0)$ can not be identically zero. The polynomial $C$ is also not identically zero, otherwise the line at infinity would be invariant by $\mathcal{F}$ which would therefore have a singularity on this line, which is excluded. We will reason according to the nature of the tangent cone which, a priori, can be four lines, three lines, two lines or a single line.
\begin{lem}\label{lem:exclure-cône-4-droites}
{\sl
Every irreducible factor $L$ of $xA+yB$ divides $\gcd(A,B)$ and does not divide~$C.$ In particular, the tangent cone of $\omega$ at $(0,0)$ is not the union of four distinct lines.
}
\end{lem}

\begin{proof}
Up to isomorphism, we can assume that $L=x$; then $x$ divides $B.$ Thus on the line $x=0$ the form $\omega$ writes as $A(0,y)\mathrm{d}x-y\,C(0,y)\mathrm{d}x=y^3\left(A(0,1)-y\,C(0,1)\right)\mathrm{d}x.$ Since $O$ is the unique singularity of $\mathcal{F},$ the product $A(0,1)C(0,1)$ is zero. The point $[0:1:0]$ being non-singular, $C(0,1)$ is non-zero and as a result $A(0,1)=0$; hence $x$ divides $A$ but not $C.$
\end{proof}

\begin{lem}\label{lem:cône-3-droites}
{\sl If the tangent cone of $\omega$ at $(0,0)$ is the union of three distinct lines, then up to automorphisms of $\mathbb P^2_{\mathbb C}$,~$\omega$ is of type
\begin{align*}
&\hspace{2cm} xy(y-x)\mathrm{d}x+(c_0\,x^3+c_1x^2y+y^3)(x\mathrm{d}y-y\mathrm{d}x),&& c_0(c_0+c_1+1)\neq0.
\end{align*}
}
\end{lem}

\begin{proof}
We can assume that $xA+yB=\ast x^2y(y-x),\ast\in\mathbb{C}^{*}$; it follows that $\omega$ writes (\cite{CM82})
\begin{small}
\begin{align*}
& x^2y(y-x)\left(\lambda_0\frac{\mathrm{d}x}{x}+\lambda_1\frac{\mathrm{d}y}{y}+\lambda_2\frac{\mathrm{d}(y-x)}{y-x}
+\delta\hspace{0.1mm}\mathrm{d}\left(\frac{y}{x}\right)\right)+(c_0\,x^3+c_1x^2y+c_2xy^2+c_3y^3)(x\mathrm{d}y-y\mathrm{d}x),&& \delta,\lambda_i,c_i\in\mathbb{C}.
\end{align*}
\end{small}
\hspace{-1mm}Therefore we have
\begin{align*}
&A(x,y)=y\Big((y-x)(\lambda_0\,x-\delta y)-\lambda_2x^2\Big),&&
B(x,y)=x\Big((y-x)(\lambda_1x+\delta y)+\lambda_2xy\Big),&&
C(x,y)=\sum_{i=0}^{3}c_i\hspace{0.1mm}x^{3-i}y^{i}.
\end{align*}
According to Lemma~\ref{lem:exclure-cône-4-droites}, the polynomial $xy(y-x)$ divides $A$ and $B$ but not $C$, which means that
\begin{align*}
& A(0,1)=A(1,0)=A(1,1)=B(0,1)=B(1,0)=B(1,1)=0&& \text{and} && C(0,1)C(1,0)C(1,1)\neq0.
\end{align*}
It follows that $\delta=\lambda_1=\lambda_2=0$\, and that \,$c_0c_3(c_0+c_1+c_2+c_3)\neq0$. The foliation $\F$ being of degree three $\lambda_0$~is non-zero; we can therefore assume that~$\lambda_0=1$, hence
\begin{align*}
& \omega=xy(y-x)\mathrm{d}x+(c_0\,x^3+c_1x^2y+c_2xy^2+c_3y^3)(x\mathrm{d}y-y\mathrm{d}x).
\end{align*}
After conjugating $\omega$ by the homothety $\left(\frac{1}{c_3}x,\frac{1}{c_3}y\right)$, we can normalize the coefficient $c_3$ to $1$; as a consequence
\begin{align*}
& \omega=xy(y-x)\mathrm{d}x+(c_0\,x^3+c_1x^2y+c_2xy^2+y^3)(x\mathrm{d}y-y\mathrm{d}x),&& c_0(c_0+c_1+c_2+1)\neq0.
\end{align*}
The conjugation by the automorphism~\begin{small}$\left(\dfrac{x}{1+c_2y},\dfrac{y}{1+c_2y}\right)$\end{small} of $\mathbb{P}^2_{\mathbb C}$ allows us to cancel $c_2$. Hence the statement holds.
\end{proof}

\begin{lem}\label{lem:cône-2-droites}
{\sl
If the tangent cone of $\omega$ at $(0,0)$ is composed of two distinct lines, then up to isomorphism $\omega$ is of one of the following types
\begin{itemize}
\item[\texttt{1. }]\hspace{1mm} $x^2y\mathrm{d}x+(x^3+c\hspace{0.1mm}xy^2+y^3)(x\mathrm{d}y-y\mathrm{d}x),\hspace{1mm} c\in\mathbb{C}$;

\item[\texttt{2. }]\hspace{1mm} $x^2y\mathrm{d}x+(x^3+\delta\,xy+y^3)(x\mathrm{d}y-y\mathrm{d}x),\hspace{1mm} \delta\in\mathbb{C}^*$;

\item[\texttt{3. }]\hspace{1mm} $x^2y\mathrm{d}y+(x^3+c\hspace{0.1mm}xy^2+y^3)(x\mathrm{d}y-y\mathrm{d}x),\hspace{1mm} c\in\mathbb{C}$;

\item[\texttt{4. }]\hspace{1mm} $xy(x\mathrm{d}y-\lambda\,y\mathrm{d}x)+(x^3+y^3)(x\mathrm{d}y-y\mathrm{d}x),\hspace{1mm}
                                 \lambda\in\mathbb{C}\setminus\{0,1\}$.
\end{itemize}
}
\end{lem}

\begin{proof}
Up to linear conjugation we are in one of the two following situations
\begin{itemize}
\item[($\mathfrak{a}$)] $xA+yB=\ast x^3y,\quad \ast\in\mathbb{C}^{*}$;

\item[($\mathfrak{b}$)] $xA+yB=\ast x^2y^2,\quad\hspace{-1.7mm} \ast\in\mathbb{C}^{*}$.
\end{itemize}
Let us start by studying the eventuality ($\mathfrak{a}$). In this case the $1$-form $\omega$ writes (\cite{CM82})
\begin{align*}
& x^3y\left(\lambda_0\frac{\mathrm{d}x}{x}+\lambda_1\frac{\mathrm{d}y}{y}+\mathrm{d}\left(\frac{\delta_1xy+\delta_2y^2}{x^2}\right)\right)
+(c_0\,x^3+c_1x^2y+c_2xy^2+c_3y^3)(x\mathrm{d}y-y\mathrm{d}x),&& \lambda_i,\delta_i,c_i\in\mathbb{C}.
\end{align*}
Then we have
\begin{align*}
&A(x,y)=y(\lambda_0x^2-\delta_1xy-2\delta_2y^2),&&
B(x,y)=x(\lambda_1x^2+\delta_1xy+2\delta_2y^2),&&
C(x,y)=\sum_{i=0}^{3}c_i\hspace{0.1mm}x^{3-i}y^{i}.
\end{align*}
According to Lemma~\ref{lem:exclure-cône-4-droites}, the polynomial $xy$ divides $A$ and $B$ but not $C$. As a result $\delta_2=\lambda_1=0$\, and \,$c_0c_3\neq0.$ The foliation $\F$ being of degree three the coefficient $\lambda_0$ is non-zero and we can assume it equals $1.$ Thus $\F$ is described by \begin{align*}
& \omega=x^2y\mathrm{d}x+\delta_1xy(x\mathrm{d}y-y\mathrm{d}x)+(c_0\,x^3+c_1x^2y+c_2xy^2+c_3y^3)(x\mathrm{d}y-y\mathrm{d}x).
\end{align*}
The diagonal linear transformation \begin{small}$\left(\dfrac{1}{c_0}x,\sqrt[3]{\dfrac{1}{c_3c_0^2}}y\right)$\end{small} allows us to assume that $c_0=c_3=1$; as a consequence
\begin{align*}
& \omega=x^2y\mathrm{d}x+(x^3+\delta_1xy+c_1x^2y+c_2xy^2+y^3)(x\mathrm{d}y-y\mathrm{d}x).
\end{align*}
If $\delta_1=0$, resp. $\delta_1\neq0$, by conjugating $\omega$ by
\begin{align*}
& \left(\frac{x}{1+c_1y},\frac{y}{1+c_1y}\right),&&
\text{resp.}\hspace{1mm}
\left(
\frac{x}{1-\left(\dfrac{c_2}{\delta_1}\right)y-\left(\dfrac{\delta_1c_1+c_2}{\delta_1^2}\right)x},
\frac{y}{1-\left(\dfrac{c_2}{\delta_1}\right)y-\left(\dfrac{\delta_1c_1+c_2}{\delta_1^2}\right)x}
\right),
\end{align*}
we reduce ourselves to $c_1=0$, resp. $c_1=c_2=0$, that is, to
\begin{align*}
& \omega=x^2y\mathrm{d}x+(x^3+c_2xy^2+y^3)(x\mathrm{d}y-y\mathrm{d}x),&&
\text{resp.}\hspace{1mm}
\omega=x^2y\mathrm{d}x+(x^3+\delta_1xy+y^3)(x\mathrm{d}y-y\mathrm{d}x);
\end{align*}
hence the two first announced models.
\vspace{2mm}

\noindent Let us now consider the possibility ($\mathfrak{b}$). In this case $\omega$ writes (\cite{CM82})
\begin{align*}
& x^2y^2\left(\lambda_0\frac{\mathrm{d}x}{x}+\lambda_1\frac{\mathrm{d}y}{y}+\mathrm{d}\left(\frac{\delta_1x^2+\delta_2y^2}{xy}\right)\right)
+(c_0\,x^3+c_1x^2y+c_2xy^2+c_3y^3)(x\mathrm{d}y-y\mathrm{d}x),&& \lambda_i,\delta_i,c_i\in\mathbb{C}.
\end{align*}
Here $A(x,y)=y(\delta_1x^2+\lambda_0\,xy-\delta_2y^2)$\, and \,$B(x,y)=x(\delta_2y^2+\lambda_1xy-\delta_1x^2)$. According to Lemma~\ref{lem:exclure-cône-4-droites} again, $xy$ divides $\mathrm{pgcd}(A,B)$ and does not divide $C$, which is equivalent to $\delta_1=\delta_2=0$\, and \,$c_0c_3\neq0.$

\noindent The foliation $\F$ being of degree three the sum $\lambda_0+\lambda_1$ is non-zero; then one of the coefficients $\lambda_i$ is non-zero and we can obviously normalize it to $1.$ Since the lines of the tangent cone ({\it i.e.} $x=0$ and $y=0$) play a symmetrical role, it suffices to treat the eventuality $\lambda_1=1.$ Thus $\F$ is given by
\begin{align*}
&\hspace{1cm}\omega=xy(x\mathrm{d}y+\lambda_0y\mathrm{d}x)+(c_0\,x^3+c_1x^2y+c_2xy^2+c_3y^3)(x\mathrm{d}y-y\mathrm{d}x),&& (\lambda_0+1)c_0c_3\neq0.
\end{align*}
Let $\alpha$ be in $\mathbb{C}$ such that $\alpha^3=\dfrac{1}{c_3c_0^2}$; let us put $\beta=c_0\alpha^2.$ After conjugating $\omega$ by $\left(\alpha\,x,\beta y\right)$, we can assume that $c_0=c_3=1$; as a result
\begin{align*}
&\hspace{1cm}\omega=xy(x\mathrm{d}y+\lambda_0y\mathrm{d}x)+(x^3+c_1x^2y+c_2xy^2+y^3)(x\mathrm{d}y-y\mathrm{d}x),&& \lambda_0+1\neq0.
\end{align*}
If $\lambda_0=0$, resp. $\lambda_0\neq0$, by conjugating $\omega$ by
\begin{align*}
& \left(\frac{x}{1-c_1x},\frac{y}{1-c_1x}\right),&&
\text{resp.}\hspace{1mm}
\left(\frac{x}{1+\left(\dfrac{c_2}{\lambda_0}\right)y-c_1x},\frac{y}{1+\left(\dfrac{c_2}{\lambda_0}\right)y-c_1x}\right),
\end{align*}
we reduce ourselves to $c_1=0$, resp. $c_1=c_2=0$, that is, to
\begin{align*}
\hspace{1cm}
&\omega=x^2y\mathrm{d}y+(x^3+c_2xy^2+y^3)(x\mathrm{d}y-y\mathrm{d}x),&&\\
\hspace{1cm}
\text{resp.}\hspace{1mm}
&\omega=xy(x\mathrm{d}y+\lambda_0y\mathrm{d}x)+(x^3+y^3)(x\mathrm{d}y-y\mathrm{d}x),\quad \lambda_0(\lambda_0+1)\neq0,
\end{align*}
which are the two last announced models.
\end{proof}

\begin{lem}\label{lem:cône-1-droite}
{\sl
If the tangent cone of $\omega$ at $(0,0)$ is reduced to a single line, then up to isomorphism $\omega$ is of one of the following types
\begin{itemize}
\item[\texttt{1. }]\hspace{1mm} $x^3\mathrm{d}x+y^2(c\hspace{0.1mm}x+y)(x\mathrm{d}y-y\mathrm{d}x),\hspace{1mm}c\in\mathbb{C}$;

\item[\texttt{2. }]\hspace{1mm} $x^3\mathrm{d}x+y(x+c\hspace{0.1mm}xy+y^2)(x\mathrm{d}y-y\mathrm{d}x),\hspace{1mm}c\in\mathbb{C}$;

\item[\texttt{3. }]\hspace{1mm} $x^3\mathrm{d}x+(x^2+c\hspace{0.1mm}xy^2+y^3)(x\mathrm{d}y-y\mathrm{d}x),\hspace{1mm}c\in\mathbb{C}$.
\end{itemize}
}
\end{lem}

\begin{proof}
We can assume that the tangent cone is the line $x=0$; then $\omega$ writes as
\begin{align*}
& x^4\left(\lambda\frac{\mathrm{d}x}{x}
+\mathrm{d}\left(\frac{\delta_1x^2y+\delta_2xy^2+\delta_3y^3}{x^3}\right)\right)
+(c_0\,x^3+c_1x^2y+c_2xy^2+c_3y^3)(x\mathrm{d}y-y\mathrm{d}x),&&
\lambda,\delta_i,c_i\in\mathbb{C}.
\end{align*}
Then we have
\begin{align*}
&A(x,y)=\lambda\,x^3-\delta_1x^2y-2\delta_2xy^2-3\delta_3y^3,&&
B(x,y)=x(\delta_1x^2+2\delta_2xy+3\delta_3y^2),&&
C(x,y)=\sum_{i=0}^{3}c_i\hspace{0.1mm}x^{3-i}y^{i}.
\end{align*}
According to Lemma~\ref{lem:exclure-cône-4-droites}, $x$ divides $A$ and $B$ but not $C$; as a result~$\delta_3=0$ and $c_3\neq0$. The foliation $\F$ being of degree three the coefficient $\lambda$ is non-zero and we can assume that $\lambda=1.$ The conjugation by the homothety $\left(\frac{1}{c_3}x,\frac{1}{c_3}y\right)$ allows us to assume that~$c_3=1.$ Thus $\F$ is described by
\begin{align*}
&& \omega=x^3\mathrm{d}x+(\delta_1x^2+2\delta_2xy+c_0\,x^3+c_1x^2y+c_2xy^2+y^3)(x\mathrm{d}y-y\mathrm{d}x).
\end{align*}
We have the following three possibilities to study
\begin{itemize}
\item[--] $\delta_2\neq0$;

\item[--] $\delta_1=\delta_2=0$;

\item[--] $\delta_2=0,\delta_1\neq0$.
\end{itemize}
\vspace{2mm}

\textbf{\textit{1.}} If $\delta_2\neq0$, then by conjugating $\omega$ by $\left(\alpha^2x,\alpha^{3/2}y-\alpha\delta_1x\right)$, where $\alpha=2\delta_2,$ we reduce ourselves to $\delta_1=0$ and $\delta_2=\frac{1}{2}.$ As a result $\F$ is given by
\begin{align*}
&& \omega=x^3\mathrm{d}x+(xy+c_0\,x^3+c_1x^2y+c_2xy^2+y^3)(x\mathrm{d}y-y\mathrm{d}x).
\end{align*}
The conjugation by the diffeomorphism $\left(\dfrac{x}{1+c_0y-c_1x},\dfrac{y}{1+c_0y-c_1x}\right)$ allows us to assume that $c_0=c_1=0$; as a consequence
\begin{align*}
&& \omega=x^3\mathrm{d}x+y(x+c_2xy+y^2)(x\mathrm{d}y-y\mathrm{d}x),
\end{align*}
hence the second announced model.

\textbf{\textit{2.}} If $\delta_1=\delta_2=0$ the $1$-form $\omega$ writes
\begin{align*}
&& \omega=x^3\mathrm{d}x+(c_0\,x^3+c_1x^2y+c_2xy^2+y^3)(x\mathrm{d}y-y\mathrm{d}x).
\end{align*}
\noindent Let $\alpha$ be in $\mathbb{C}$ such that $3\alpha^2+2c_2\alpha+c_1=0.$ After conjugating $\omega$ by $\left(x,y+\alpha\,x\right)$, we can assume that $c_1=0.$ Then the conjugation by the diffeomorphism $\left(\dfrac{x}{1+c_0y},\dfrac{y}{1+c_0y}\right)$ allows us to cancel $c_0$; hence the first announced model.
\vspace{2mm}

\textbf{\textit{3.}} When $\delta_2=0$ and $\delta_1\neq0$, the form $\omega$ writes
\begin{align*}
&& \omega=x^3\mathrm{d}x+(\delta_1x^2+c_0\,x^3+c_1x^2y+c_2xy^2+y^3)(x\mathrm{d}y-y\mathrm{d}x).
\end{align*}
By conjugating $\omega$ by $\left(\delta_1^4x,\delta_1^3y\right)$, we can assume that $\delta_1=1.$ Then by conjugating by $$\left(\dfrac{x}{1-c_1y-(c_0+c_1)x},\dfrac{y}{1-c_1y-(c_0+c_1)x}\right),$$ we reduce ourselves to $c_0=c_1=0$, that is, to the third announced model.
\end{proof}

\begin{proof}[\sl Proof of Proposition~\ref{pro:v=3-mu=13}] It suffices to choose affine coordinates $(x,y)$ such that the point $(0,0)$ is singular of $\F$ and to use Lemmas \ref{lem:exclure-cône-4-droites}, \ref{lem:cône-3-droites}, \ref{lem:cône-2-droites}, \ref{lem:cône-1-droite}.
\end{proof}

\noindent We are now ready to describe up to isomorphism the foliations of $\mathbf{FP}(3)$ whose singular locus is reduced to a point of algebraic multiplicity~$3.$
\begin{pro}\label{pro:overline-omega2}
{\sl Let $\F$ be a foliation of degree $3$ on $\pp$ with exactly one singularity. Assume that this singularity is of algebraic multiplicity $3$ and that the $3$-web $\Leg\F$ is flat. Then $\F$ is linearly conjugated to the foliation $\F_2$ described by the $1$-form $$\omegaoverline_{2}=x^{3}\mathrm{d}x+y^{3}(x\mathrm{d}y-y\mathrm{d}x).$$
}
\end{pro}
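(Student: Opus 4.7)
The plan is to combine Proposition~\ref{pro:v=3-mu=13} with a case-by-case flatness test based on H\'enaut's formula~(\ref{equa:Formule-Henaut}). After choosing affine coordinates that place the unique singularity at the origin, $\F$ is defined, up to linear conjugation, by one of the eight families of $1$-forms enumerated in Proposition~\ref{pro:v=3-mu=13}. The key observation underpinning the whole argument is that $\omegaoverline_{2}=x^{3}\mathrm{d}x+y^{3}(x\mathrm{d}y-y\mathrm{d}x)$ coincides with the first family of Proposition~\ref{pro:v=3-mu=13} evaluated at $c=0$, so the proposition amounts to showing that no other parameter value, in any of the eight families, yields a flat dual $3$-web.

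For each family, the concrete procedure would be to write the affine implicit equation $F(x,y,p)=A(x,y)+p\,B(x,y)=0$ defining $\F$, then form $\check{F}(p,q,x):=F(x,px-q,p)=0$ defining $\Leg\F$ in the dual chart, and read off its coefficients $a_{0}(p,q),\ldots,a_{3}(p,q)$ as a polynomial in $x$. Plugging those coefficients into the $5\times 5$ determinants $R$, $\alpha_{1}$, $\alpha_{2}$ of H\'enaut's formula produces the curvature
\[
K(\Leg\F)=\Bigl(\partial_{q}\bigl(\alpha_{1}/R\bigr)-\partial_{p}\bigl(\alpha_{2}/R\bigr)\Bigr)\mathrm{d}p\wedge\mathrm{d}q,
\]
and the condition $K(\Leg\F)\equiv 0$ translates, after clearing the denominator $R^{2}$, into polynomial identities in $(p,q)$ whose coefficients are polynomials in the parameters of the family.

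The expected outcome, to be verified family by family, is the following. For the families of types~2--8 in Proposition~\ref{pro:v=3-mu=13}, one should extract from the numerator of $K(\Leg\F)$ at least one monomial in $(p,q)$ whose coefficient, viewed as a polynomial in the parameters ($c$, $c_0$, $c_1$, $\lambda$ or $\delta$), admits no admissible root in the prescribed range, thereby ruling out flatness for the whole family. For type~1 the same analysis should show that flatness forces $c=0$, in which case $\omega=x^{3}\mathrm{d}x+y^{3}(x\mathrm{d}y-y\mathrm{d}x)=\omegaoverline_{2}$ and therefore $\F$ is linearly conjugated to~$\F_{2}$.

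The main obstacle is purely computational: $R$ is a polynomial of degree up to $15$ in $x$ whose coefficients are polynomials in $(p,q)$ and the parameters, and $\alpha_{1}$, $\alpha_{2}$ are of comparable size, so a serious execution of the plan has to rely on symbolic computation, exactly in the spirit of the appendix of~\cite{Bed17} invoked for Proposition~\ref{pro:cas-nilpotent-noeud-ordre-2}. A more delicate secondary point is that, within each family, one must verify that the parameter normalisations already performed in Lemmas~\ref{lem:c�ne-3-droites}--\ref{lem:c�ne-1-droite} have exhausted the residual linear symmetries preserving the shape of $\omega$, so that the surviving values really correspond to distinct $\mathrm{PGL}_{3}(\mathbb{C})$-orbits and no parameter value other than $c=0$ in type~1 is secretly conjugated to $\omegaoverline_{2}$.
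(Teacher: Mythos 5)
Your proposal follows essentially the same route as the paper: reduce to the eight normal forms of Proposition~\ref{pro:v=3-mu=13}, write the implicit equation of $\Leg\F$ in a dual affine chart, and use H\'enaut's formula~(\ref{equa:Formule-Henaut}) to show that vanishing of the curvature rules out types 2--8 and forces $c=0$ in type 1, yielding $\omegaoverline_{2}$. The only differences are cosmetic (choice of dual chart) or superfluous (your worry about residual conjugations is automatically absorbed by the flatness computation itself).
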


\begin{proof}
Let $\omega$ be a $1$-form describing $\F$ in an affine chart $(x,y)$ and let $(p,q)$ be the affine chart of $\pd$ corresponding to the line $\{px-qy=1\}\subset\pp.$ Up to linear conjugation $\omega$ is of one of the eight types of Proposition~\ref{pro:v=3-mu=13}.
\begin{itemize}
      \smallskip
  \item [--] If $\omega=x^3\mathrm{d}x+y^2(c\hspace{0.1mm}x+y)(x\mathrm{d}y-y\mathrm{d}x),\,c\in\mathbb{C}$, then the $3$-web $\Leg\F$ is given by the differential equation $q(q')^3+cq'+1=0$, where $q'=\frac{\mathrm{d}q}{\mathrm{d}p}$. The explicit computation of $K(\Leg\F)$ leads to $$K(\Leg\F)=-\dfrac{4c^2(2c^3+27q)}{q^2(4c^3+27q)^2}\mathrm{d}p\wedge\mathrm{d}q;$$
      as a result $\Leg\F$ is flat if and only if $c=0$, in which case $\omega=\omegaoverline_{2}=x^{3}\mathrm{d}x+y^{3}(x\mathrm{d}y-y\mathrm{d}x).$
      \smallskip
  \item [--] If $\omega=x^3\mathrm{d}x+y(x+c\hspace{0.1mm}xy+y^2)(x\mathrm{d}y-y\mathrm{d}x),\,c\in\mathbb{C}$, then $\Leg\F$ is described by the differential equation $F(p,q,w):=qw^3+pw^2+(c-q)w+1=0$, where $w=\frac{\mathrm{d}q}{\mathrm{d}p}$. The explicit computation of $K(\Leg\F)$ shows that it has the form $$K(\Leg\F)=\frac{\sum\limits_{i+j\leq6}\rho_{i}^{j}(c)p^iq^j}{\Delta(p,q)^2}\mathrm{d}p\wedge\mathrm{d}q,$$
      where $\Delta$ is the $w$-discriminant of $F$ and the $\rho_{i}^{j}$'s are polynomials in $c$ with $\rho_{1}^{5}(c)=4\neq0$; hence $K(\Leg\F)\not\equiv0.$

      \noindent Similarly, we verify that $\Leg\F$ can not be flat when $\F$ is given by one of the last six $1$-forms of Proposition~\ref{pro:v=3-mu=13}.
\end{itemize}
\end{proof}
\bigskip

\subsubsection{The singular locus contains a point of algebraic multiplicity $3$ and is not reduced to this point}

We begin by proving four lemmas.

\begin{lem}\label{lem:v=3-leg-plat}
{\sl Let $\F$ be a foliation of degree three on $\pp$, let $m$ be a singular point of $\F$ and let $\omega$ be a $1$-form describing~$\F$. Assume that this singularity is of algebraic multiplicity $3$ and that the $3$-web $\Leg\F$ is flat. Then
\begin{itemize}
  \item [--] either $\F$ is homogeneous;
  \item [--] or the $3$-jet of $\omega$ at $m$ is not saturated.
\end{itemize}
}
\end{lem}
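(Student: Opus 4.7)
\medskip

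\noindent\textbf{Proof plan.} The plan is to establish the contrapositive: assuming $\F$ is non-homogeneous and the $3$-jet of $\omega$ at $m$ is saturated, I will show that $\Leg\F$ cannot be flat. Choose affine coordinates with $m=(0,0)$. Since $\deg\F=3$ and $\nu(\F,m)=3$, inspection of the degree condition at the line at infinity shows that $\omega$ takes the form
\[
\omega = A_3\,\mathrm{d}x + B_3\,\mathrm{d}y + G\,(x\,\mathrm{d}y-y\,\mathrm{d}x),
\]
with $A_3,B_3,G$ homogeneous of degree $3$, $\F$ being non-homogeneous precisely when $G\not\equiv 0$. The $3$-jet is $\omega_3 = A_3\,\mathrm{d}x + B_3\,\mathrm{d}y$; its saturation ($\gcd(A_3,B_3)=1$) forces the tangent cone $T(x,y):=xA_3+yB_3$ to be a nonzero polynomial of degree $4$, since $T\equiv 0$ would entail $A_3=yH$, $B_3=-xH$ with $H$ of degree $2$ common to both coefficients.

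\medskip

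I would then compute $\Leg\F$ explicitly. Substituting $y=px-q$ in $A+pB=0$ and expanding in $q$ using the homogeneity of $A_3,B_3,G$ yields an implicit equation $\hat F(p,q,x)=a_0x^3+a_1x^2+a_2x+a_3=0$, where $q^i$ divides $a_i$ and the leading coefficient is $a_0=\tau(p)+q\,g(p)$ with $\tau(p):=T(1,p)$ and $g(p):=G(1,p)$. Geometrically, this $q$-grading says that the dual line $\check m=\{q=0\}$ of $m$ lies in the discriminant $\Delta(\Leg\F)$, the three sheets all collapsing to $x=0$ along $\check m$, which reflects the tangency of order $3$ of $\F$ with every line through $m$.

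\medskip

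Next I would split into cases according to the factorization of $T$ into four linear factors, counted with multiplicity, following the partitions $1{+}1{+}1{+}1$, $2{+}1{+}1$, $2{+}2$, $3{+}1$, and $4$. In each case a linear automorphism of $\pp$ normalizes $T$ up to a bounded number of residual parameters. Moreover, any two $3$-jets $\omega_3,\omega_3'$ with the same tangent cone differ by $F\,(x\,\mathrm{d}y-y\,\mathrm{d}x)$ for some degree-$2$ polynomial $F$, so $\omega_3$ is determined within its tangent-cone class by three additional parameters; the polynomial $G$ contributes four more. For each resulting finite-dimensional family I would apply H\'enaut's formula~\eqref{equa:Formule-Henaut} to $\hat F$ and show that $K(\Leg\F)$ does not vanish identically.

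\medskip

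The main obstacle is the size of H\'enaut's $5\times 5$ determinants expressed in the many residual parameters. The plan to tame it is to exploit the grading $q^i\mid a_i$: expanding $K(\Leg\F)$ as a Laurent series in $q$ around $\check m$, I expect the leading nonzero coefficient to be a polynomial in $p$ whose vanishing imposes a finite system of algebraic conditions on the coefficients of $g$ and on the residual parameters of $\omega_3$, with no solution compatible with $g\not\equiv 0$. Should this shortcut fail in some subcase, a direct computer-algebra computation---in the spirit of the arguments sketched for Proposition~\ref{pro:cas-nilpotent-noeud-ordre-2} and those of \cite{Bed17}, \cite{BM17}---provides a fallback, and in either case the finite case analysis combined with H\'enaut's formula yields the claimed dichotomy.
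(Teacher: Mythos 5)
There is a genuine gap, and in fact the predicted outcome of your computation is wrong. Your contrapositive replaces ``$\F$ is not homogeneous'' by ``$G\not\equiv 0$'', but these are not equivalent: $G\equiv 0$ means precisely that the line at infinity \emph{of the chosen chart} is $\F$-invariant, so a homogeneous foliation written in a chart whose line at infinity is not its distinguished invariant line lands squarely inside your family with $G\not\equiv0$ and saturated $3$-jet. Concretely, applying $[x:y:z]\mapsto[x:y:z+x]$ to $\mathcal{H}_1$ yields the $1$-form $y^{3}\mathrm{d}x-x^{3}\mathrm{d}y+x^{3}(x\mathrm{d}y-y\mathrm{d}x)$, which has saturated $3$-jet $y^{3}\mathrm{d}x-x^{3}\mathrm{d}y$, nonzero $G=x^{3}$, and a \emph{flat} Legendre transform. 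Hence the system $K(\Leg\F)\equiv 0$ does have solutions with $g\not\equiv0$ --- a positive-dimensional family of them, coming from the eleven flat homogeneous foliations of \cite[Theorem~5.1]{BM17} viewed in non-adapted charts --- and your plan would still have to prove that \emph{every} such solution is homogeneous, which is exactly the content of the lemma and is nowhere addressed. On top of this structural problem, the decisive step (H\'enaut's formula applied to a normal form with roughly seven residual parameters, in each stratum of the tangent-cone factorization) is only announced, with ``I expect'' and a computer-algebra fallback standing in for the verification; as it stands this is a programme, not a proof.

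The paper's argument is entirely different and avoids the computation. Writing $\omega=\theta_3+C_3(x,y)(x\mathrm{d}y-y\mathrm{d}x)$ at $m$, it degenerates $\F$ by the homotheties $(\varepsilon x,\varepsilon y)$ onto the homogeneous foliation $\mathcal{H}$ defined by the saturated $3$-jet $\theta_3$; since flatness is a closed condition, $\Leg\mathcal{H}$ is flat, so $\mathcal{H}$ is one of the eleven foliations of \cite[Theorem~5.1]{BM17} and therefore possesses a non-degenerate singularity $m_0$ with $\mathrm{BB}(\mathcal{H},m_0)\notin\{4,\tfrac{16}{3}\}$. Stability of non-degenerate singularities and local constancy of the \textsc{Baum--Bott} invariant transport $m_0$ back to a singularity $m'$ of $\F$ with the same invariant, and \cite[Lemma~6.7]{BM17} then produces an invariant line through $m'$ distinct from $(mm')$, whence $\F$ is homogeneous by Remark~\ref{rem:caract�risation-feuilletage-homog�ne}. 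If you want to keep a computational route, you would at minimum have to reorganize it so that the homogeneous solutions are identified rather than excluded.
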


\begin{rem}\label{rem:caractérisation-feuilletage-homogène}
Let us note that a foliation of degree $d$ on $\pp$ is homogeneous if and only if it has a singularity of maximal algebraic multiplicity ({\it i.e.} equal to $d$) and an invariant line not passing through this singularity.
\end{rem}

\begin{proof}
Let us choose a system of homogeneous coordinates $[x:y:z]\in\pp$ in which $m=[0:0:1]$. The condition $\nu(\F,m)=3$ assures that every $1$-form $\omega$ defining $\F$ in the affine chart $(x,y)$ is of type $\omega=\theta_3+C_3(x,y)(x\mathrm{d}y-y\mathrm{d}x)$, where $\theta_3$ (resp. $C_3$) is a homogeneous $1$-form (resp. a homogeneous polynomial) of degree $3$; the $1$-form $\theta_3$ represents the $3$-jet of $\omega$ at $(0,0).$

\noindent Let us assume that $\theta_3$ is saturated; we will prove that $\F$ is necessarily homogeneous. Let us denote by $\mathcal{H}$ the homogeneous foliation of degree three on $\pp$ defined by $\theta_3$; $\mathcal{H}$ is well defined thanks to the hypothesis on $\theta_3.$ Let us consider the family of homotheties $\varphi=\varphi_{\varepsilon}=(\varepsilon\,x,\varepsilon\hspace{0.1mm}y).$ We have
$$
\frac{1}{\varepsilon^4}\varphi^*\omega=\theta_3+\varepsilon\,C_3(x,y)(x\mathrm{d}y-y\mathrm{d}x)
$$
which tends to $\theta_3$ as $\varepsilon$ tends to $0$; it follows that $\mathcal{H}\in\overline{\mathcal{O}(\F)}.$ The $3$-web $\Leg\F$ is by hypothesis flat; it is therefore the same for the $3$-web $\Leg\mathcal{H}.$ The foliation $\mathcal{H}$ is then linearly conjugated to one of the eleven homogeneous foliations given by Theorem~5.1~of~\cite{BM17}. Thus, according to \cite[Table 1]{BM17}, $\mathcal{H}$ has at least one non-degenerate singularity $m_0$ satisfying $\mathrm{BB}(\mathcal{H},m_0)\not\in\{4,\frac{16}{3}\}.$ Let $(\F_{\varepsilon})_{\varepsilon\in\mathbb{C}}$ be the family of foliations defined by $\omega_{\varepsilon}=\theta_3+\varepsilon\,C_3(x,y)(x\mathrm{d}y-y\mathrm{d}x).$ From what precedes, for $\varepsilon\neq0$ the foliation $\F_{\varepsilon}$ belongs to $\mathcal{O}(\F)$ and for $\varepsilon=0$ we have $\F_{\varepsilon=0}=\mathcal{H}.$ The singularity $m_0$ of $\mathcal{H}$ is \og stable\fg; there is a family $(m_\varepsilon)_{\varepsilon\in\mathbb{C}}$ of non-degenerate singularities of $\F_{\varepsilon}$ such that $m_{\varepsilon=0}=m_0.$ The $\F_\varepsilon$'s being conjugated for $\varepsilon\neq 0$, $\mathrm{BB}(\F_{\varepsilon},m_\varepsilon)$ is locally constant; as a result $\mathrm{BB}(\F_{\varepsilon},m_\varepsilon)=\mathrm{BB}(\mathcal{H},m_0)$ for $\varepsilon$ small. In particular $\F$ has a non-degenerate singularity $m'$ verifying $\mathrm{BB}(\F,m')=\mathrm{BB}(\mathcal{H},m_0)$ so that $\mathrm{BB}(\F,m')\not\in\{4,\frac{16}{3}\}.$ According to~\cite[Lemma~6.7]{BM17} through the point $m'$ pass exactly two lines invariant by $\F$, of which at least one is necessarily distinct from $(mm')$; this implies, according to Remark~\ref{rem:caractérisation-feuilletage-homogène}, that $\F$ is homogeneous.
\end{proof}

\begin{lem}\label{lem:v=3-mu<13-jet-non-saturé}
{\sl Let $\mathcal{F}$ be a foliation of degree three on $\pp$ with a singular point $m$ of algebraic multiplicity~$3.$ Let $\omega$ be a $1$-form describing~$\F$. Assume that the singular locus of $\F$ is not reduced to $m$ and that the~$3$-jet of $\omega$ at $m$ is not saturated. Then up to isomorphism $\omega$ is of the following type
$$
y(a_0\,x^2+a_1xy+y^2)\mathrm{d}x+xy(b_0\,x+b_1y)\mathrm{d}y+x(x^2+c_1xy+c_2y^2)(x\mathrm{d}y-y\mathrm{d}x),
$$
where $a_0,a_1,b_0,b_1,c_1,c_2$ are complex numbers such that the degree of the associated foliation is $3.$
}
\end{lem}

\begin{proof}
The condition $\nu(\F,m)=3$ assures the existence of a system of homogeneous coordinates $[x:y:z]\in\pp$ in which $m=[0:0:1]$ and $\F$ is defined by a $1$-form $\omega$ of type $$\omega=A(x,y)\mathrm{d}x+B(x,y)\mathrm{d}y+C(x,y)(x\mathrm{d}y-y\mathrm{d}x),$$ where $A,$ $B$ and $C$ are homogeneous polynomials of degree $3.$ Since $J^3_{(0,0)}\omega$ is by hypothesis not saturated, we can write
\begin{align*}
&A(x,y)=(h_0\,x+h_1y)(a_0\,x^2+a_1xy+a_2y^2)&&\hspace{1.5mm} \text{and}  \hspace{1.5mm}&& B(x,y)=(h_0\,x+h_1y)(b_0\,x^2+b_1xy+b_2y^2).
\end{align*}
Let us write $C(x,y)=\sum_{i=0}^{3}c_i\hspace{0.1mm}x^{3-i}y^{i}$. The hypothesis $\Sing\F\neq\{m\}$ allows us to assume that the point $m'=[0:1:0]$ is singular of $\F,$ which amounts to assuming that $c_3=h_1b_2=0$. The foliation $\F$ being of degree three the product $h_1a_2$ is non-zero and as a result $b_2=0$; replacing $h_0=h_0'h_1,\,a_i=a_i'a_2,\,b_i=b_i'a_2,\,c_{\hspace{-0.4mm}j}=c_{\hspace{-0.4mm}j}'h_1a_2$, with $i\in\{0,1\}$ and $j\in\{0,1,2\}$, we can assume that $h_1=a_2=1.$ Thus $\omega$ writes
\begin{align*}
&\omega=(h_0\,x+y)\left((a_0\,x^2+a_1xy+y^2)\mathrm{d}x+x(b_0\,x+b_1y)\mathrm{d}y\right)+x(c_0\,x^2+c_1xy+c_2y^2)(x\mathrm{d}y-y\mathrm{d}x).
\end{align*}

\noindent The conjugation by the diffeomorphism~$\left(x,y-h_0\,x\right)$ allows us to cancel $h_0$; as a consequence
\begin{align*}
&\omega=y\left((a_0\,x^2+a_1xy+y^2)\mathrm{d}x+x(b_0\,x+b_1y)\mathrm{d}y\right)+x(c_0\,x^2+c_1xy+c_2y^2)(x\mathrm{d}y-y\mathrm{d}x).
\end{align*}
The equality $\deg\F=3$ implies that $c_0\neq0$. By conjugating $\omega$ by the homothety $\left(\frac{1}{c_0}x,\frac{1}{c_0}y\right),$ we reduce ourselves to $c_0=1$, that is, to the announced model.
\end{proof}

\begin{lem}\label{lem:nu(F,m)=d}
{\sl Let $\mathcal{F}$ be a foliation of degree $d\geq2$ on $\pp.$ If $m\in\Sing\F$ is such that $\nu(\F,m)=d$, then for any $m'\in\Sing\F\setminus\{m\}$ we have~$\nu(\F,m')\leq d-1.$
}
\end{lem}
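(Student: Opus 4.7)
The plan is to combine two standard ingredients: the Darboux--Jouanolou formula
\[
\sum_{s \in \Sing\F} \mu(\F, s) = d^2 + d + 1
\]
valid for any degree $d$ foliation on $\pp$ with isolated singularities, together with the local inequality $\mu(\F, s) \geq \nu(\F, s)^2$ at every such singularity. Once both ingredients are in place, the conclusion will follow from a short numerical comparison.

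For the local inequality I would choose a local vector field $\X = A\frac{\partial}{\partial u} + B\frac{\partial}{\partial v}$ defining $\F$ near $s$ in affine coordinates $(u, v)$ centered at $s$. By definition $\mu(\F, s) = \dim_\C \mathcal{O}_s / \langle A, B\rangle$ is finite, so the germs $\{A = 0\}$ and $\{B = 0\}$ share no common irreducible component at $s$ and $\mu(\F, s)$ coincides with their intersection multiplicity at $s$. The classical B\'ezout-type estimate for intersection multiplicities of plane curves then yields
\[
\mu(\F, s) \geq \nu(A, s) \cdot \nu(B, s) \geq \min\bigl(\nu(A, s),\,\nu(B, s)\bigr)^2 = \nu(\F, s)^2.
\]

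Applying this with $s = m$ gives $\mu(\F, m) \geq d^2$, hence for every other singularity $m' \in \Sing\F\setminus\{m\}$ we obtain
\[
\nu(\F, m')^2 \leq \mu(\F, m') \leq (d^2 + d + 1) - d^2 = d + 1.
\]
It only remains to check that $\sqrt{d+1} \leq d - 1$ for every $d \geq 2$: for $d = 2$ one has $\nu(\F, m')^2 \leq 3$, which forces $\nu(\F, m') \leq 1 = d - 1$, while for $d \geq 3$ the inequality $(d-1)^2 \geq d + 1$ reduces to $d(d-3) \geq 0$, which holds. Therefore $\nu(\F, m') \leq d - 1$, as required. The only ingredient that might deserve some care in the write-up is the plane-curves intersection inequality $\mu(\F, s) \geq \nu(\F, s)^2$, but it is completely classical; the rest is pure assembly.
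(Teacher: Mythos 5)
Your proof is correct and follows essentially the same route as the paper's: both combine the inequality $\nu(\F,s)^{2}\leq\nu(A,s)\cdot\nu(B,s)\leq\mu(\F,s)$ with the global formula $\sum_{s}\mu(\F,s)=d^{2}+d+1$ and then conclude by the same numerical comparison. Your explicit treatment of the case $d=2$ via integrality of $\nu(\F,m')$ is a small but welcome refinement of the paper's one-line deduction.
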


\begin{proof}
We know (\emph{see} for instance \cite[page 158]{Fis01}) that if $s$ is a singularity of $\F$ and if $\mathrm{X}=A(\mathrm{u},\mathrm{v})\frac{\partial{}}{\partial{\mathrm{u}}}+B(\mathrm{u},\mathrm{v})\frac{\partial{}}{\partial{\mathrm{v}}}$ is a vector field defining the germ of $\F$ at $s$, then $\nu(\F,s)^{2}\leq\nu(A,s)\cdot\nu(B,s)\leq\mu(\F,s)$. We also know (\emph{see} \cite{Bru00}) that $\sum\limits_{s\in\mathrm{Sing}\F}\mu(\F,s)=d^{2}+d+1.$ Let us assume now that there is $m\in\Sing\F$ such that $\nu(\F,m)=d$; let $m'$ be a point of $\Sing\F\setminus\{m\}.$ It follows that
$$\nu(\F,m)^2+\nu(\F,m')^2\leq d^{2}+d+1\Longrightarrow\nu(\F,m')\leq\sqrt{d+1}\Longrightarrow\nu(\F,m')\leq d-1,\hspace{1mm}\text{because}\hspace{1mm} d\geq2.$$
\end{proof}

\begin{lem}\label{lem:v=3-mu<13-leg-plat}
{\sl Let $\mathcal{F}$ be a foliation of degree three on $\pp$ with a singular point $m$ of algebraic multiplicity~$3.$ Assume that the singular locus of $\F$ is not reduced to $m$ and that the $3$-web $\Leg\F$ is flat. Then any singularity $m'$ distinct from $m$ is non-degenerate, the line $(mm')$ is $\F$-invariant and
\begin{itemize}
  \item [--] either $\F$ is homogeneous;
  \item [--] or $\mathrm{CS}(\F,(mm'),m')\in\{1,3\}$ for any $m'\in\Sing\F\smallsetminus\{m\}.$
\end{itemize}
}
\end{lem}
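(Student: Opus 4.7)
The plan proceeds in three substantive steps, corresponding to the three parts of the conclusion. First, for the non-degeneracy of any $m'\in\Sing\F\smallsetminus\{m\}$: Lemma~\ref{lem:nu(F,m)=d} applied with $d=3$ gives $\nu(\F,m')\le 2$, and if $m'$ were degenerate ($\mu(\F,m')\ge 2$), then $\F\in\mathbf{FP}(3)$ would have a degenerate singularity of algebraic multiplicity at most $2$, contradicting Proposition~\ref{pro:cas-nilpotent-noeud-ordre-2}. Hence $\mu(\F,m')=1$.

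Second, for the invariance of $\ell:=(mm')$: assume for contradiction that $\ell$ is not $\F$-invariant, so that the total tangency of $\F$ along $\ell$ equals $\deg\F=3$. In a local chart at $m$ let $f$ be a linear local equation of $\ell$ and let $\X$ be a vector field defining $\F$ at $m$; both components of $\X$ lie in $\mathfrak{m}_m^{\,3}$ since $\nu(\F,m)=3$. Then $\X(f)\in\mathfrak{m}_m^{\,3}$ while $f\in\mathfrak{m}_m$, so $\Tang(\F,\ell,m)=\dim_\mathbb{C}\mathcal{O}_m/\langle f,\X(f)\rangle\ge 3$. A parallel computation at $m'$ uses only $\X(m')=0$ to give $\Tang(\F,\ell,m')\ge 1$. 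Summing the tangencies yields $\ge 4>3$, a contradiction; hence $\ell$ is $\F$-invariant.

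Third, the dichotomy is handled by Lemma~\ref{lem:v=3-leg-plat} applied at $m$: either $\F$ is homogeneous, giving the first alternative of the statement, or the $3$-jet of $\omega$ at $m$ is not saturated. In the latter case, Lemma~\ref{lem:v=3-mu<13-jet-non-satur�} supplies an affine chart with $m=[0:0:1]$, $m'=[0:1:0]$, $\ell=\{x=0\}$, and an explicit normal form of $\omega$ depending on six parameters $(a_0,a_1,b_0,b_1,c_1,c_2)$. Passing to the chart $y=1$ and linearising a local vector field representative of $\F$ at $m'$, I would express $\mathrm{CS}(\F,\ell,m')$ as an explicit rational function of those parameters. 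I would then impose flatness of $\Leg\F$ via H\'enaut's formula~\eqref{equa:Formule-Henaut} applied to the cubic implicit equation defining $\Leg\F$, and show that every parameter tuple in the resulting vanishing locus forces $\mathrm{CS}(\F,\ell,m')\in\{1,3\}$. The argument at each further singularity of $\F$ lying on $\ell$ is identical, since $\ell$ is invariant and all its singularities of $\F$ are non-degenerate.

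The main obstacle is the last step: H\'enaut's formula on a six-parameter family of cubic dual webs produces a rational $2$-form whose numerator is a large polynomial, and organising the resulting flatness system so as to cleanly extract the two admissible values of the Camacho--Sad index is algebraically heavy. In practice this computation would be carried out with a symbolic computation package.
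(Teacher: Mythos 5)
Your first two steps are correct and coincide with the paper's own argument. The non-degeneracy of $m'$ is obtained exactly as you do it, from Lemma~\ref{lem:nu(F,m)=d} combined with Proposition~\ref{pro:cas-nilpotent-noeud-ordre-2}. For the invariance of $(mm')$ the paper phrases the bound through the invariant $\tau(\F,m)$, which equals $3$ because $\nu(\F,m)=3$, but your local computation with $\X(f)\in\mathfrak{m}_m^{\,3}$ proves precisely the inequality $\Tang(\F,(mm'),m)\geq 3$ that is needed, so this is the same proof in different clothing.

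The gap is in the third step, which is where the actual content of the lemma lies. What you offer there is a programme, not a proof: in the normal form one computes $\mathrm{CS}(\F,(mm'),m')=1+b_1$, so your announced conclusion (flatness of the dual $3$-web forces $\mathrm{CS}\in\{1,3\}$ on the six-parameter family) is exactly the statement that flatness forces $b_1\in\{0,2\}$, i.e.\ the lemma itself; you neither perform the H\'enaut computation nor give a reason why its vanishing locus should close up this way. Two further problems. First, the claimed outcome of the computation is only true because no homogeneous foliation fits the normal form: a flat homogeneous foliation can have $\mathrm{CS}\notin\{1,3\}$ along the line joining its centre to another singularity (for $\mathcal{H}_{\hspace{0.2mm}2}$ and $m'=[1:1:0]$ one finds $\mathrm{BB}(\mathcal{H}_{\hspace{0.2mm}2},m')=\tfrac{25}{4}$, hence $\mathrm{CS}\notin\{1,3\}$), so you must first check that a homogeneous foliation always has a \emph{saturated} $3$-jet at its multiplicity-$3$ singularity and that saturation survives the projective normalizations used; you do not address this. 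Second, your last sentence treats all singularities $\neq m$ as lying on a single line $\ell$, whereas each $m'$ determines its own line $(mm')$ and the normalization must be redone for each. The paper avoids all of this with a short index-theoretic argument worth comparing with yours: if some $m'$ had $\mathrm{CS}(\F,(mm'),m')\notin\{1,3\}$, then $\mathrm{BB}(\F,m')\neq 4$; if moreover $\mathrm{BB}(\F,m')\neq\frac{16}{3}$, then \cite[Lemma~6.7]{BM17} yields a second invariant line through $m'$, necessarily avoiding $m$, while if $\mathrm{BB}(\F,m')=\frac{16}{3}$, then \cite[Lemma~3.12]{Bed17} yields an invariant line through $m'$ with $\mathrm{CS}=3$, again distinct from $(mm')$; in either case the characterization of homogeneous foliations recalled after Lemma~\ref{lem:v=3-leg-plat} forces $\F$ to be homogeneous. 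Note finally that the paper's logic runs opposite to yours: this lemma is what reduces $b_1$ to $\{0,2\}$ \emph{before} the H\'enaut computation of Proposition~\ref{pro:overline-omega1-omega4-omega5} is launched, and merging the two as you propose makes that computation substantially heavier.
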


\begin{proof}
Let $m'$ be a singular point of $\F$ distinct from $ m.$ According to Lemma~\ref{lem:nu(F,m)=d} the equalities $\deg\F=3$ and $\nu(\F,m)=3$ imply that $\nu(\F,m')\leq2.$ If the singularity $m'$ were degenerate, then, according to Proposition~\ref{pro:cas-nilpotent-noeud-ordre-2}, the $3$-web $\Leg\F$ would not be flat, which is impossible by hypothesis. Therefore $\mu(\F,m')=1.$

\noindent Since $\deg\F=3,$\, $\tau(\F,m)=3$\, and \,$\tau(\F,m')\geq1$, the line $(mm')$ is invariant by $\F$ (otherwise we would have~$3=\deg\F=\sum_{p\in (mm')}\Tang(\F,(mm'),p)\geq\tau(\F,m)+\tau(\F,m')\geq4$).

\noindent Let us assume that it is possible to choose $m'$ in such a way that $\mathrm{CS}(\F,(mm'),m')\not\in\{1,3\}$; we will show that $\F$ is necessarily homogeneous. The equality $\mu(\F,m')=1$ and the condition $\mathrm{CS}(\F,(mm'),m')\neq1$ imply that $\mathrm{BB}(\F,m')\neq4.$
\begin{itemize}
  \item [--] If $\mathrm{BB}(\F,m')\neq\frac{16}{3}$, then, according to \cite[Lemma~6.7]{BM17}, through the point~$m'$ passes a line invariant by $\F$ and distinct from the line $(mm')$, which implies, according to Remark~\ref{rem:caractérisation-feuilletage-homogène}, that $\F$ is homogeneous;
  \item [--] If $\mathrm{BB}(\F,m')=\frac{16}{3},$ then, according to \cite[Lemma~3.12]{Bed17}, through the singularity $m'$ passes a line $\ell$ invariant by $\F$ and such that $\mathrm{CS}(\F,\ell,m')=3$; as we have assumed that $\mathrm{CS}(\F,(mm'),m')\neq3,$ we deduce that $\ell\neq(mm')$, which implies (Remark~\ref{rem:caractérisation-feuilletage-homogène}) that $\F$ is homogeneous.
\end{itemize}
\end{proof}
\bigskip

\noindent We are now able to describe up to isomorphism the foliations of $\mathbf{FP}(3)$ whose singular locus contains a point of algebraic multiplicity $3$ and is not reduced to this point.
\begin{pro}\label{pro:overline-omega1-omega4-omega5}
{\sl Let $\mathcal{F}$ be a foliation of degree three on $\pp.$ Assume that $\F$ has a singularity of algebraic multiplicity~$3$ and that $\Sing\F$ is not reduced to this singularity. Assume moreover that the $3$-web $\Leg\F$ is flat. Then either $\F$ is homogeneous, or $\F$ is, up to the action of an automorphism of $\pp,$ defined by one of the following $1$-forms
\begin{itemize}
\item [\texttt{1. }] $\omegaoverline_{1}=y^{3}\mathrm{d}x+x^{3}(x\mathrm{d}y-y\mathrm{d}x)$;

\item [\texttt{2. }] $\omegaoverline_{4}=(x^{3}+y^{3})\mathrm{d}x+x^{3}(x\mathrm{d}y-y\mathrm{d}x)$;

\item [\texttt{3. }] $\omegaoverline_{5}=y^{2}(y\mathrm{d}x+2x\mathrm{d}y)+x^{3}(x\mathrm{d}y-y\mathrm{d}x)$.
\end{itemize}
}
\end{pro}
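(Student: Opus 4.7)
Suppose that $\F$ is not homogeneous. By Lemma~\ref{lem:v=3-leg-plat} the $3$-jet of $\omega$ at $m$ is not saturated, so Lemma~\ref{lem:v=3-mu<13-jet-non-satur�} provides affine coordinates $(x,y)$ in which
\[
\omega=y(a_0x^2+a_1xy+y^2)\,\mathrm{d}x+xy(b_0x+b_1y)\,\mathrm{d}y+x(x^2+c_1xy+c_2y^2)(x\,\mathrm{d}y-y\,\mathrm{d}x),
\]
with $m=[0:0:1]$ the triple singularity and $m'=[0:1:0]\in\Sing\F$. A direct computation shows that $x$ divides $\omega\wedge\mathrm{d}x$, so the line $(mm')=\{x=0\}$ is $\F$-invariant (in agreement with Lemma~\ref{lem:v=3-mu<13-leg-plat}).

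The next step is to whittle down the six parameters $a_0,a_1,b_0,b_1,c_1,c_2$ by the geometric constraints coming from Lemma~\ref{lem:v=3-mu<13-leg-plat}. I would compute $\mathrm{CS}(\F,(mm'),m')$ in a local chart at $m'$ and require it to belong to $\{1,3\}$, obtaining one polynomial relation on the parameters. Then I would locate the remaining singularities of $\F$: since $\nu(\F,m)^2\leq\mu(\F,m)$ (Lemma~\ref{lem:nu(F,m)=d}) and $\sum_s\mu(\F,s)=13$, with each additional singularity forced to be non-degenerate, only finitely many configurations of extra singular points $m''$ can occur. For every such $m''$ the line $(mm'')$ must be $\F$-invariant with $\mathrm{CS}(\F,(mm''),m'')\in\{1,3\}$, which yields further polynomial constraints. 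The residual action of $\mathrm{Aut}(\pp)$ fixing $m$ and $m'$ (diagonal scalings and shears $(x,y)\mapsto(x,y+\alpha x)$ that preserve the shape of the normal form) then allows one to normalize several coefficients, reducing the discussion to a short list of candidate strata indexed by the configuration of invariant lines through $m$.

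On each remaining stratum I would apply H\'enaut's formula~\eqref{equa:Formule-Henaut} to the implicit cubic $A(x,px-q)+pB(x,px-q)=0$ defining $\Leg\F$, and impose $K(\Leg\F)\equiv 0$. The resulting polynomial system, combined with the Camacho-Sad constraints already obtained, should single out exactly three orbits, namely those of $\omegaoverline_1,\omegaoverline_4,\omegaoverline_5$; a direct computation symmetric to the one carried out in the proof of Proposition~\ref{pro:overline-omega2} will then confirm that each of these three $1$-forms does yield a flat Legendre transform.

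The main obstacle I foresee is the bulk of the H\'enaut computation: the resultant $R$ and the minors $\alpha_1,\alpha_2$ are polynomial expressions of high total degree simultaneously in $(p,q)$ and in the six parameters, so the numerator of $K(\Leg\F)$ is unwieldy. To extract the flatness conditions efficiently one must exploit the Camacho-Sad and singularity-configuration constraints early, reducing to as few free parameters as possible before comparing coefficients of powers of $p$ and $q$ in the numerator; careful bookkeeping is also needed to ensure that no spurious solutions slip through and that no component of the flatness locus is missed when splitting the analysis according to the positions and local types of the extra singularities.
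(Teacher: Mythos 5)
Your plan coincides with the paper's own proof: after Lemma~\ref{lem:v=3-leg-plat} and the normal-form lemma that follows it, the paper computes $\mathrm{CS}(\F,(mm'),m')=1+b_1$, so that Lemma~\ref{lem:v=3-mu<13-leg-plat} forces $b_1\in\{0,2\}$, then uses the residual automorphisms to reduce to six parameter configurations $(b_0,b_1,c_1,c_2)$ and applies H\'enaut's formula in each, exactly as you outline. The only difference is that the paper needs no further Camacho--Sad constraints at additional singularities: the single condition $b_1\in\{0,2\}$ together with the normalizations already yields six tractable cases, and the substance of the proof is precisely the explicit case-by-case curvature computations that you defer.
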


\begin{proof}
\noindent Let us assume that $\F$ is not homogeneous; we must show that up to linear conjugation $\F$ is described by one of the three $1$-forms $\omegaoverline_{1},\omegaoverline_{4},\omegaoverline_{5}$.

\noindent Let us denote by $m$ the singularity of $\F$ of algebraic multiplicity~$3.$ Let $\omega$ be a $1$-form describing $\F$ in an affine chart $(x,y)$ of $\pp.$ Since by hypothesis $\Leg\F$ is flat, it follows, according to Lemma~\ref{lem:v=3-leg-plat}, that the~$3$-jet of $\omega$ at $m$ is not saturated. By hypothesis we have $\Sing\F\neq\{m\}$. As a result, Lemma~\ref{lem:v=3-mu<13-jet-non-saturé} assures us that $\omega$ is, up to isomorphism, of the following type   $$
y(a_0\,x^2+a_1xy+y^2)\mathrm{d}x+xy(b_0\,x+b_1y)\mathrm{d}y+x(x^2+c_1xy+c_2y^2)(x\mathrm{d}y-y\mathrm{d}x),\quad a_i,b_i,c_{\hspace{-0.4mm}j}\in\mathbb{C}.
$$
In this situation, $m=[0:0:1]$, $m':=[0:1:0]\in\Sing\F$ and the line $(mm')=(x=0)$ is invariant by $\F$; moreover, a straightforward computation shows that $\mathrm{CS}(\F,(mm'),m')=1+b_1.$ Lemma~\ref{lem:v=3-mu<13-leg-plat} then implies that $b_1\in\{0,2\}.$

\noindent If $b_0\neq0$,\hspace{2mm} resp. $(b_0,b_1)=(0,2)$,\hspace{2mm} resp. $b_0=b_1=0,c_2\neq0$,\hspace{2mm} resp. $b_0=b_1=c_2=0,c_1\neq0$, then by conjugating $\omega$ by
\begin{small}
\begin{align*}
&\left(\dfrac{b_0^2\,x}{1-c_1b_0\,x},\dfrac{b_0^3y}{1-c_1b_0\,x}\right),&&
\text{resp.}\hspace{1mm}
\left(\frac{x}{1-\left(\dfrac{c_2}{2}\right)x},\frac{y}{1-\left(\dfrac{c_2}{2}\right)x}\right),&&
\text{resp.}\hspace{1mm}
\left(c_2^{-1}x,c_2^{-3/2}\,y\right),&&
\text{resp.}\hspace{1mm}
\left(c_1^{-2}x,c_1^{-3}y\right),
\end{align*}
\end{small}
\hspace{-1mm}we reduce ourselves to $(b_0,c_1)=(1,0)$,\hspace{2mm} resp. $c_2=0$,\hspace{2mm} resp. $c_2=1$,\hspace{2mm} resp. $c_1=1.$ Therefore, it suffices us to treat the following possibilities
\begin{align*}
& (b_0,b_1,c_1)=(1,0,0),&&\hspace{4mm} (b_0,b_1,c_1)=(1,2,0),&&\hspace{4mm} (b_0,b_1,c_2)=(0,2,0),&&\\
& (b_0,b_1,c_2)=(0,0,1),&&\hspace{4mm} (b_0,b_1,c_1,c_2)=(0,0,1,0),&&\hspace{4mm} (b_0,b_1,c_1,c_2)=(0,0,0,0).
\end{align*}

\noindent Let us place ourselves in the affine chart $(p,q)$ of $\pd$ associated to the line $\{py-qx=1\}\subset\pp$; the $3$-web $\Leg\F$ is described by the differential equation $$F(p,q,w):=pw^3+(a_1p+b_1q-c_2)w^2+(a_0p+b_0q-c_1)w-1=0,\qquad \text{with} \qquad w=\frac{\mathrm{d}q}{\mathrm{d}p}.$$

\noindent The explicit computation of $K(\Leg\F)$ shows that it has the form $$K(\Leg\F)=\frac{\sum\limits_{i+j\leq6}\rho_{i}^{j}p^iq^j}{\Delta(p,q)^2}\mathrm{d}p\wedge\mathrm{d}q,$$
where $\Delta$ is the $w$-discriminant of $F$ and the $\rho_{i}^{j}$'s are polynomials in the parameters $a_i,b_i,c_{\hspace{-0.4mm}j}.$
\vspace{2mm}

\textbf{\textit{1.}} If $(b_0,b_1,c_1)=(1,0,0)$, then the explicit computation of $K(\Leg\F)$ leads to $\rho_{0}^{5}=4c_2$ and
\begin{small}
$$
\rho_{2}^{1}=a_0^2(a_0+8a_1)c_2^3-(177a_0^2-60a_0-24a_0a_1-84a_0a_1^2+24a_1^2+24a_1^3)c_2^2+(108a_1^2-36a_1-81a_0)c_2+81,
$$
\end{small}
\hspace{-1mm}so that the system $\rho_{0}^{5}=\rho_{2}^{1}=0$ has no solutions. So this first case does not happen.
\vspace{2mm}

\textbf{\textit{2.}} When $(b_0,b_1,c_1)=(1,2,0)$, the explicit computation of $K(\Leg\F)$ gives us:
\begin{small}
\begin{align*}
\begin{array}{lll}
\vspace{1mm}
\rho_{0}^{6}=-4(6a_0-5a_1+4),\\
\vspace{1mm}
\rho_{1}^{5}=-4(12a_0^2+20a_0-10a_0a_1-3a_1^2),\\
\rho_{5}^{0}=32a_0^5c_2-8(a_1^2c_2+2a_1c_2+12)a_0^4+4(a_1^3c_2+4a_1^2+a_1-12)a_0^3+(4a_1^2-5a_1+30)a_0^2a_1^2-4a_0a_1^4;
\end{array}
\end{align*}
\end{small}
\hspace{-1mm}it is easy to see that the system $\rho_{0}^{6}=\rho_{1}^{5}=\rho_{5}^{0}=0$ has no solutions. So this second case is not possible.
\vspace{2mm}

\textbf{\textit{3.}} If $(b_0,b_1,c_2)=(0,2,0)$, then the explicit computation of $K(\Leg\F)$ shows that:
\begin{align*}
&\rho_{1}^{0}=24c_1^4,&&\rho_{1}^{4}=-256a_0^2,&&\rho_{0}^{4}=64(14a_1+3a_0c_1),
\end{align*}
so that $a_0=a_1=c_1=0.$ As a consequence, in this third case, $\F$ is given by $$\omegaoverline_{5}=y^2(y\mathrm{d}x+2x\mathrm{d}y)+x^3(x\mathrm{d}y-y\mathrm{d}x);$$
we verify by computation that its \textsc{Legendre} transform is flat.
\vspace{2mm}

\textbf{\textit{4.}} When $(b_0,b_1,c_2)=(0,0,1)$, the explicit computation of $K(\Leg\F)$ gives us:
\begin{Small}
\begin{align*}
\begin{array}{llll}
\vspace{1mm}
\rho_{5}^{0}=2a_0^2(4a_0-a_1^2)(2a_0^2-6a_0-a_0a_1c_1+2a_1^2),\\
\vspace{1mm}
\rho_{1}^{0}=2\left(c_1^2-4\right)\left((12a_0-a_1^2)c_1^2-(5a_0+18)a_1c_1+a_0^2-24a_0+14a_1^2+27\right),\\
\vspace{1mm}
\rho_{4}^{0}=(24a_0-5a_1^2)a_0^2a_1c_1^2-(60a_0^3-36a_0^2-11a_0^2a_1^2+45a_0a_1^2-8a_1^4)a_0c_1+2a_1(2a_0^4+24a_0^3-54a_0^2-5a_0^2a_1^2+30a_0a_1^2-4a_1^4),\\
\rho_{2}^{0}=(8a_0-a_1^2)a_1c_1^4-(64a_0^2+60a_0-7a_0a_1^2+7a_1^2)c_1^3+3(5a_0^2+52a_0-2a_1^2+27)a_1c_1^2-2a_1(a_0^2+162a_0-48a_1^2-27)\\
\hspace{6.5mm}-(a_0^3-177a_0^2-81a_0+84a_0a_1^2+108a_1^2+81)c_1;
\end{array}
\end{align*}
\end{Small}
\hspace{-1mm}the system $\rho_{1}^{0}=\rho_{2}^{0}=\rho_{4}^{0}=\rho_{5}^{0}=0$ is equivalent to $(a_0,a_1,c_1)\in\{(1,2,2),(1,-2,-2)\}$, as an explicit computation shows. If $(a_0,a_1,c_1)=(1,2,2)$, resp. $(a_0,a_1,c_1)=(1,-2,-2)$, then $\omega$ writes
\begin{align*}
&\omega=(x+y)^2(y\mathrm{d}x+x(x\mathrm{d}y-y\mathrm{d}x)),&&
\text{resp.}\hspace{1mm}
\omega=(x-y)^2(y\mathrm{d}x+x(x\mathrm{d}y-y\mathrm{d}x)),
\end{align*}
which contradicts the equality $\deg\F=3$.
\vspace{2mm}

\textbf{\textit{5.}} When $(b_0,b_1,c_1,c_2)=(0,0,1,0)$, the explicit computation of $K(\Leg\F)$ shows that:
\begin{small}
\begin{align*}
&\rho_{3}^{0}=-2a_0^2(6a_0+a_0a_1-2a_1^2)(4a_0-a_1^2),&& \rho_{0}^{0}=-81-60a_0+8a_0a_1+81a_1-7a_1^2-a_1^3,\\
&\rho_{1}^{0}=-4(a_1-3)(6a_0^2-9a_0a_1-a_0a_1^2+2a_1^3);
\end{align*}
\end{small}
\hspace{-1mm}the system $\rho_{0}^{0}=\rho_{1}^{0}=\rho_{3}^{0}=0$ is verified if and only if $(a_0,a_1)=(2,3),$ in which case $$\omega=(x+y)\big(y(y+2x)\mathrm{d}x+x^2(x\mathrm{d}y-y\mathrm{d}x)\big),$$ but this contradicts the equality $\deg\F=3.$
\vspace{2mm}

\textbf{\textit{6.}} If $(b_0,b_1,c_1,c_2)=(0,0,0,0),$ then $\omega=y(a_0\,x^2+a_1xy+y^2)\mathrm{d}x+x^3(x\mathrm{d}y-y\mathrm{d}x)$; the differential equation describing $\Leg\F$ writes
$$F(p,q,q')=p(q')^3+a_1p(q')^2+a_0pq'-1=0,\quad \text{with} \quad q'=\frac{\mathrm{d}q}{\mathrm{d}p}.$$
We study two eventualities according to whether $a_1$ is zero or not.
\vspace{2mm}

\textbf{\textit{6.1.}} When $a_1=0$ the explicit computation of $K(\Leg\F)$ gives us $$K(\Leg\F)=-\dfrac{48a_0^4\hspace{0.1mm}p}{(4a_0^3\hspace{0.1mm}p^2+27)^2}\mathrm{d}p\wedge\mathrm{d}q;$$
as a result $\Leg\F$ is flat if and only if $a_0=0$, in which case $\F$ is described by
$$\omegaoverline_{1}=y^3\mathrm{d}x+x^3(x\mathrm{d}y-y\mathrm{d}x).$$

\textbf{\textit{6.2.}} If $a_1\neq0$, then by conjugating $\omega$ by $\left(\alpha^2x,\alpha^3y\right)$, where $\alpha=\frac{1}{3}a_1$, we can assume that $a_1=3.$ In this case the explicit computation of $K(\Leg\F)$ shows that
\begin{align*}
K(\Leg\F)=-\dfrac{12\left(a_0-3\right)\left(a_0^2(4a_0-9)p+27(a_0-2)\right)}{\left(a_0^2(4a_0-9)p^2+54(a_0-2)p+27\right)^2}
\mathrm{d}p\wedge\mathrm{d}q;
\end{align*}
as a consequence $\Leg\F$ is flat if and only if $a_0=3$, in which case $$\omega=y(3x^2+3xy+y^2)\mathrm{d}x+x^3(x\mathrm{d}y-y\mathrm{d}x).$$ After replacing $\omega$ by $\varphi^*\omega,$ where $\varphi(x,y)=\left(x,-x-y\right),$ the foliation $\F$ is given in the affine coordinates $(x,y)$ by the $1$-form
$$\omegaoverline_{4}=(x^{3}+y^{3})\mathrm{d}x+x^{3}(x\mathrm{d}y-y\mathrm{d}x).$$
\end{proof}

\begin{rem}
The five foliations $\mathcal{F}_1,\ldots,\mathcal{F}_{5}$ have the following properties:
\vspace{0.2mm}
\begin{itemize}
\item [(i)]   $\#\Sing\F_2=1,$\, $\#\Sing\F_3=13$\, and \,$\#\Sing\F_j=2$ for $j=1,4,5$;
\item [(ii)]  $\F_j$ is convex if and only if $j\in\{1,3\}$;
\item [(iii)] $\F_j$ has a radial singularity of order $2$ if and only if $j\in\{1,3,4\}$;
\item [(iv)]  $\F_j$ admits a double inflection point if and only if $j\in\{2,4\}.$
\end{itemize}
\vspace{0.2mm}
The verifications of these properties are easy and left to the reader.
\end{rem}

\begin{rem}\label{rem:non-conjugaison-Hi-Fj}
The sixteen foliations $\mathcal{H}_1,$ $\ldots,\mathcal{H}_{11},$ $\mathcal{F}_1,$ $\ldots,\mathcal{F}_{5}$ are not linearly conjugated. Indeed, by construction, the $\F_j$'s are not homogeneous and are therefore not conjugated to the homogeneous foliations~$\mathcal{H}_{\hspace{0.2mm}i}.$ The $\mathcal{H}_{\hspace{0.2mm}i}$'s are not linearly conjugated (\cite[Theorem~5.1]{BM17}). Finally, the fact that the $\F_j$'s are not linearly conjugated follows from the properties (i),~(ii) and (iii) above.
\end{rem}

\noindent Theorem~\ref{thmalph:classification} follows from \cite[Theorems~5.1,~6.1]{BM17}, Propositions~\ref{pro:cas-nilpotent-noeud-ordre-2}, \ref{pro:overline-omega2}, \ref{pro:overline-omega1-omega4-omega5} and Remark~\ref{rem:non-conjugaison-Hi-Fj}.

\begin{proof}[\sl Proof of Corollary~\ref{coralph:class-convexe-3}]
According to \cite[Corollary~4.7]{BFM13} every convex foliation of degree three on $\pp$ has a flat \textsc{Legendre} transform and is therefore linearly conjugated to one of the sixteen foliations given by Theorem~\ref{thmalph:classification}. The statement then follows from the fact that the only convex foliations appearing in this theorem are~$\mathcal{H}_{1},\mathcal{H}_{\hspace{0.2mm}3},\F_1$ and $\F_3.$
\end{proof}

\section{Orbits under the action of $\mathrm{PGL}_3(\C)$}\label{sec:Orbites-Action-PGL-3}

\noindent In this section, we describe the irreducible components of $\mathbf{FP}(3)$. We start by determining the dimensions of the orbits
 $\mathcal{O}(\mathcal{H}_{\hspace{0.2mm}i}),\mathcal{O}(\F_j)$ under the action of $\mathrm{Aut}(\pp)=\mathrm{PGL}_3(\C)$. Next we classify up to isomorphism the foliations of~$\mathbf{F}(3)$ which realize the minimal dimension of the orbits in degree $3$. Finally, we study the closure of the orbits $\mathcal{O}(\mathcal{H}_{\hspace{0.2mm}i}),\mathcal{O}(\F_j)$ in $\mathbf{F}( 3)$ and we prove the Theorem~\ref{thmalph:12-Composantes irréductibles} describing
 the irreducible components of~$\mathbf{FP}(3).$

\Subsection{Isotropy groups and dimensions of the orbits $\mathcal{O}(\mathcal{H}_{\hspace{0.2mm}i})$ and $\mathcal{O}(\F_j)$}

\begin{defin}
Let $\F$ be a foliation on $\pp.$ The subgroup of $\mathrm{Aut}(\pp)$ (resp. $\mathrm{Aut}(\pd)$) which preserves $\F$ (resp. $\Leg\F$) is called the \textsl{isotropy group} of $\F$ (resp. $\Leg\F$) and is denoted by $\mathrm{Iso}(\F)$ (resp. $\mathrm{Iso}(\Leg\F)$); $\mathrm{Iso}(\F)$ and $\mathrm{Iso}(\Leg\F)$ are algebraic groups.
\end{defin}

\begin{rem}
Let $\F$ be a foliation on $\pp.$ If $[a:b:c]$ are the homogeneous coordinates in $\pd$ associated to the line $\{ax+by+cz=0\}\subset\pp,$ then
\begin{align*}
&\mathrm{Iso}(\Leg\F)=\Big\{[a:b:c]\cdot A^{-1}
\hspace{1mm}\big\vert\hspace{1mm}
A\in\mathrm{PGL}_3(\mathbb{C}),\hspace{1mm}[x:y:z]\cdot\transp{A}\in\mathrm{Iso}(\F)
\Big\}.
\end{align*}
More precisely, the isomorphism $\tau\hspace{1mm}\colon\mathrm{Aut}(\pp)\to\mathrm{Aut}(\pd)$ which, for $A$ in $\mathrm{PGL}_3(\mathbb{C}),$ sends $[x:y:z]\cdot\transp{A}$ into $[a:b:c]\cdot A^{-1}$ induces an isomorphism from $\mathrm{Iso}(\F)$ onto $\mathrm{Iso}(\Leg\F).$
\end{rem}

\noindent The following result is elementary and its proof is left to the reader.
\begin{pro}\label{pro:isotropies}
{\sl The groups $\mathrm{Iso}(\mathcal{H}_{\hspace{0.2mm}i})$ and $\mathrm{Iso}(\F_j)$ are given by
\smallskip
\begin{Small}
\begin{itemize}
\item [\texttt{1.}] $\mathrm{Iso}(\mathcal{H}_{1})=\Big\{
                    [\pm\,x:y:\alpha\,z],\hspace{1mm}
                    [\pm\,y:x:\alpha\,z]
                    \hspace{1mm}\big\vert\hspace{1mm}\alpha\in\mathbb{C}^*\Big\}$;
\medskip

\item [\texttt{2.}] $\mathrm{Iso}(\mathcal{H}_{\hspace{0.2mm}2})=\Big\{
                    [\pm\,x:y:\alpha\,z],\hspace{1mm}
                    [\pm\,y:x:\alpha\,z],\hspace{1mm}
                    [\pm\,\mathrm{i}\,x:y:\alpha\,z],\hspace{1mm}
                    [\pm\,\mathrm{i}y:x:\alpha\,z]
                    \hspace{1mm}\big\vert\hspace{1mm}\alpha\in\mathbb{C}^*\Big\}$;
\medskip

\item [\texttt{3.}] $\mathrm{Iso}(\mathcal{H}_{\hspace{0.2mm}3})=\Big\{[\,x:y:\alpha\,z],\hspace{1mm}[\,y:x:\alpha\,z]
                    \hspace{1mm}\big\vert\hspace{1mm}\alpha\in\mathbb{C}^*\Big\}$;
\medskip

\item [\texttt{4.}] $\mathrm{Iso}(\mathcal{H}_{\hspace{0.2mm}4})=\Big\{[\,x:y:\alpha\,z],\hspace{1mm}[\,y:x:\alpha\,z]
                    \hspace{1mm}\big\vert\hspace{1mm}\alpha\in\mathbb{C}^*\Big\}$;
\medskip

\item [\texttt{5.}] $\mathrm{Iso}(\mathcal{H}_{\hspace{0.2mm}5})=\Big\{[\,x:y:\alpha\,z]
                    \hspace{1mm}\big\vert\hspace{1mm}\alpha\in\mathbb{C}^*\Big\}$;
\medskip

\item [\texttt{6.}] $\mathrm{Iso}(\mathcal{H}_{\hspace{0.2mm}6})=\Big\{[\,x:y:\alpha\,z]
                    \hspace{1mm}\big\vert\hspace{1mm}\alpha\in\mathbb{C}^*\Big\}$;
\medskip

\item [\texttt{7.}] $\mathrm{Iso}(\mathcal{H}_{\hspace{0.2mm}7})=\Big\{
                    [\pm\,x:y:\alpha\,z]
                    \hspace{1mm}\big\vert\hspace{1mm}\alpha\in\mathbb{C}^*\Big\}$;
\medskip

\item [\texttt{8.}] $\mathrm{Iso}(\mathcal{H}_{\hspace{0.2mm}8})=\Big\{
                    [\,x:y:\alpha\,z],\hspace{1mm}
                    [4y-x:y:\alpha\,z]
                    \hspace{1mm}\big\vert\hspace{1mm}\alpha\in\mathbb{C}^*\Big\}$;
\medskip

\item [\texttt{9.}] $\mathrm{Iso}(\mathcal{H}_{\hspace{0.2mm}9})=\Big\{
                    [\,x:y:\alpha\,z],\hspace{1mm}
                    [\,x-y:x:\alpha\,z],\hspace{1mm}
                    [y:y-x:\alpha\,z]
                    \hspace{1mm}\big\vert\hspace{1mm}\alpha\in\mathbb{C}^*\Big\}$;
\medskip

\item [\texttt{10.}] $\mathrm{Iso}(\mathcal{H}_{\hspace{0.2mm}10})=\Big\{
                    [\,x:y:\alpha\,z],\hspace{1mm}
                    [-y:x:\alpha\,z]
                    \hspace{1mm}\big\vert\hspace{1mm}\alpha\in\mathbb{C}^*\Big\}$;
\medskip

\item [\texttt{11.}] $\mathrm{Iso}(\mathcal{H}_{\hspace{0.2mm}11})=\Big\{
                     [\,x:y:\alpha\,z],\hspace{1mm}
                     [y:x:\alpha\,z],\hspace{1mm}
                     [\xi^5\,x:x+\xi\,y:\alpha\,z],\hspace{1mm}
                     [\xi^{-5}\,x:x+\xi^{-1}\,y:\alpha\,z],\hspace{1mm}
                     [\xi^5\,y:y+\xi\,x:\alpha\,z],$
\item[]$
                     \hspace{2.13cm}
                     [\xi^{-5}\,y:y+\xi^{-1}\,x:\alpha\,z],\hspace{1mm}
                     [\xi^{5}\,x-y:x+\xi^{-1}\,y:\alpha\,z],\hspace{1mm}
                     [\xi^{-5}\,x-y:x+\xi\,y:\alpha\,z],$
\item[]$
                     \hspace{2.13cm}
                     [\xi^{5}\,x+\xi^{4}\,y:x:\alpha\,z],\hspace{1mm}
                     [\xi^{-5}\,x+\xi^{-4}\,y:x:\alpha\,z],\hspace{1mm}
                     [\xi^{5}\,y+\xi^{4}\,x:y:\alpha\,z],$
\item[]$
                     \hspace{2.13cm}
                     [\xi^{-5}\,y+\xi^{-4}\,x:y:\alpha\,z]
                     \hspace{1mm}\big\vert\hspace{1mm}\alpha\in\mathbb{C}^*\Big\}$ where $\mathrm{\xi}=\mathrm{e}^{\mathrm{i}\pi/6}$;
\medskip

\item [\texttt{12.}] $\mathrm{Iso}(\mathcal{F}_{1})=\Big\{
                     [\alpha^{2} x:\alpha^{3} y:z+\beta\,x]
                     \hspace{1mm}\big\vert\hspace{1mm} \alpha\in\mathbb{C}^*,\hspace{1mm}\beta\in\mathbb{C}\Big\}$;
\medskip

\item [\texttt{13.}] $\mathrm{Iso}(\mathcal{F}_{2})=\Big\{
                     [\alpha^{4} x:\alpha^{3} y:z+\beta\,x]
                     \hspace{1mm}\big\vert\hspace{1mm} \alpha\in\mathbb{C}^*,\hspace{1mm}\beta\in\mathbb{C}\Big\}$;
\medskip

\item [\texttt{14.}] $\mathrm{Iso}(\mathcal{F}_{3})=\Big\{
                     [\pm\,x:\pm\,y:z],\hspace{1mm}
                     [\pm\,y:\pm\,x:z],\hspace{1mm}
                     [\pm\,x:\pm\,z:y],\hspace{1mm}
                     [\pm\,z:\pm\,x:y],\hspace{1mm}
                     [\pm\,y:\pm\,z:x],\hspace{1mm}
                     [\pm\,z:\pm\,y:x]\Big\}$;
\medskip

\item [\texttt{15.}] $\mathrm{Iso}(\mathcal{F}_{4})=\Big\{
                     [\,x:y:z+\alpha x],\hspace{1mm}
                     [\,\mathrm{j}\,x:y:z+\alpha\,x],\hspace{1mm}
                     [\,\mathrm{j}^2x:y:z+\alpha\,x]
                     \hspace{1mm}\big\vert\hspace{1mm}\alpha\in\mathbb{C}\Big\}$ where $\mathrm{j}=\mathrm{e}^{2\mathrm{i}\pi/3}$;
\medskip

\item [\texttt{16.}] $\mathrm{Iso}(\mathcal{F}_{5})=\Big\{
                     [\alpha^{2} x:\alpha^{3} y:z]
                     \hspace{1mm}\big\vert\hspace{1mm}\alpha\in\mathbb{C}^*\Big\}$.
\end{itemize}
\end{Small}
\vspace{2mm}

\noindent In particular, the dimensions of the orbits $\mathcal{O}(\mathcal{H}_{\hspace{0.2mm}i})$ and $\mathcal{O}(\F_j)$ are the following
\begin{align*}
&\dim\mathcal{O}(\F_{1})=6,&&&\dim\mathcal{O}(\F_{2})=6,&&& \dim\mathcal{O}(\mathcal{H}_{\hspace{0.2mm}i})=7, i=1,\ldots,11,\\
&
\dim\mathcal{O}(\F_{4})=7,&&& \dim\mathcal{O}(\F_{5})=7,&&& \dim\mathcal{O}(\F_{3})=8.
\end{align*}
}
\end{pro}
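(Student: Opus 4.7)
The plan is to determine each isotropy group $\mathrm{Iso}(\F)$ directly from the defining $1$-form listed in Theorem~\ref{thmalph:classification}, and then to read off the orbit dimensions via the identity $\dim\mathcal{O}(\F) = 8 - \dim\mathrm{Iso}(\F)$, since $\dim\mathrm{PGL}_3(\C)=8$. The key principle is that any $A\in\mathrm{Iso}(\F)$ must preserve every geometric invariant attached to $\F$: it permutes $\Sing\F$ respecting the algebraic multiplicity $\nu(\F,\cdot)$, the \textsc{Milnor} number $\mu(\F,\cdot)$, the \textsc{Baum-Bott} invariant and the radial order; it permutes the invariant lines of $\F$; and it preserves the decomposition $\IF = \IinvF+\ItrF$ of the inflection divisor. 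These rigid constraints cut $\mathrm{PGL}_3(\C)$ down to a small family of candidates that can then be checked one by one.

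For each homogeneous foliation $\mathcal{H}_i$ the point $[0:0:1]$ is, by construction, the unique singular point of maximal algebraic multiplicity $3$, so it must be fixed by every $A\in\mathrm{Iso}(\mathcal{H}_i)$. This reduces $A$ to the affine shape $[ax+by:cx+dy:\alpha z+\beta x+\gamma y]$ in the chart $(x,y)$; homogeneity of $\omega_i$ forces $\beta=\gamma=0$ and leaves $\alpha\in\C^*$ free, which explains the ubiquitous factor $[\ \cdot\ :\ \cdot\ :\alpha z]$ and the resulting $1$-dimensional isotropy. The tangent cone at the origin, read off from the $\omega_i$'s, must be preserved up to scalar; this forces the linear part $\bigl(\begin{smallmatrix}a&b\\c&d\end{smallmatrix}\bigr)$ to permute a finite set of lines through the origin, yielding finitely many candidates, each tested directly on the defining form. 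The extra discrete symmetries come from the obvious permutations of the tangent lines compatible with the non-tangent-cone part $C(x,y)$.

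The non-homogeneous foliations $\F_j$ are handled in the same spirit, using the inflection divisor and the radial singularities as additional marked data. For $\F_1$, $\F_2$ and $\F_5$ the singularity $[0:0:1]$ of algebraic multiplicity $3$ is unique or distinguished; a direct affine computation gives the listed $2$- or $1$-parameter groups, the second parameter in $\mathrm{Iso}(\F_1), \mathrm{Iso}(\F_2)$ arising from the invariance of $\omegaoverline_1, \omegaoverline_2$ under $[x:y:z+\beta x]$. For $\F_3$ the $13$ singular points decompose into one distinguished point and $12$ radial singularities of order $2$ lying on the three coordinate lines, so $\mathrm{Iso}(\F_3)$ permutes these three lines; combined with the sign changes fixing $(x^3-x)\mathrm{d}y-(y^3-y)\mathrm{d}x$, this recovers the finite group of order $48$. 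For $\F_4$ the double inflection point pins the flag $([0:0:1],\{x=0\})$ and the residual freedom is a one-parameter affine $\C$-family together with a cyclic group of order $3$ generated by the multiplication of $x$ by $\mathrm{j}=\mathrm{e}^{2\mathrm{i}\pi/3}$.

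The main obstacle is $\mathcal{H}_{11}$, whose isotropy has $12$ elements involving the twelfth root $\xi=\mathrm{e}^{\mathrm{i}\pi/6}$ and whose generators mix $x$ and $y$ in a non-obvious way. The cleanest route is to list the three invariant lines of $\mathcal{H}_{11}$ through $[0:0:1]$, identify the group of permutations of these lines that is compatible with the non-tangent-cone factor of $\omega_{11}$, and then to solve the residual $3\times 3$ linear system for the matrix coefficients of each generator; the sixth-root structure emerges from the roots of a cubic in the compatibility equations. Once every $\mathrm{Iso}(\F)$ has been written down in this way, the dimensions stated in the proposition are immediate: $\mathrm{Iso}(\F_1)$ and $\mathrm{Iso}(\F_2)$ are $2$-dimensional, $\mathrm{Iso}(\F_3)$ is finite, and all remaining isotropy groups are $1$-dimensional, producing orbit dimensions $6$, $8$ and $7$ respectively.
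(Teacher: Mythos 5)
The paper offers no argument for this proposition --- it is declared elementary and left to the reader --- so there is no ``paper proof'' to compare against; your strategy (force each element of $\mathrm{Iso}(\F)$ to preserve the distinguished singular points, the invariant lines and the inflection data, reduce to finitely many candidates or explicit parameter families, check each on the defining $1$-form, and then read off $\dim\mathcal{O}(\F)=8-\dim\mathrm{Iso}(\F)$) is exactly the natural one and is sound. Two points need attention. First, your count of $\mathrm{Iso}(\F_3)$ as a group of order $48$ is wrong: the listed elements are the six coordinate patterns each with four sign choices, hence $24$ elements of $\mathrm{PGL}_3(\C)$; the figure $48$ arises only if one counts matrices in $\mathrm{GL}_3$ before quotienting by the diagonal sign change $[-x:-y:-z]=\mathrm{id}$. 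The slip is harmless for the dimension count ($\dim\mathcal{O}(\F_3)=8$ either way), but it should be corrected. Second, the assertion that ``homogeneity of $\omega_i$ forces $\beta=\gamma=0$'' is not by itself a proof: what is needed is that every element of $\mathrm{Iso}(\mathcal{H}_i)$ preserves the line at infinity, and this follows from the observation that the line at infinity is the \emph{unique} $\mathcal{H}_i$-invariant line avoiding $[0:0:1]$ --- if there were another, the homotheties $[x:y:\alpha z]$ already known to lie in $\mathrm{Iso}(\mathcal{H}_i)$ would generate infinitely many invariant lines, contradicting $\deg\IinvH\leq 9$. With that in place the reduction to $[ax+by:cx+dy:\alpha z]$ and the rest of your outline are correct.

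Bear in mind, though, that what you have written is a plan rather than a verification: to establish the stated \emph{equalities} (not merely inclusions such as the one the paper itself records for $\mathrm{Iso}(\F_1^{(d)})$ and $\mathrm{Iso}(\F_2^{(d)})$), the case-by-case computations --- in particular the twelve elements of $\mathrm{Iso}(\mathcal{H}_{11})$ involving $\xi=\mathrm{e}^{\mathrm{i}\pi/6}$, and the absence of extra discrete elements in the two-parameter groups $\mathrm{Iso}(\F_1)$ and $\mathrm{Iso}(\F_2)$ --- still have to be carried out explicitly.
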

\vspace{1mm}

\Subsection{Description  of degree three foliations $\F$ such that $\dim\mathcal{O}(\F)=6$}

\noindent Proposition~2.3~of~\cite{CDGBM10} asserts that if $\F$ is a foliation of degree  $d\geq2$ on $\pp,$ then the dimension of~$\mathcal{O}(\F)$ is at least $6,$ or equivalently, the dimension of~$\mathrm{Iso}(\F)$ is at most $2$.  Notice that these bounds are attained by the foliations  $\F_{1}^{(d)}$ and $\F_{2}^{(d)}$ defined in the affine chart $z=1$ respectively by the $1$-forms
\begin{align*}
&\omegaoverline_{1}^{(d)}\hspace{1mm}=y^{d}\mathrm{d}x+x^{d}(x\mathrm{d}y-y\mathrm{d}x)
&& \text{and} &&
\omegaoverline_{2}^{(d)}\hspace{1mm}=x^{d}\mathrm{d}x+y^{d}(x\mathrm{d}y-y\mathrm{d}x).
\end{align*}

\noindent Indeed, it is easy to check that
\begin{small}
\begin{align*}
&\left\{
\left(\frac{\alpha^{d-1}x}{1+\beta x},\frac{\alpha^{d}y}{1+\beta x}\right)
\hspace{1mm}\Big\vert\hspace{1mm}
\alpha\in\mathbb{C}^*,\hspace{1mm}\beta\in\mathbb{C}
\right\}
\subset\mathrm{Iso}(\F_{1}^{(d)})
&&\text{and}&&
\left\{
\left(\frac{\alpha^{d+1}x}{1+\beta x},\frac{\alpha^{d}y}{1+\beta x}\right)
\hspace{1mm}\Big\vert\hspace{1mm}
\alpha\in\mathbb{C}^*,\hspace{1mm}\beta\in\mathbb{C}
\right\}
\subset\mathrm{Iso}(\F_{2}^{(d)}),
\end{align*}
\end{small}
\hspace{-1mm}so that $\dim\mathrm{Iso}(\F_{i}^{(d)})\geq2,i=1,2,$ and so $\dim\mathrm{Iso}(\F_{i}^{(d)})=2.$
\begin{rem}
By construction, we have $\F_{1}^{(3)}=\F_1$ and $\F_{2}^{(3)}=\F_2.$
\end{rem}

\noindent D.~\textsc{Cerveau}, J.~\textsc{D\'eserti}, D.~\textsc{Garba Belko} and R.~\textsc{Meziani} have shown that up to isomorphism of $\pp$ the quadratic foliations $\F_{1}^{(2)}$ and $\F_{2}^{(2)}$ are the only foliations realizing the minimal dimension of the orbits in degree~$2$ (\cite[Proposition 2.7]{CDGBM10}). Corollary~\ref{coralph:dim-min} stated in the Introduction is a similar result in degree~$3$.

\begin{proof}[\sl Proof of Corollary~\ref{coralph:dim-min}]
Let $\F$ be a degree three foliation on  $\pp$  such that  $\dim\mathcal{O}(\F)=6.$ Since $\mathrm{Iso}(\Leg\F)$ is isomorphic to  $\mathrm{Iso}(\F),$ we have that $\dim\mathrm{Iso}(\Leg\F)=\dim\mathrm{Iso}(\F)=8-6=2.$ Let us fix $m\in\pd\smallsetminus\Delta(\Leg\F)$ and let  $\W_m$ be the germ of the $3$-web  $\Leg\F$ at $m.$
After \'{E}. \textsc{Cartan} \cite{Car08} the equality  $\dim\mathrm{Iso}(\Leg\F)=2$ implies that $\W_m$ is parallelizable and so flat. Since the curvature $\Leg\F$ is holomorphic on  $\pd\smallsetminus\Delta(\Leg\F),$ we deduce that $\Leg\F$ is flat. Therefore $\F$ is linearly conjugate to one of the $16$ foliations given by Theorem~\ref{thmalph:classification}. Proposition~\ref{pro:isotropies} and the hypothesis $\dim\mathcal{O}(\F)=6$ allows us to conclude.
\end{proof}

\Subsection{Closure of the orbits and irreducible components of $\mathbf{FP}(3)$}\label{subsec:Adhérence-Composantes-FP(3)}

\noindent We begin by studying the closure of the orbits $\mathcal{O}(\mathcal{H}_{\hspace{0.2mm}i})$ and $\mathcal{O}(\F_j)$ in $\mathbf{F}(3),$ then we prove Theorem~\ref{thmalph:12-Composantes irréductibles} describing the irreducible components of~$\mathbf{FP}(3).$

\noindent The following definition will be useful.
\begin{defin}[\cite{CDGBM10}]
Let $\F$ and $\F'$ be two foliations of~$\mathbf{F}(3).$ We say that $\F$ \textsl{degenerates} into $\F'$ if the closure $\overline{\mathcal{O}(\F)}$ (inside~$\mathbf{F}(3)$) of $\mathcal{O}(\F)$ contains~$\mathcal{O}(\F')$ and $\mathcal{O}(\mathcal{F})\not=\mathcal{O}(\mathcal{F}').$
\end{defin}

\begin{rems}\label{rems:orbt-cnvx-plt}
Let $\F$ and $\F'$ be two foliations such that $\F$ degenerates into $\F'$. Then
\begin{itemize}
\item [(i)]   $\dim\mathcal{O}(\F')<\dim\mathcal{O}(\F)$;

\item [(ii)]  if $\Leg\F$ is flat then $\Leg\F'$ is also flat;

\item [(iii)] $\deg\IinvF\leq\deg\mathrm{I}_{\mathcal{F}'}^{\mathrm{inv}}$, equivalently~$\deg\ItrF\geq\deg\mathrm{I}_{\mathcal{F}'}^{\hspace{0.2mm}\mathrm{tr}}$. In particular, if $\F$ is convex then $\F'$ is also convex.
\end{itemize}
\end{rems}

\noindent As we have already noted in the Introduction, D.~\textsc{Mar\'{\i}n} and J. \textsc{Pereira} have shown in~\cite{MP13} that the closure of the orbit $\mathcal{O}(\F_3)$ of $\F_3$ is an irreducible component of  $\mathbf{FP}(3).$ Assertion~\textbf{\textit{2.}} in proposition below gives a more precise description.

\begin{pro}\label{pro:adh-F1-F2-F3}
{\sl
\textbf{\textit{1.}} The orbits $\mathcal{O}(\F_1)$ and $\mathcal{O}(\F_2)$ are closed.

\noindent\textbf{\textit{2.}} $\overline{\mathcal{O}(\F_3)}=\mathcal{O}(\F_1)\cup\mathcal{O}(\mathcal{H}_{1})\cup\mathcal{O}(\mathcal{H}_{\hspace{0.2mm}3}) \cup\mathcal{O}(\F_3).$
}
\end{pro}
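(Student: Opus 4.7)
The plan is to treat the two assertions separately.

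\textbf{Part \textit{1.}} follows from a dimension count. Assume, for contradiction, that $\mathcal{O}(\F_i)$ ($i\in\{1,2\}$) is not closed in $\mathbf{F}(3)$. Then there exists $\F'\in\overline{\mathcal{O}(\F_i)}\setminus\mathcal{O}(\F_i)$; since $\overline{\mathcal{O}(\F_i)}$ is $\mathrm{PGL}_3(\C)$-invariant, $\mathcal{O}(\F')\subset\overline{\mathcal{O}(\F_i)}$ with $\mathcal{O}(\F')\neq\mathcal{O}(\F_i)$, so $\F_i$ degenerates into $\F'$. Remark~\ref{rems:orbt-cnvx-plt}(i) then gives $\dim\mathcal{O}(\F')<\dim\mathcal{O}(\F_i)=6$, contradicting the minimality bound $\dim\mathcal{O}(\F')\geq 6$ from \cite[Proposition~2.3]{CDGBM10}. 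Hence $\mathcal{O}(\F_1)$ and $\mathcal{O}(\F_2)$ are closed.

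\textbf{Part \textit{2.}} splits into two inclusions. For the inclusion ``$\subseteq$'', $\overline{\mathcal{O}(\F_3)}$ is closed and $\mathrm{PGL}_3(\C)$-invariant, hence a union of orbits. Since $\F_3$ belongs to $\mathbf{FP}(3)$ and is convex (Corollary~\ref{coralph:class-convexe-3}), Remark~\ref{rems:orbt-cnvx-plt}(ii)--(iii) implies that every foliation in $\overline{\mathcal{O}(\F_3)}$ lies in $\mathbf{FP}(3)$ and is itself convex. A second application of Corollary~\ref{coralph:class-convexe-3} shows that the convex foliations of degree three are exactly those in $\mathcal{O}(\F_1)\cup\mathcal{O}(\mathcal{H}_{1})\cup\mathcal{O}(\mathcal{H}_{\hspace{0.2mm}3})\cup\mathcal{O}(\F_3)$, giving the desired inclusion.

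For the reverse inclusion ``$\supseteq$'', I would exhibit three explicit one-parameter subgroups of $\mathrm{PGL}_3(\C)$ whose action on $\F_3$ has limits as $t\to 0$ in $\mathcal{O}(\mathcal{H}_{1})$, $\mathcal{O}(\mathcal{H}_{\hspace{0.2mm}3})$, and $\mathcal{O}(\F_1)$, respectively. The simplest case is $\mathcal{H}_{1}$: taking $\phi_{t}(x,y,z)=(x,y,tz)$ and multiplying $\phi_{t}^{*}\omegaoverline_{3}$ by $t^{4}$ yields $(x^{3}-t^{2}x)\mathrm{d}y-(y^{3}-t^{2}y)\mathrm{d}x$, whose limit as $t\to 0$ is $x^{3}\mathrm{d}y-y^{3}\mathrm{d}x=-\omega_{1}$, so $\mathcal{H}_{1}\in\overline{\mathcal{O}(\F_3)}$. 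A second explicit family $\chi_{t}$ must be produced whose limit lies in $\mathcal{O}(\mathcal{H}_{\hspace{0.2mm}3})$; one natural strategy is to conjugate the previous rescaling by a fixed element of $\mathrm{PGL}_3(\C)$ mapping the line at infinity to a line through a well-chosen singularity, so as to redirect the coalescence of the $13$ singular points of $\F_3$ toward the configuration of $\mathcal{H}_{\hspace{0.2mm}3}$ (one triple singularity together with four non-degenerate ones on a common line). The inclusion $\mathcal{O}(\F_1)\subset\overline{\mathcal{O}(\F_3)}$ then follows either from a third explicit degeneration or by invoking the relation $\overline{\mathcal{O}(\mathcal{H}_{1})}\supset\mathcal{O}(\F_1)$ (which would be established elsewhere in Theorem~\ref{thmalph:12-Composantes irr�ductibles}) combined with the step above.

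\textbf{Main obstacle.} The main technical difficulty is the construction and verification of the degeneration $\F_3\rightsquigarrow\mathcal{H}_{\hspace{0.2mm}3}$. Standard diagonal rescalings and unipotent shears applied to $\omegaoverline_{3}$ all tend to produce limits in the more symmetric orbit $\mathcal{O}(\mathcal{H}_{1})$; the specific one-parameter family whose limit lies in $\mathcal{O}(\mathcal{H}_{\hspace{0.2mm}3})$ must be chosen so that exactly the right coalescence of the singularities of $\F_3$ occurs. In practice, the correct limit may be identified a posteriori by computing the \textsc{Henaut} curvature of the limit foliation via Formula~\eqref{equa:Formule-Henaut}, checking its first integral against that of $\mathcal{H}_{\hspace{0.2mm}3}$, and confirming $\mathrm{PGL}_3(\C)$-equivalence.
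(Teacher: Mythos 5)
Your Part \textit{1.} and the inclusion $\overline{\mathcal{O}(\F_3)}\subseteq\mathcal{O}(\F_1)\cup\mathcal{O}(\mathcal{H}_{1})\cup\mathcal{O}(\mathcal{H}_{\hspace{0.2mm}3})\cup\mathcal{O}(\F_3)$ are correct and coincide with the paper's arguments (minimal orbit dimension $6$ combined with Remark~\ref{rems:orbt-cnvx-plt}(i); preservation of flatness and convexity under degeneration combined with Corollary~\ref{coralph:class-convexe-3}). Your degeneration of $\F_3$ onto $\mathcal{H}_1$ is also the one used in the paper. The genuine gap is in the reverse inclusion: of the three degenerations that must be exhibited, you construct only one. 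The families realizing $\mathcal{H}_{\hspace{0.2mm}3}$ and $\F_1$ as limits of elements of $\mathcal{O}(\F_3)$ are precisely the nontrivial content of assertion \textit{2.}, and ``a family must be produced'', together with an a posteriori curvature check, is not a proof (in particular, agreement of curvatures of two $3$-webs does not by itself certify $\mathrm{PGL}_3(\C)$-equivalence of the underlying foliations). Your fallback for $\F_1$ via $\mathcal{O}(\F_1)\subset\overline{\mathcal{O}(\mathcal{H}_1)}$ is not circular --- that inclusion is proved independently of this proposition (Proposition~\ref{pro:cond-suff-dgnr-F1} and Corollary~\ref{cor:H1-H3-H5-H7-F4}) --- but as written it is likewise deferred rather than established.

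For the record, here are the two missing families used in the paper. In the affine chart $x=1$, the foliations $\F_3$ and $\F_1$ are given by $\thetaoverline_3=(y^3-y)\mathrm{d}z-(z^3-z)\mathrm{d}y$ and $\thetaoverline_1=\mathrm{d}y-y^3\mathrm{d}z$; setting $\sigma_\varepsilon=\left(\frac{y}{\varepsilon},\,2+6\varepsilon^{2}z\right)$ one computes
\begin{align*}
-\frac{\varepsilon}{6}\,\sigma_\varepsilon^*\thetaoverline_3=(1+11\varepsilon^{2}z+36\varepsilon^{4}z^{2}+36\varepsilon^{6}z^{3})\mathrm{d}y+(\varepsilon^{2}y-y^{3})\mathrm{d}z\;\longrightarrow\;\thetaoverline_1\qquad(\varepsilon\to0),
\end{align*}
so $\F_3$ degenerates into $\F_1$. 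For $\mathcal{H}_{\hspace{0.2mm}3}$ one works in homogeneous coordinates with $\Omegaoverline_3=x^3(y\mathrm{d}z-z\mathrm{d}y)+y^3(z\mathrm{d}x-x\mathrm{d}z)+z^3(x\mathrm{d}y-y\mathrm{d}x)$ and the family $\psi_\varepsilon=[x-y:2\varepsilon z-x-y:x+y]$, which degenerates to a rank-two projection as $\varepsilon\to0$; one finds
\begin{align*}
\frac{1}{8\varepsilon}\,\psi_\varepsilon^*\Omegaoverline_3=z\,y(y-\varepsilon z)(3x+y-2\varepsilon z)\mathrm{d}x-z\,x(x-\varepsilon z)(x+3y-2\varepsilon z)\mathrm{d}y+xy(x^2-y^2)\mathrm{d}z\;\longrightarrow\;\Omega_3,
\end{align*}
the homogeneous $1$-form defining $\mathcal{H}_{\hspace{0.2mm}3}$. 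Without these (or equivalent) explicit computations the proof of assertion \textit{2.} is incomplete.
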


\begin{proof}
First assertion follows from Corollary~\ref{coralph:dim-min} and Remark~\ref{rems:orbt-cnvx-plt}~(i).

\noindent By Corollary~\ref{coralph:class-convexe-3} and Remark~\ref{rems:orbt-cnvx-plt}~(iii), $\F_3$ can degenerate only into  $\F_1,\mathcal{H}_{1}$ or $\mathcal{H}_{\hspace{0.2mm}3}.$
Let us show that this is the case.
Consider the family of homotheties $\varphi=\varphi_{\varepsilon}=\left(\frac{x}{\varepsilon},\frac{y}{\varepsilon}\right).$ We have that
\begin{align*}
-\varepsilon^4\varphi^*\omegaoverline_{3}=(y^3-\varepsilon^2y)\mathrm{d}x+(\varepsilon^2x-x^3)\mathrm{d}y
\end{align*}
tends to  $\omega_{1}$ as $\varepsilon$ goes to $0.$ Thus, the foliation $\F_{3}$ degenerates into $\mathcal{H}_{1}.$

\noindent In the affine chart $x=1$, $\F_1,$ resp. $\F_3,$ is given by
\begin{align*}
& \thetaoverline_1=\mathrm{d}y-y^3\mathrm{d}z, && \text{resp. } \thetaoverline_3=(y^3-y)\mathrm{d}z-(z^3-z)\mathrm{d}y;
\end{align*}
consider the family of automorphisms $\sigma=\left(\frac{y}{\varepsilon},2+6\varepsilon^{2}z\right).$ A direct computation shows that
\begin{align*}
-\frac{\varepsilon}{6}\sigma^*\thetaoverline_3=(1+11\varepsilon^{2}z+36\varepsilon^{4}z^{2}+36\varepsilon^{6}z^{3})\mathrm{d}y+(\varepsilon^{2}y-y^{3})\mathrm{d}z
\end{align*}
which tends to  $\thetaoverline_1$ as $\varepsilon$ tends to $0.$ Thus $\F_3$ degenerates into $\F_{1}.$

\noindent In homogeneous coordinates $\mathcal{H}_{\hspace{0.2mm}3}$, resp. $\F_3,$ is given by
\begin{align*}
&\Omega_3=z\,y^2(3x+y)\mathrm{d}x-z\,x^2(x+3y)\mathrm{d}y+xy(x^2-y^2)\mathrm{d}z,&&\\
\text{resp}.\hspace{1.5mm}
&\Omegaoverline_3=x^3(y\mathrm{d}z-z\mathrm{d}y)+y^3(z\mathrm{d}x-x\mathrm{d}z)+z^3(x\mathrm{d}y-y\mathrm{d}x);
\end{align*}
by putting $\psi=\left[x-y:2\varepsilon\,z-x-y:x+y\right]$ we obtain
\begin{align*}
\frac{1}{8\varepsilon}\psi^*\Omegaoverline_3=
z\,y(y-\varepsilon\,z)(3x+y-2\varepsilon\,z)\mathrm{d}x-z\,x(x-\varepsilon\,z)(x+3y-2\varepsilon\,z)\mathrm{d}y+xy(x^2-y^2)\mathrm{d}z
\end{align*}
which tends to $\Omega_3$ as $\varepsilon$ goes to $0.$ As a consequence $\F_3$ degenerates into $\mathcal{H}_{\hspace{0.2mm}3}.$
\end{proof}

\begin{rem}
By combining Assertion~\textbf{\textit{2.}} of Proposition~\ref{pro:adh-F1-F2-F3} and Corollary~\ref{coralph:class-convexe-3}, we deduce that the set of convex foliations of degree three on $\pp$ is exactly the closure $\overline{\mathcal{O}(\F_3)}$ of $\mathcal{O}(\F_3)$ and is therefore an irreducible closed subset of~$\mathbf{F}(3).$
\end{rem}

\noindent Next result is an immediate consequence of Corollary~\ref{coralph:dim-min} and Remark~\ref{rems:orbt-cnvx-plt}~(i).
\begin{cor}\label{cor:dim-o(F)=<7}
{\sl Let  $\F$ be an element of $\mathbf{F}(3)$ such that $\dim\mathcal{O}(\F)\leq7.$ Then
$$\overline{\mathcal{O}(\F)}\subset\mathcal{O}(\F)\cup\mathcal{O}(\F_1)\cup\mathcal{O}(\F_2).$$
}
\end{cor}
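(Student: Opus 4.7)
The plan is to argue directly from the two results cited: Corollary~\ref{coralph:dim-min} which characterises orbits of minimal dimension in degree three, and Remark~\ref{rems:orbt-cnvx-plt}~(i) which controls how orbit dimension must strictly decrease under degeneration. The proof should be a short deduction, not a geometric argument about closures.

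First I would pick an arbitrary $\F' \in \overline{\mathcal{O}(\F)}$. There are two mutually exclusive possibilities: either $\mathcal{O}(\F') = \mathcal{O}(\F)$, in which case $\F' \in \mathcal{O}(\F)$ and we are done, or $\mathcal{O}(\F') \neq \mathcal{O}(\F)$, in which case by definition $\F$ degenerates into $\F'$ (note that $\overline{\mathcal{O}(\F)}\setminus\mathcal{O}(\F)$ is a union of orbits of strictly smaller dimension, so the dichotomy is clean). In the latter case, Remark~\ref{rems:orbt-cnvx-plt}~(i) gives
\[
\dim \mathcal{O}(\F') < \dim \mathcal{O}(\F) \leq 7,
\]
so $\dim \mathcal{O}(\F') \leq 6$.

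Next I would invoke the general lower bound of \cite[Proposition~2.3]{CDGBM10} recalled earlier in the section, which asserts that every orbit of a degree $d \geq 2$ foliation has dimension at least $6$. Combined with the previous inequality this forces $\dim \mathcal{O}(\F') = 6$. At this point Corollary~\ref{coralph:dim-min} applies and yields $\F' \in \mathcal{O}(\F_1) \cup \mathcal{O}(\F_2)$, which is precisely the desired conclusion.

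There is no substantive obstacle here; the only subtle point to state cleanly is that an element of $\overline{\mathcal{O}(\F)} \setminus \mathcal{O}(\F)$ belongs to a strictly smaller orbit (so the notion of degeneration applies), which follows from the fact that $\mathrm{PGL}_3(\mathbb{C})$ acts algebraically on $\mathbf{F}(3)$ and hence orbits are locally closed and their boundaries are unions of lower-dimensional orbits. Once this is in place the rest is immediate from the two cited results.
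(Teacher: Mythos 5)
Your proof is correct and follows exactly the route the paper intends: the paper states this corollary as an immediate consequence of Corollary~\ref{coralph:dim-min} and Remark~\ref{rems:orbt-cnvx-plt}~(i), and your write-up simply makes that deduction explicit (including the standard fact that orbit boundaries of an algebraic group action consist of lower-dimensional orbits, which justifies the remark). Nothing further is needed.
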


\noindent The following result provides a necessary condition for a degree three foliation on $\pp$ degenerates into the foliation $\F_1$.

\begin{pro}\label{pro:cond-nécess-dgnr-F1}
{\sl Let $\F$ be an element of $\mathbf{F}(3)$. If $\F$ degenerates into $\F_1$, then $\F$ possesses a non-degenerate singular point $m$ satisfying $\mathrm{BB}(\F,m)=4.$
}
\end{pro}

\begin{proof}
Assume that $\F$ degenerates into $\F_1$. Then there exists an analytic family $(\F_\varepsilon)$ of foliations defined by $1$-forms $\omega_\varepsilon$ such that $\F_\varepsilon\in\mathcal{O}(\F)$ for $\varepsilon\neq 0$ and  $\F_{\varepsilon=0}=\F_1$. The non-degenerate singular point $m_0$ of $\F_1$ is \og stable\fg, {\it i.e.} there is an analytic family $(m_\varepsilon)$ of non-degenerate singular points of $\F_\varepsilon$ such that $m_{\varepsilon=0}=m_0$. The $\F_\varepsilon$'s being conjugated to $\F$ for $\varepsilon\neq 0$, the foliation $\F$ admits a non-degenerate singular point $m$ such that
$$
\forall\hspace{1mm}\varepsilon\in\mathbb{C}^*,\hspace{1mm}\mathrm{BB}(\F_\varepsilon,m_\varepsilon)=\mathrm{BB}(\F,m).
$$
Since $\mu(\F_\varepsilon,m_\varepsilon)=1$ for every $\varepsilon$ in $\mathbb{C}$, the function  $\varepsilon\mapsto\mathrm{BB}(\F_\varepsilon,m_\varepsilon)$ is continuous, hence constant on $\mathbb{C}$. As a result
\begin{align*}
&&\mathrm{BB}(\F,m)=\mathrm{BB}(\F_{\varepsilon=0},m_{\varepsilon=0})=\mathrm{BB}(\F_1,m_0)=4.
\end{align*}
\end{proof}

\begin{cor}\label{cor:H2-H8-H11-F5}
{\sl The foliations $\mathcal{H}_{\hspace{0.2mm}2},\mathcal{H}_{\hspace{0.2mm}8},\mathcal{H}_{\hspace{0.2mm}11}$ and $\F_{5}$ do not degenerate into  $\F_1.$
}
\end{cor}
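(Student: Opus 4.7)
The plan is to invoke Proposition~\ref{pro:cond-nécess-dgnr-F1}, which asserts that a foliation $\F\in\mathbf{F}(3)$ can degenerate into $\F_{1}$ only if it admits a non-degenerate singular point with Baum-Bott invariant equal to $4$. It therefore suffices to verify that none of the four foliations $\mathcal{H}_{\hspace{0.2mm}2},\mathcal{H}_{\hspace{0.2mm}8},\mathcal{H}_{\hspace{0.2mm}11},\F_{5}$ possesses such a singularity.

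For the three homogeneous foliations $\mathcal{H}_{\hspace{0.2mm}2},\mathcal{H}_{\hspace{0.2mm}8},\mathcal{H}_{\hspace{0.2mm}11}$, I would simply read off the information from Table~$1$ of~\cite{BM17}, where the singular locus of each $\mathcal{H}_{\hspace{0.2mm}i}$ is listed together with the Baum-Bott invariant at every non-degenerate singularity. A direct inspection of the three relevant rows should confirm that the value $4$ never occurs there.

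For $\F_{5}$, I would argue more strongly that it has no non-degenerate singularity at all. In the affine chart $z=1$ the $1$-form reads $\omegaoverline_{5}=y(y^{2}-x^{3})\mathrm{d}x+x(x^{3}+2y^{2})\mathrm{d}y$, and an immediate inspection shows that its only zero in this chart is $[0:0:1]$, a point of algebraic multiplicity $3$; the inequality $\nu(\F,s)^{2}\leq\mu(\F,s)$ recalled in the proof of Lemma~\ref{lem:nu(F,m)=d} then yields $\mu(\F_{5},[0:0:1])\geq 9$, so this singularity is degenerate. Homogenising via the Euler identity produces the $\mathrm{d}z$-component $-3xy^{3}\,\mathrm{d}z$, and restricting the resulting homogeneous form to the chart $y=1$ gives the local $1$-form $(z-x^{3})\mathrm{d}x-3xz\,\mathrm{d}z$ at $[0:1:0]$; the associated vector field $V=-3xz\,\partial_{x}+(x^{3}-z)\,\partial_{z}$ has linear part $-z\,\partial_{z}$ of rank one, so $[0:1:0]$ is also degenerate (a short local computation even gives $\mu(\F_{5},[0:1:0])=4$). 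Hence every singular point of $\F_{5}$ is degenerate, and the desired conclusion follows from Proposition~\ref{pro:cond-nécess-dgnr-F1}.

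The only step requiring any care is the bookkeeping for the three homogeneous cases, which is purely a matter of extracting entries from Table~$1$ of~\cite{BM17}; the local computation at $[0:1:0]$ for $\F_{5}$ is entirely elementary. I do not anticipate any conceptual obstacle.
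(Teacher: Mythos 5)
Your overall strategy coincides with the paper's: the corollary is meant to follow immediately from the necessary condition that a foliation degenerating into $\F_1$ must possess a non-degenerate singular point with \textsc{Baum--Bott} invariant $4$, combined with an inspection of the singularities of the four foliations (for the homogeneous ones, read off from Table~1 of \cite{BM17}).

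However, your treatment of $\F_{5}$ contains a computational error that produces a false intermediate claim. Writing $\omegaoverline_{5}=(y^{3}-x^{3}y)\mathrm{d}x+(x^{4}+2xy^{2})\mathrm{d}y$ in the chart $z=1$, the homogenization is $(y^{3}z-x^{3}y)\mathrm{d}x+(x^{4}+2xy^{2}z)\mathrm{d}y+c\,\mathrm{d}z$ with $zc=-(xa+yb)=-3xy^{3}z$, hence $c=-3xy^{3}$; its restriction to the chart $y=1$ is therefore $(z-x^{3})\mathrm{d}x-3x\,\mathrm{d}z$, and \emph{not} $(z-x^{3})\mathrm{d}x-3xz\,\mathrm{d}z$. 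The dual vector field $3x\frac{\partial}{\partial x}+(z-x^{3})\frac{\partial}{\partial z}$ has linear part $3x\frac{\partial}{\partial x}+z\frac{\partial}{\partial z}$ with the two non-zero eigenvalues $3$ and $1$, so $[0:1:0]$ is a \emph{non-degenerate} singularity of $\F_{5}$. This is confirmed by the index formula $\sum_{s}\mu(\F_{5},s)=3^{2}+3+1=13$: a direct computation gives $\mu(\F_{5},[0:0:1])=12$ (not $9$), which forces $\mu(\F_{5},[0:1:0])=1$ and rules out your value $4$. Thus your assertion that every singularity of $\F_{5}$ is degenerate is false, and the argument as written breaks at this point. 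The conclusion nevertheless survives with a small repair: $\mathrm{BB}(\F_{5},[0:1:0])=3+\tfrac{1}{3}+2=\tfrac{16}{3}\neq 4$, so $\F_{5}$ still has no non-degenerate singular point with \textsc{Baum--Bott} invariant $4$ and the necessary condition applies. You should replace the non-existence claim by this explicit computation of the \textsc{Baum--Bott} invariant at $[0:1:0]$.
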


\noindent A sufficient condition for the degeneration of a degree three foliation into $\F_1$ is the following:

\begin{pro}\label{pro:cond-suff-dgnr-F1}
{\sl Let $\F$ be an element of $\mathbf{F}(3)$ such that $\F_1\not\in\mathcal{O}(\F).$ If $\F$ possesses a non-degenerate singular point $m$ satisfying $$\mathrm{BB}(\F,m)=4\qquad \text{and}\qquad \kappa(\F,m)=3,$$
then $\F$ degenerates into $\F_1.$
}
\end{pro}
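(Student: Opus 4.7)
The strategy is to construct an explicit one-parameter family $(\varphi_\varepsilon^* \F)_{\varepsilon \in \mathbb{C}^*}$ in the orbit $\mathcal{O}(\F)$ which converges, as $\varepsilon \to 0$, to $\F_1$ in $\mathbf{F}(3)$. Combined with the hypothesis $\F_1 \notin \mathcal{O}(\F)$, this establishes that $\F$ degenerates into $\F_1$. The automorphisms $\varphi_\varepsilon$ will be a weighted homothety adapted to the local structure of $\F$ at the distinguished singularity $m$.

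I begin by normalizing: by acting with a suitable element of $\mathrm{PGL}_3(\mathbb{C})$, I put the singularity $m$ at $[0:1:0]$ (the position of the non-degenerate singularity of $\F_1$), and, using the hypothesis $\kappa(\F,m) = 3$, I align a non-invariant line $\ell$ realizing tangency~$3$ with the line $\{z = 0\}$. Working in the affine chart $y = 1$ with coordinates $(u,v) = (x,z)$ (so that $m$ is the origin and $\ell = \{v = 0\}$), I write a defining vector field as $X = P\partial_u + Q\partial_v$. The condition $\mathrm{BB}(\F,m) = 4$ forces the eigenvalues of the linear part of $X$ at $m$ to coincide; rescaling $\omega$ and applying a residual linear change in $(u,v)$, I may assume the linear part is $X_1 = -u\partial_u - v\partial_v$ (semisimple case) or $X_1 = (v - u)\partial_u - v\partial_v$ (Jordan case). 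Writing $Q = -v + Q_2 + Q_3 + Q_4$ with $Q_k$ homogeneous of degree $k$, the tangency-$3$ condition along $\{v=0\}$ translates into $Q_2(u,0) = 0$ and $Q_3(u,0) = c\,u^3$ with $c \in \mathbb{C}^*$, and a further diagonal rescaling of $u$ normalizes $c = 1$.

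Next, I introduce the automorphism $\varphi_\varepsilon = [\varepsilon x : y : \varepsilon^3 z] \in \mathrm{PGL}_3(\mathbb{C})$, which in the chart $y = 1$ is the weighted homothety $(u,v) \mapsto (\varepsilon u, \varepsilon^3 v)$. Writing $\omega = -Q\, du + P\, dv$, a direct computation gives
$$
\varphi_\varepsilon^* \omega \;=\; -\varepsilon\, Q(\varepsilon u, \varepsilon^3 v)\, du \;+\; \varepsilon^3\, P(\varepsilon u, \varepsilon^3 v)\, dv.
$$
Assigning weight $i + 3j$ to the monomial $u^i v^j$, every term $u^i v^j\, du$ coming from $-Q\, du$ produces a factor $\varepsilon^{i+3j+1}$, while every term $u^i v^j\, dv$ coming from $P\, dv$ produces a factor $\varepsilon^{i+3j+3}$. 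Under the normalization of the previous step, the minimum exponent is $4$: it is attained by the monomials $v\, du$ and $-u^3\, du$ arising from $-Q$, and by $-u\, dv$ arising from $P$. Every other monomial in $P$ and $Q$ allowed in a degree~$3$ foliation has strictly larger weight --- in particular, the off-diagonal term $v\, dv$ appearing in the Jordan case has weight $6$ and is therefore killed in the limit. Hence $\varepsilon^{-4}\,\varphi_\varepsilon^* \omega$ converges to $(v - u^3)\, du - u\, dv$ as $\varepsilon \to 0$, which is precisely the expression of $\omegaoverline_1$ in the chart $y = 1$.

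The coefficients of $\varphi_\varepsilon^* \omega$ are polynomials in $(u,v)$ of degree at most $4$, and a homogeneous polynomial of degree $4$ on $\pp$ is recovered by homogenization from its restriction to $\{y = 1\}$. This affine convergence therefore promotes to the global convergence $\varphi_\varepsilon^* \F \to \F_1$ in $\mathbf{F}(3) \subset \mathbb{P}^{23}$. Since $\varphi_\varepsilon^* \F \in \mathcal{O}(\F)$ for every $\varepsilon \neq 0$ and $\F_1 \notin \mathcal{O}(\F)$, we conclude that $\F$ degenerates into $\F_1$. The principal obstacle I foresee is the joint normalization, particularly the uniform treatment of the semisimple and Jordan cases: the key observation is that the off-diagonal $v$ term of the Jordan linear part has weight $3$, hence lies strictly above the minimum weight and disappears in the limit, so both cases flow to the same foliation~$\F_1$.
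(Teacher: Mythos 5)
Your proof is correct and follows essentially the same route as the paper's: both normalize coordinates so that the tangency-$3$ line and the condition $\mathrm{BB}(\F,m)=4$ pin down the low-weight terms of the defining $1$-form, then apply a weighted homothety with weights $(1,3)$ adapted to that line and identify the weight-$4$ part of the pullback as a model of $\F_1$. The only cosmetic difference is that the paper works in the chart where $m$ is the origin of $z=1$ and recognizes the limit $\alpha(x\,\mathrm{d}y-y\,\mathrm{d}x)+\gamma\,y^{3}\mathrm{d}y$ as conjugate to $\F_1$ a posteriori, whereas your choice of chart yields $\omegaoverline_{1}$ on the nose.
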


\begin{proof}
Assume that $\F$ has a such singular point $m$. The equality  $\kappa(\F,m)=3$ assures the existence of a line $\ell_m$ through $m$ which is not invariant by $\F$ and such that $\Tang(\F,\ell_m,m)=3$. Taking an affine coordinate system $(x,y)$ such that $m=(0,0)$ and $\ell_m=(x=0)$, the foliation $\F$ is defined by a $1$-form $\omega$ of the following type
\begin{align*}
\hspace{1cm}& (*x+\beta y+*x^2+*xy+*y^2+*x^3+*x^2y+*xy^2+*y^3)\mathrm{d}x\\
\hspace{1cm}&\hspace{3mm}+(\alpha\,x+ry+*x^2+*xy+s\hspace{0.1mm}y^2+*x^3+*x^2y+*xy^2+\gamma\,y^3)\mathrm{d}y\\
\hspace{1cm}&\hspace{3mm}+(*x^3+*x^2y+*xy^2+*y^3)(x\mathrm{d}y-y\mathrm{d}x),&&
\text{with}\hspace{1mm} *,r,s,\alpha,\beta,\gamma\in\mathbb{C}.
\end{align*}
Along the line $x=0$ the $2$-form  $\omega\wedge\mathrm{d}x$ writes as
$(ry+s\hspace{0.1mm}y^2+\gamma\,y^3)\mathrm{d}y\wedge\mathrm{d}x$. The equality~$\Tang(\F,\ell_m,m)=3$ is equivalent to $r=s=0$ and $\gamma\neq0.$ The equalities $r=0$, $\mu(\F,m)=1$ and $\mathrm{BB}(\F,m)=4$ imply that $\beta=-\alpha\neq0.$ Thus $\omega$ writes as
\begin{align*}
\hspace{1cm}& (*x-\alpha y+*x^2+*xy+*y^2+*x^3+*x^2y+*xy^2+*y^3)\mathrm{d}x\\
\hspace{1cm}&\hspace{3mm}+(\alpha\,x+*x^2+*xy+*x^3+*x^2y+*xy^2+\gamma\,y^3)\mathrm{d}y\\
\hspace{1cm}&\hspace{3mm}+(*x^3+*x^2y+*xy^2+*y^3)(x\mathrm{d}y-y\mathrm{d}x),&&
\text{where } *\in\mathbb{C},\hspace{1mm}\alpha,\gamma\in\mathbb{C}^*.
\end{align*}
Put $\varphi=\left(\varepsilon^3\hspace{0.1mm}x,\varepsilon\hspace{0.1mm}y\right)$ and fix $(i,j)\in\mathbb{Z}^2_+\setminus\{(0,0)\}$. Notice that
\begin{itemize}
\item[\texttt{1. }] $\varphi^*(x^iy^j\mathrm{d}x)=\varepsilon^{3i+j+3}x^iy^j\mathrm{d}x$ is divisible by $\varepsilon^4$ and
                    $\frac{1}{\varepsilon^4}\varphi^*(x^iy^j\mathrm{d}x)$ tends to $0$ as $\varepsilon$ tends to $0$  except for
                    $(i,j)=(0,1)$;

\item[\texttt{2. }] $\varphi^*(x^iy^j\mathrm{d}y)=\varepsilon^{3i+j+1}x^iy^j\mathrm{d}y$ is divisible by $\varepsilon^4$ except for $(i,j)=(0,1)$ and
                    $(i,j)=(0,2).$ If $(i,j)\notin\{(0,1),(0,2),(0,3),(1,0)\},$ then the $1$-form $\frac{1}{\varepsilon^4}\varphi^*(x^iy^j\mathrm{d}y)$ tends to $0$ as $\varepsilon$ goes to $0.$
\end{itemize}
\noindent Therefore
\begin{align*}
\lim_{\varepsilon\to 0}\frac{1}{\varepsilon^4}\varphi^*\omega=\alpha(x\mathrm{d}y-y\mathrm{d}x)+\gamma\,y^3\mathrm{d}y.
\end{align*}
The foliation defined by $\alpha(x\mathrm{d}y-y\mathrm{d}x)+\gamma\,y^3\mathrm{d}y$ is conjugated to $\F_1$ because, as a straightforward computation shows, it is a convex foliation whose singular locus is formed of two points. As a result $\F$ degenerates into~$\F_1$.
\end{proof}

\begin{cor}\label{cor:H1-H3-H5-H7-F4}
{\sl
The foliations  $\mathcal{H}_{\hspace{0.2mm}1},\mathcal{H}_{\hspace{0.2mm}3},\mathcal{H}_{\hspace{0.2mm}5},\mathcal{H}_{\hspace{0.2mm}7}$ and $\F_{4}$ degenerate into $\F_1.$
}
\end{cor}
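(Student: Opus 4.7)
The plan is to deduce the corollary from Proposition~\ref{pro:cond-suff-dgnr-F1} by verifying, for each $\F \in \{\mathcal{H}_{\hspace{0.2mm}1},\mathcal{H}_{\hspace{0.2mm}3},\mathcal{H}_{\hspace{0.2mm}5},\mathcal{H}_{\hspace{0.2mm}7},\F_4\}$, its two hypotheses: (a) $\F_1 \notin \mathcal{O}(\F)$, and (b) $\F$ possesses a non-degenerate singular point $m$ with $\mathrm{BB}(\F,m)=4$ and $\kappa(\F,m)=3$. Condition (a) is immediate from Proposition~\ref{pro:isotropies}, since $\dim\mathcal{O}(\F_1)=6$ while $\dim\mathcal{O}(\mathcal{H}_{\hspace{0.2mm}i})=\dim\mathcal{O}(\F_4)=7$ for the five foliations at hand. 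All the work is therefore in (b).

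Since each of $\mathcal{H}_{\hspace{0.2mm}1},\mathcal{H}_{\hspace{0.2mm}3},\mathcal{H}_{\hspace{0.2mm}5},\mathcal{H}_{\hspace{0.2mm}7}$ has a degenerate radial-type singularity at the origin of the affine chart $z=1$, and $\F_4$ has a singularity of algebraic multiplicity $3$ at $[0:0:1]$, the candidate points $m$ must lie at infinity. The first step is to homogenize $\omega_i$ (resp.\ $\omegaoverline_4$) into a $1$-form $\Omega$ on $\mathbb{C}^3$ of degree $4$ satisfying $i_{\mathrm{R}}\Omega=0$, intersect $\Sing\F$ with the line $z=0$, and tabulate the candidate singular points. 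At each such $m$ we pass to an affine chart centered at $m$ and carry out three local verifications: (i) $\mu(\F,m)=1$ by inspecting the linear part; (ii) the two eigenvalues of the linear part coincide, which by the very definition $\mathrm{BB}(\F,m)=\frac{\lambda}{\mu}+\frac{\mu}{\lambda}+2$ is equivalent to $\mathrm{BB}(\F,m)=4$; (iii) there exists a non-invariant line $\ell_m$ with $\Tang(\F,\ell_m,m)=3$, exhibited as the level set $\{f=0\}$ of a linear form such that $\dim_{\mathbb{C}}\mathbb{C}\{\mathrm{u},\mathrm{v}\}/\langle f,\mathrm{X}(f)\rangle=3$.

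As a prototype, for $\mathcal{H}_1$ the lifted form $\Omega_1=y^3z\,\mathrm{d}x-x^3z\,\mathrm{d}y+xy(x^2-y^2)\,\mathrm{d}z$ has a singularity at $m=[1:0:0]$; in the chart $x=1$, the defining vector field is $\mathrm{X}=y(1-y^2)\partial_y+z\partial_z$, whose linear part is radial ($\lambda=\mu=1$), yielding $\mu(\F,m)=1$ and $\mathrm{BB}(\F,m)=4$; moreover for $f=z-\lambda y$ with $\lambda\neq 0$ one finds $\mathrm{X}(f)\equiv\lambda y^3\pmod{f}$, whence $\Tang(\F,\ell_m,m)=3$. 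Analogous computations will be performed for $\mathcal{H}_{\hspace{0.2mm}3},\mathcal{H}_{\hspace{0.2mm}5},\mathcal{H}_{\hspace{0.2mm}7}$ at a suitable radial fixed point of the $\mathrm{Iso}(\mathcal{H}_{\hspace{0.2mm}i})$-action listed in Proposition~\ref{pro:isotropies} (using also the Baum--Bott data tabulated in~\cite[Table~1]{BM17}), and for $\F_4$ at $m=[0:1:0]$, where the chart $y=1$ gives linear part $x\,\partial_x+z\,\partial_z$ and the pencil of lines $\{z=\lambda x\}$ realizes $\Tang=3$.

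The main obstacle is not conceptual but bookkeeping: one must correctly enumerate the singular locus at infinity for each foliation and, for each candidate singularity, identify by inspection a non-invariant line achieving maximal tangency. Once the good singularity $m$ is located, the verification $\mathrm{BB}(\F,m)=4$ and $\kappa(\F,m)=3$ is a short local computation, and Proposition~\ref{pro:cond-suff-dgnr-F1} closes the argument.
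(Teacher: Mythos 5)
Your proposal is correct and is essentially the paper's (implicit) argument: the corollary is stated as a direct consequence of Proposition~\ref{pro:cond-suff-dgnr-F1}, the intended verification being exactly the one you outline, namely exhibiting for each of the five foliations a radial singularity of order two on the line at infinity (resp.\ at $[0:1:0]$ for $\F_4$), which gives $\mu=1$, $\mathrm{BB}=4$ and $\kappa=3$, while $\F_1\notin\mathcal{O}(\F)$ follows from the orbit dimensions in Proposition~\ref{pro:isotropies}. Your prototype computation for $\mathcal{H}_1$ at $[1:0:0]$ and the one for $\F_4$ at $[0:1:0]$ check out; the only caution is that for $\mathcal{H}_{\hspace{0.2mm}3}$ the point $[1:0:0]$ has $\mathrm{BB}=4$ but only $\kappa=2$, so the "suitable" point there is $[1:-1:0]$, as your bookkeeping caveat anticipates.
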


\noindent The converse of Proposition~\ref{pro:cond-suff-dgnr-F1} is false as the following example shows.
\begin{eg}\label{eg:sans-singularité-kappa=3}
Let $\F$ be the degree $3$ foliation on $\pp$ defined in the affine chart $z=1$ by
$$\omega=x\mathrm{d}y-y\mathrm{d}x+(y^2+y^3)\mathrm{d}y.$$
The singular locus of $\F$ consists of the two points  $m=[0:0:1]$ and $m'=[1:0:0]$; moreover
\begin{align*}
&\mu(\F,m)=1,&& \mathrm{BB}(\F,m)=4,&&\kappa(\F,m)=2,&& \mu(\F,m')>1.
\end{align*}
The foliation $\F$ degenerates into $\F_1$; indeed, putting
$\varphi=$\begin{small}$\left(\dfrac{1}{\varepsilon^3}x,\dfrac{1}{\varepsilon}y\right)$\end{small},
we have that
\begin{align*}
&\hspace{1.5cm}\lim_{\varepsilon\to 0}\varepsilon^4\varphi^*\omega=x\mathrm{d}y-y\mathrm{d}x+y^3\mathrm{d}y.
\end{align*}
\end{eg}
\vspace{1mm}

\noindent Next, we give a necessary condition for  a degree three foliation on $\pp$ degenerates into $\F_2$:
\begin{pro}
{\sl Let $\F$ be an element of $\mathbf{F}(3).$ If $\F$ degenerates into  $\F_2$, then $\deg\ItrF\geq2.$
}
\end{pro}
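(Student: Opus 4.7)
The plan is to combine the semi-continuity property of the transverse inflection divisor under degeneration with a direct computation on the specific foliation $\F_2$. By Remark~\ref{rems:orbt-cnvx-plt}(iii), if $\F$ degenerates into $\F_2$, then $\deg\ItrF \geq \deg\mathrm{I}_{\F_2}^{\hspace{0.2mm}\mathrm{tr}}$, so it suffices to establish $\deg\mathrm{I}_{\F_2}^{\hspace{0.2mm}\mathrm{tr}} \geq 2$. This equality (with equality $=2$) is essentially the content of property~(iv) of the remark stating that $\F_j$ admits a double inflection point iff $j\in\{2,4\}$; I would nevertheless include a direct verification.

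Concretely, I would write $\omegaoverline_2 = (x^3 - y^4)\mathrm{d}x + xy^3\mathrm{d}y$ in the affine chart $z = 1$, homogenize to $\Omega = (x^3 z - y^4)\mathrm{d}x + xy^3\mathrm{d}y - x^4\mathrm{d}z$, and exhibit the describing vector field $\mathrm{Z} = -x^3\partial_y - y^3\partial_z$ (one checks that $\Omega = i_{\mathrm{R}}i_{\mathrm{Z}}\mathrm{d}x\wedge\mathrm{d}y\wedge\mathrm{d}z$). Plugging into the determinantal formula \eqref{equa:ext1} gives
\[
\det\begin{pmatrix} x & 0 & 0 \\ y & -x^3 & 0 \\ z & -y^3 & 3x^3 y^2 \end{pmatrix} = -3x^7 y^2,
\]
so $\mathrm{I}_{\F_2} = 7\{x=0\} + 2\{y=0\}$, of total degree $9 = 3\deg\F_2$, as expected.

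I would next decompose into invariant and transverse parts. The line $\{x=0\}$ is $\F_2$-invariant since $\omegaoverline_2\wedge\mathrm{d}x = -xy^3\,\mathrm{d}x\wedge\mathrm{d}y$ is divisible by $x$. The line $\{y=0\}$ is not invariant, as $\omegaoverline_2\wedge\mathrm{d}y = (x^3 - y^4)\mathrm{d}x\wedge\mathrm{d}y$ is not divisible by $y$; similarly, a direct check shows that the line at infinity $\{z=0\}$ is not invariant. By property~3 of the inflection divisor, it follows that $\mathrm{I}_{\F_2}^{\hspace{0.2mm}\mathrm{inv}} = 7\{x=0\}$ and $\mathrm{I}_{\F_2}^{\hspace{0.2mm}\mathrm{tr}} = 2\{y=0\}$, whence $\deg\mathrm{I}_{\F_2}^{\hspace{0.2mm}\mathrm{tr}} = 2$, and Remark~\ref{rems:orbt-cnvx-plt}(iii) concludes the proof.

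I do not anticipate a real obstacle: the argument is a one-line reduction followed by a computation on a single explicit $1$-form. The only care needed is the correct identification of the invariant/transverse decomposition, which reduces to checking three candidate lines (the two coordinate axes and the line at infinity) against the standard invariance criterion $\omega\wedge\mathrm{d}L\in L\cdot\Omega^2$.
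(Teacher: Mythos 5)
Your proof is correct and takes essentially the same route as the paper: the paper's own argument is precisely the reduction via Remark~(iii) followed by the assertion that ``a straightforward computation shows $\mathrm{I}_{\mathcal{F}_2}^{\,\mathrm{tr}}=y^2$'', which is exactly the computation you carry out in detail. The only slip is a sign: with $\mathrm{Z}=-x^3\partial_y-y^3\partial_z$ one gets $i_{\mathrm{R}}i_{\mathrm{Z}}\,\mathrm{d}x\wedge\mathrm{d}y\wedge\mathrm{d}z=-\Omega$, so the correct choice is $\mathrm{Z}=x^3\partial_y+y^3\partial_z$; replacing $\mathrm{Z}$ by $-\mathrm{Z}$ only negates the determinant in \eqref{equa:ext1}, so the inflection divisor and your conclusion are unaffected.
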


\begin{proof}
If $\F$ degenerates into $\F_2$ then $\deg\ItrF\geq\deg\mathrm{I}_{\F_2}^{\hspace{0.2mm}\mathrm{tr}}.$
A straightforward computation shows that  $\mathrm{I}_{\mathcal{F}_{2}}^{\hspace{0.2mm}\mathrm{tr}}=y^2$ so that $\deg\mathrm{I}_{\mathcal{F}_{2}}^{\hspace{0.2mm}\mathrm{tr}}=2$.
\end{proof}

\begin{cor}\label{cor:H5-H9}
{\sl
The foliations $\mathcal{H}_{\hspace{0.2mm}5}$ and $\mathcal{H}_{\hspace{0.2mm}9}$ do not degenerate into $\F_2$.
}
\end{cor}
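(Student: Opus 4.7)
The plan is to apply directly the preceding proposition, which asserts that any foliation $\F \in \mathbf{F}(3)$ degenerating into $\F_2$ must satisfy $\deg \ItrF \geq 2$. Thus it suffices to verify that
\[
\deg \mathrm{I}_{\mathcal{H}_{\hspace{0.2mm}5}}^{\hspace{0.2mm}\mathrm{tr}} < 2
\qquad\text{and}\qquad
\deg \mathrm{I}_{\mathcal{H}_{\hspace{0.2mm}9}}^{\hspace{0.2mm}\mathrm{tr}} < 2,
\]
so that the necessary condition of the proposition fails for $\mathcal{H}_{\hspace{0.2mm}5}$ and $\mathcal{H}_{\hspace{0.2mm}9}$.

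To check these two inequalities I would compute $\IF$ directly via the determinantal formula~(\ref{equa:ext1}), applied to the homogeneous lift of $\omega_5$ and $\omega_9$ respectively. Since $\deg \IF = 9$, one obtains in each case an explicit homogeneous polynomial of degree $9$ which factorises into an invariant part, supported on the invariant lines of $\F$, and a transverse part. For a homogeneous foliation centered at $O=[0:0:1]$ the invariant lines are easy to enumerate a priori: they are the line at infinity $z = 0$ (automatically invariant by homogeneity), together with the lines through $O$ corresponding to the roots of $A(1,\lambda) + \lambda B(1,\lambda)$ (plus the vertical line $x = 0$ when $B(0,1)=0$). Identifying the multiplicity with which each of these invariant lines occurs in the degree-$9$ polynomial produced by~(\ref{equa:ext1}) yields $\deg \IinvF$, and then $\deg \ItrF$ is obtained by subtraction from $9$.

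The main obstacle is purely computational, but it is routine for the explicit $\omega_5$ and $\omega_9$ given in Theorem~\ref{thmalph:classification}; in fact most of this bookkeeping is already carried out in \cite[Table~1]{BM17}, to which one can appeal. Since by Corollary~\ref{coralph:class-convexe-3} neither $\mathcal{H}_{\hspace{0.2mm}5}$ nor $\mathcal{H}_{\hspace{0.2mm}9}$ is convex we know a priori that $\deg \ItrF \geq 1$ in each case, so the expected outcome of the calculation is $\deg \mathrm{I}_{\mathcal{H}_{\hspace{0.2mm}5}}^{\hspace{0.2mm}\mathrm{tr}} = \deg \mathrm{I}_{\mathcal{H}_{\hspace{0.2mm}9}}^{\hspace{0.2mm}\mathrm{tr}} = 1$. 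Combined with the preceding proposition, this immediately excludes the possibility that either foliation degenerates into $\F_2$.
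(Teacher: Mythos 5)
Your proposal is correct and follows exactly the route the paper intends: the corollary is stated as an immediate consequence of the preceding proposition, the only content being the verification (via the inflection divisor computation, already tabulated in \cite[Table~1]{BM17}) that $\deg \mathrm{I}_{\mathcal{H}_{\hspace{0.2mm}5}}^{\hspace{0.2mm}\mathrm{tr}}=\deg \mathrm{I}_{\mathcal{H}_{\hspace{0.2mm}9}}^{\hspace{0.2mm}\mathrm{tr}}=1<2$. Your bookkeeping of the invariant lines (the line at infinity plus the factors of the tangent cone $xA+yB$) is the right way to extract $\deg\ItrF$ from the degree-$9$ divisor, and the check does come out to $1$ for both foliations.
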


\noindent A sufficient condition for that a degree three foliations on $\pp$ degenerates into $\F_2$ is the following:
\begin{pro}\label{pro:cond-suff-dgnr-F2}
{\sl Let $\F$ be an element of $\mathbf{F}(3)$ such that $\F_2\not\in\mathcal{O}(\F).$ If $\F$ possesses a double inflection point, then $\F$ degenerates into $\F_2$.}
\end{pro}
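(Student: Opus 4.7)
The plan is to build a one-parameter family $(\varphi_\varepsilon)_{\varepsilon\in\C^*}\subset\mathrm{Aut}(\pp)$ whose pullbacks $\varphi_\varepsilon^*\F$ converge, as $\varepsilon\to 0$, to $\F_2$. The key observation is that in the affine chart $X=1$ around $[1:0:0]\in\pp$, the foliation $\F_2$ is described by the remarkably simple $1$-form $-Y^3\,dY+dZ$, and $[1:0:0]$ is a double inflection point of $\F_2$ with tangent line $\{Z=0\}$ (compatibly with the computation $\mathrm{I}_{\mathcal{F}_2}^{\hspace{0.2mm}\mathrm{tr}}=y^2$ from the proof of the preceding proposition). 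I will therefore place the double inflection point of $\F$ at this privileged position, and then scale anisotropically so that, at leading order in $\varepsilon$, only the monomials appearing in this model survive.

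Using the transitivity of $\mathrm{PGL}_3(\C)$ on pointed flags, first normalize coordinates so that the double inflection point $m$ of $\F$ is $[1:0:0]$ and the tangent line at $m$ to the leaf of $\F$ through $m$ is $\{Z=0\}$. In the affine chart $X=1$ with coordinates $(Y,Z)$, the point $m$ is the origin and $\F$ is described by $\omega=A(Y,Z)\,dY+B(Y,Z)\,dZ$ with $A,B\in\C[Y,Z]$ of degree at most $4$. The regularity of $\F$ at $m$ with prescribed tangent direction $\{Z=0\}$ forces $A(0,0)=0$ and $B(0,0)\neq 0$; rescaling $\omega$, assume $B(0,0)=1$. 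Writing the leaf through $m$ as $Z=Z(Y)$ and differentiating the relation $A(Y,Z(Y))+B(Y,Z(Y))\,Z'(Y)=0$ iteratively at $Y=0$, one checks, using $B(0,0)=1$, that the contact of order exactly $4$ between the leaf and its tangent $\{Z=0\}$ is equivalent to
\[
A(0,0)=A_Y(0,0)=A_{YY}(0,0)=0,\qquad A_{YYY}(0,0)\neq 0,
\]
that is, in the expansion $A=\sum_{i+j\le 4}a_{ij}Y^iZ^j$, to the vanishings $a_{00}=a_{10}=a_{20}=0$ together with $a_{30}\neq 0$.

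Next, consider $\varphi_\varepsilon:=[X:\varepsilon Y:\varepsilon^4 Z]\in\mathrm{PGL}_3(\C)$, which in the chart $X=1$ is $(Y,Z)\mapsto(\varepsilon Y,\varepsilon^4 Z)$, so that
\[
\varphi_\varepsilon^*\omega=\varepsilon\,A(\varepsilon Y,\varepsilon^4 Z)\,dY+\varepsilon^4\,B(\varepsilon Y,\varepsilon^4 Z)\,dZ.
\]
A monomial $a_{ij}Y^iZ^j$ of $A$ contributes $a_{ij}\,\varepsilon^{1+i+4j}Y^iZ^j\,dY$; combining the bound $i+j\le 4$ with the vanishings $a_{00}=a_{10}=a_{20}=0$, every nonzero term of $A$ satisfies $1+i+4j\ge 4$, with equality only for $(i,j)=(3,0)$. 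Similarly every non-constant term of $B$ contributes $b_{ij}\,\varepsilon^{4+i+4j}Y^iZ^j\,dZ$ with $4+i+4j\ge 5$. Hence
\[
\varepsilon^{-4}\,\varphi_\varepsilon^*\omega=a_{30}\,Y^3\,dY+dZ+O(\varepsilon),
\]
and the limit $a_{30}Y^3\,dY+dZ$ defines, after homogenization, a foliation of $\mathbf{F}(3)$. A further linear automorphism $[X:Y:Z]\mapsto [X:(-a_{30})^{-1/4}Y:Z]$ (which preserves both chosen normalizations) brings it to $-Y^3\,dY+dZ$, whose projective form is $-\Omegaoverline_2$, defining $\F_2$. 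Therefore $\F_2\in\overline{\mathcal{O}(\F)}$, and since $\F_2\notin\mathcal{O}(\F)$ by hypothesis, $\F$ degenerates into $\F_2$.

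The main technical step is the derivation of the coefficient vanishings $a_{00}=a_{10}=a_{20}=0$ from the double inflection condition; once these are in place, the tailored weights $(1,4)$ in the scaling $(Y,Z)\mapsto(\varepsilon Y,\varepsilon^4 Z)$ are engineered so that exactly one monomial of $A$ (namely $a_{30}Y^3$) and the constant term of $B$ survive at order $\varepsilon^4$, yielding the target foliation without extraneous terms.
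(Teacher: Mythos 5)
Your proof is correct and follows essentially the same route as the paper's: normalize the double inflection point and its tangent line, translate the double-inflection hypothesis into the vanishing of the low-order coefficients, and apply a weighted scaling (your $(\varepsilon Y,\varepsilon^{4}Z)$ versus the paper's $(\varepsilon^{4}x,\varepsilon y)$, the same weights in transposed roles) so that only the monomials of a model of $\F_2$ survive in the limit. The only cosmetic differences are the choice of affine chart for the target model and the derivation of the coefficient vanishings via the leaf's graph rather than via $\omega\wedge\mathrm{d}x$ restricted to the tangent line.
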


\begin{proof}
Assume that $\F$ possesses such a point $m$. We take an affine coordinate system $(x,y)$ such that $m=(0,0)$ is a double inflection point of $\F$ and $x=0$ is the tangent line to the leaf of $\F$ passing through $m$. Let $\omega$ be a $1$-form defining $\F$ in these coordinates. Since $\mathrm{T}_{\hspace{-0.4mm}m}\F=(x=0)$, $\omega$ has the following type
\begin{align*}
\hspace{0.5cm}&(\alpha+*x+*y+*x^2+*xy+*y^2+*x^3+*x^2y+*xy^2+*y^3)\mathrm{d}x\\
\hspace{0.5cm}&\hspace{3mm}+(*x+ry+*x^2+*xy+s\hspace{0.1mm}y^2+*x^3+*x^2y+*xy^2+\beta y^3)\mathrm{d}y\\
\hspace{0.5cm}&\hspace{3mm}+(*x^3+*x^2y+*xy^2+*y^3)(x\mathrm{d}y-y\mathrm{d}x),&&
\text{with}\hspace{1mm} *,r,s,\beta,\in\mathbb{C},\hspace{1mm}\alpha\in\mathbb{C}^*.
\end{align*}
Along the line $x=0$, the $2$-form $\omega\wedge\mathrm{d}x$ writes as $(ry+s\hspace{0.1mm}y^2+\beta\,y^3)\mathrm{d}y\wedge\mathrm{d}x$. The fact that $(0,0)$ is a double inflection point is equivalent to $r=s=0$ and $\beta\neq0.$ Thus $\omega$ writes as
\begin{align*}
\hspace{0.5cm}&(\alpha+*x+*y+*x^2+*xy+*y^2+*x^3+*x^2y+*xy^2+*y^3)\mathrm{d}x\\
\hspace{0.5cm}&\hspace{3mm}+(*x+*x^2+*xy+*x^3+*x^2y+*xy^2+\beta y^3)\mathrm{d}y\\
\hspace{0.5cm}&\hspace{3mm}+(*x^3+*x^2y+*xy^2+*y^3)(x\mathrm{d}y-y\mathrm{d}x),&&
\text{where }*\in\mathbb{C},\hspace{1mm}\alpha,\beta\in\mathbb{C}^*.
\end{align*}
We consider the following family of automorphisms $\varphi_\varepsilon=\varphi=(\varepsilon^4x,\varepsilon y).$ Fix $(i,j)\in\Z^2_+$ and notice that
\begin{itemize}
\item[\texttt{1. }] $\varphi^*(x^iy^j\mathrm{d}x)=\varepsilon^{4i+j+4}x^iy^j\mathrm{d}x$ is divisible by $\varepsilon^4$ and
                    $\frac{1}{\varepsilon^4}\varphi^*(x^iy^j\mathrm{d}x)$ tends to $0$ as $\varepsilon$ tends to $0$ except for  $i=j=0$;

\item[\texttt{2. }] $\varphi^*(x^iy^j\mathrm{d}y)=\varepsilon^{4i+j+1}x^iy^j\mathrm{d}y$ is divisible by $\varepsilon^4$ except for
                    $(i,j)\in\{(0,0),(0,1),(0,2)\}.$\\ If $(i,j)\notin\{(0,0),(0,1),(0,2),(0,3)\},$ then the $1$-form $\frac{1}{\varepsilon^4}\varphi^*(x^iy^j\mathrm{d}y)$ tends to $0$ as $\varepsilon$ goes to $0.$
\end{itemize}
We obtain that
\begin{align*}
\lim_{\varepsilon\to 0}\frac{1}{\varepsilon^4}\varphi^*\omega=\alpha\mathrm{d}x+\beta y^{3}\mathrm{d}y.
\end{align*}
Clearly $\alpha\mathrm{d}x+\beta y^{3}\mathrm{d}y$ defines a foliation which is conjugated to $\mathcal{F}_2$; as a result $\F$ degenerates into $\F_2.$
\end{proof}

\begin{cor}\label{cor:H2-H4-H6-H8-F4}
{\sl
The foliations $\mathcal{H}_{\hspace{0.2mm}2},\mathcal{H}_{\hspace{0.2mm}4},\mathcal{H}_{\hspace{0.2mm}6},\mathcal{H}_{\hspace{0.2mm}8}$ and $\F_{4}$ degenerate into $\F_2.$
}
\end{cor}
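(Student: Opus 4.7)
The strategy is to invoke Proposition~\ref{pro:cond-suff-dgnr-F2} for each of the five foliations $\mathcal{H}_{\hspace{0.2mm}2}, \mathcal{H}_{\hspace{0.2mm}4}, \mathcal{H}_{\hspace{0.2mm}6}, \mathcal{H}_{\hspace{0.2mm}8}, \F_4$. Its two hypotheses are (a) the foliation does not lie in $\mathcal{O}(\F_2)$, and (b) it admits a double inflection point.

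Hypothesis (a) I would dispose of by a dimension count. By Proposition~\ref{pro:isotropies} we have $\dim\mathcal{O}(\F_2)=6$, while $\dim\mathcal{O}(\mathcal{H}_{\hspace{0.2mm}i})=7$ for every $i=1,\ldots,11$ and $\dim\mathcal{O}(\F_4)=7$; hence none of the five orbits can coincide with $\mathcal{O}(\F_2)$.

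For hypothesis (b), the case of $\F_4$ is already recorded as property~(iv) of the first remark following the proof of Proposition~\ref{pro:overline-omega1-omega4-omega5}. For each of the four homogeneous foliations $\mathcal{H}_{\hspace{0.2mm}i}$ with $i\in\{2,4,6,8\}$, I would compute the inflection divisor $\mathrm{I}_{\mathcal{H}_i}$ by applying the determinant formula~(\ref{equa:ext1}) to the explicit $1$-form $\omega_i$ listed in Theorem~\ref{thmalph:classification}. For a homogeneous foliation of degree three, the line at infinity $z=0$ is always invariant, and the remaining invariant lines passing through the origin are precisely the irreducible components of the tangent cone of $\omega_i$; factoring these off from $\mathrm{I}_{\mathcal{H}_i}$ isolates $\mathrm{I}^{\hspace{0.2mm}\mathrm{tr}}_{\mathcal{H}_i}$, and I would exhibit in each case a regular point of $\mathcal{H}_{\hspace{0.2mm}i}$ at which $\mathrm{I}^{\hspace{0.2mm}\mathrm{tr}}_{\mathcal{H}_i}$ has local multiplicity at least~$2$.

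The main difficulty is organisational rather than conceptual: four essentially elementary symbolic computations must be carried out and their outcomes displayed compactly. In each case the factorisation is transparent. For instance, for $\mathcal{H}_{\hspace{0.2mm}2}$ defined by $\omega_2=x^3\mathrm{d}x-y^3\mathrm{d}y$, a direct computation based on the vector field $\mathrm{Z}=y^3\frac{\partial}{\partial x}+x^3\frac{\partial}{\partial y}$ yields $\mathrm{I}_{\mathcal{H}_2}\propto z\,x^2\,y^2\,(y^4-x^4)$, so that $\mathrm{I}^{\hspace{0.2mm}\mathrm{tr}}_{\mathcal{H}_2}=x^2y^2$ and every point of $\{xy=0\}\smallsetminus\Sing\mathcal{H}_{\hspace{0.2mm}2}$ is a double inflection point. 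Once analogous exhibitions are completed for $i=4,6,8$, the five degenerations follow immediately from Proposition~\ref{pro:cond-suff-dgnr-F2}.
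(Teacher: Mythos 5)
Your proposal is correct and follows exactly the route the paper intends: the corollary is stated as an immediate consequence of Proposition~\ref{pro:cond-suff-dgnr-F2}, whose hypotheses are verified just as you do (orbit dimensions from Proposition~\ref{pro:isotropies} rule out membership in $\mathcal{O}(\F_2)$, property~(iv) of the remark handles $\F_4$, and the inflection divisors of $\mathcal{H}_{\hspace{0.2mm}2},\mathcal{H}_{\hspace{0.2mm}4},\mathcal{H}_{\hspace{0.2mm}6},\mathcal{H}_{\hspace{0.2mm}8}$ each contain a non-invariant line with multiplicity two, whose regular points are double inflection points since $\Tang(\F,\ell,m)\leq 3$ in degree three). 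Your sample computation $\mathrm{I}_{\mathcal{H}_2}\propto z\,x^2y^2(y^4-x^4)$ is correct.
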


\begin{eg}[\textsc{Jouanolou}]
Consider the degree three foliation $\F_J$ on $\pp$ defined in the affine chart $z=1$~by $$\omega_J=(x^3y-1)\mathrm{d}x+(y^3-x^4)\mathrm{d}y;$$
this example is due to \textsc{Jouanolou} (\cite{Jou79}). Historically it is the first explicit example of foliation without invariant algebraic curves  (\cite{Jou79}); it is also a foliation without non-trivial minimal set (\cite{CdF01}).
The point $m=(0,0)$ is a double inflection point of  $\F_J$ because $\mathrm{T}_{\hspace{-0.4mm}m}\F_J=(x=0)\hspace{2mm} \text{and} \hspace{2mm}\omega_J\wedge\mathrm{d}x\Big|_{x=0}=y^3\mathrm{d}y\wedge\mathrm{d}x$; thus $\F_J$ degenerates into~$\F_2.$
\end{eg}

\noindent The converse of Proposition~\ref{pro:cond-suff-dgnr-F2} is false as the following example shows.
\begin{eg}\label{eg:sans-inflex-double}
Let $\F$ be the degree $3$ foliation on $\pp$ defined in the affine chart $z=1$ by
$$\omega=\mathrm{d}x+(y^2+y^3)\mathrm{d}y.$$
A straightforward computation shows that $\F$ has no double inflection point. This foliation degenerates into $\F_3$ in the following way. Putting $\varphi=$\begin{small}$\left(\dfrac{1}{\varepsilon^4}x,\dfrac{1}{\varepsilon}y\right)$\end{small}, we obtain that
\begin{align*}
&\hspace{1.5cm}\lim_{\varepsilon\to 0}\varepsilon^4\varphi^*\omega=\mathrm{d}x+y^3\mathrm{d}y.
\end{align*}
\end{eg}
\vspace{2mm}

\noindent
Theorem~\ref{thmalph:12-Composantes irréductibles} follows directly from Theorem~\ref{thmalph:classification}, Propositions~\ref{pro:isotropies}, \ref{pro:adh-F1-F2-F3} and Corollaries~\ref{coralph:class-convexe-3}, \ref{cor:dim-o(F)=<7}, \ref{cor:H2-H8-H11-F5}, \ref{cor:H1-H3-H5-H7-F4}, \ref{cor:H5-H9}, \ref{cor:H2-H4-H6-H8-F4}.
\smallskip

\begin{prob}\label{prob:dgnr-F1-F2}
{\sl
Give a criterion for deciding whether or not a degree three foliation on $\pp$ degenerates into $\F_1$ or $\F_2$.
}
\end{prob}

\noindent Thanks to Corollary~\ref{cor:dim-o(F)=<7}, an affirmative answer to this problem would allows us to decide whether or not an orbit of dimension $7$ in $\mathbf{F}(3)$ is closed.

%
%
%


\end{document}